\documentclass[12pt]{amsart}
\usepackage[margin=1in]{geometry}
\usepackage{amsmath, amssymb, amsthm,hyperref,tikz-cd}
\usepackage{mathtools,xcolor}
\usepackage{enumitem}
\usepackage{multicol}

\usepackage{soul}

\newtheorem{theorem}{Theorem}[section]
\newtheorem{lemma}[theorem]{Lemma}
\newtheorem{proposition}[theorem]{Proposition}
\newtheorem{corollary}[theorem]{Corollary}
\theoremstyle{definition}
\newtheorem{definition}[theorem]{Definition}
\newtheorem{example}[theorem]{Example}

\newtheorem{conjecture}[theorem]{Conjecture}
\newtheorem{remark}[theorem]{Remark}

\newcommand{\kk}{\bold k}

\newcommand{\g}{\mathfrak{g}}
\newcommand{\n}{\mathfrak{n}}

\newcommand{\norm}[1]{\left\lVert#1\right\rVert}

\numberwithin{equation}{section} 
\usepackage{color}

\begin{document}

\title[Periodic and quasi-motivic pencils of flat connections]{Periodic and quasi-motivic pencils of flat connections}

\author{Pavel Etingof}

\address{Department of Mathematics, MIT, Cambridge, MA 02139, USA}

\author{Alexander Varchenko}

\address{Department of Mathematics, University of North Carolina at Chapel Hill, 
CB\# 3250 Phillips Hall
Chapel Hill, N.C. 27599, USA} 

\maketitle

\centerline{\bf To the memory of Igor Krichever} 

\begin{abstract} We introduce a new notion of a {\bf periodic pencil of flat connections} on a smooth algebraic variety $X$. This is a family $\nabla(s_1,...,s_n)$ of flat connections on a trivial vector bundle on $X$ depending linearly on parameters $s_1,...,s_n$ and generically invariant, up to isomorphism, under the shifts $s_i\mapsto s_i+1$ for all $i$. If  
in addition $\nabla$ has regular singularities, we call it a {\bf quasi-motivic pencil}. We 
use tools from complex analysis to establish various remarkable properties of such pencils over $\Bbb C$.  For example, we show that the monodromy of a quasi-motivic pencil is defined over the field of algebraic functions in $e^{2\pi is_j}$, and that its singularities are 
 constrained to an arrangement of hyperplanes with integer normal vectors. Then we show that many important examples of families of flat connections, such as Knizhnik-Zamolodchikov, Dunkl, and Casimir connections, are quasi-motivic and thus periodic pencils. 
 
Besides being interesting in its own right, the periodic property of a pencil of flat connections turns out to be very useful in computing the eigenvalues of the $p$-curvature of its reduction to positive characteristic. This is done in the follow-up paper \cite{EV1}. 
\end{abstract} 

\tableofcontents

\section{Introduction} 

In this paper we introduce a new notion of a {\bf periodic pencil of flat connections}. 
Namely, a {\bf pencil  of flat connections} is a family 
$$
\nabla(\bold s)=d-s_1B_1-...-s_nB_n
$$
of flat connections on the trivial rank $N$ vector bundle 
over a smooth irreducible variety $X$ over a field $\kk$, where $B_j\in \Omega^1(X)\otimes {\rm Mat}_N(\kk)$ for $1\le j\le n$. 
Such a pencil is said to be {\bf periodic}\footnote{In fact, we introduce this definition in a larger generality of {\bf families of flat connections}, where the dependence on $s_j$ is allowed to be polynomial, but our main results only apply to pencils, where this dependence is linear.} if it admits {\bf shift operators} 
$$
A_j\in GL_N(\kk(\bold s)[X])
$$ 
such that 
$$
\nabla(\bold s+\bold e_j)\circ A_j(\bold s)=A_j(\bold s)\circ \nabla(\bold s),\ 1\le j\le n.
$$
There are many interesting examples of periodic pencils, falling into three large classes: Knizhnik-Zamolodchikov (KZ) connections, Dunkl connections, and Casimir connections. Another important source of examples (often belonging to one of these classes) is equivariant quantum connections of conical symplectic resolutions of singularities with finitely many torus fixed points.

At the same time, periodic pencils enjoy many remarkable properties, especially when they have regular singularities. 
We call periodic pencils with regular singularities  {\bf quasi-motivic}. For example, we show that quasi-motivic pencils give rise to local systems defined over $\overline{\Bbb Q}$ in both de Rham and Betti realizations, and their singularities can only occur on an arrangement of hyperplanes. 
   
  In a follow-up paper \cite{EV1}, we describe the spectrum of the $p$-curvature of a periodic pencil in characteristic $p$. When applied to equivariant quantum connections for conical resolutions with finitely many torus fixed points, this has applications to symplectic geometry, since it has been shown in \cite{Lee} that the $p$-curvature of such a connection coincides with the equivariant version of Fukaya's quantum Steenrod operation (at least up to a nilpotent correction that does not affect the spectrum).

The organization of the paper is as follows. In Section 2 we discuss preliminaries and auxiliary results. In Section 3 we discuss properties of periodic pencils of flat connections 
and of {\bf pencils with periodic monodromy}, which is a weaker but more readily verifiable condition that makes sense over $\Bbb C$. This condition requires that the monodromy 
representation of $\nabla(\bold s)$ up to isomorphism generically depend only on the exponentials $q_j:=e^{2\pi is_j}$. The main result in this section is Theorem \ref{permoncrit}, which states that the condition of periodic monodromy is equivalent to the 
monodromy of $\nabla(\bold s)$ being defined over a finite Galois extension of the field 
$\Bbb C(q_1,...,q_n)$ and stable under the Galois group of this extension. 
We also explain that equivariant quantum connections of conical symplectic resolutions 
with finitely many torus fixed points give rise to periodic pencils. 

In Section 4 we introduce the notion of a {\bf quasi-motivic pencil}, which 
is a pencil with periodic monodromy and regular singularities. We show in Theorem \ref{qmper} that such a pencil is necessarily periodic. Then we proceed to study 
properties of quasi-motivic pencils, showing that their singularities in $\bold s$ can only 
occur on translates of hyperplanes in $\Bbb C^n$ defined over $\Bbb Q$. 
We also discuss {\bf motivic} pencils, which are a large special class 
of quasi-motivic pencils arising as Gauss-Manin connections.
Finally, we discuss the notion of a {\bf quasi-geometric} local system, which is a local system defined over $\overline{\Bbb Q}$ both in the de Rham and the Betti realization. It is a generalization of the notion of a geometric local system due to Deligne (\cite{De}). 
We explain how quasi-motivic pencils give rise to quasi-geometric local systems, 
and then make a conjecture that local systems arising from braided fusion categories 
are quasi-geometric, discussing some evidence in support of this conjecture. 

Finally, in Section 5 we review numerous examples of quasi-motivic (hence periodic) pencils, and discuss their irreducibility, unitarity and generic semisimplicity.  
 
{\bf Acknowledgements.} This paper belongs to the area of quantum integrable systems which took shape in the work 
of our friend and colleague Igor Krichever (1950-2022). 
Igor influenced us in many ways throughout our lives, 
and we dedicate this paper to his memory.
 
We thank Jae Hee Lee and Vadim Vologodsky for useful discussions. P. E.'s work was partially supported by the NSF grant DMS-2001318
and  A. V.'s work was partially supported by the NSF grant DMS-1954266. 

\section{Preliminaries} 

\subsection{Galois-stability} \label{galstab} 
Let $K$ be a field of characteristic $0$. Let $G$ be an affine algebraic group and $Y$ an affine $G$-variety, both defined over $K$. 

\begin{definition} 
A point $y\in Y(\overline{K})$ is said to be {\bf Galois-stable} up to $G$-action if for any $\gamma\in {\rm Gal}(\overline K/K)$ there exists $g\in G(\overline K)$ such that $\gamma(y)=gy$. 
\end{definition}

It is clear that whether or not a point $y$ is Galois-stable up to $G$-action is determined by its $G(\overline K)$-orbit, and that 
if $gy\in Y(K)$ for some $g\in G(\overline K)$ then $y$ is  Galois-stable. The converse is false, however: 
e.g., take $K=\Bbb R$, $Y\subset \Bbb A^1$ 
defined by the equation $x^2=-1$ (so $Y(K)=\emptyset$), $G=\lbrace1,-1\rbrace$ acting on $x$ by multiplication, and $y=i$. So in general being Galois-stable up to $G$-action is a strictly weaker condition than being conjugate to a point defined over $K$. 

\begin{example}\label{holfam1} Let $A$ be a finitely generated 
$K$-algebra, $N$ a positive integer, $Y$ the variety of $N$-dimensional matrix representations of $A$, and $G=PGL_N$ acting on $Y$ by conjugation. Then a representation $\eta: A\to {\rm Mat}_N(\overline K)$ is Galois-stable up to $G$-action if and only if for any $\gamma\in {\rm Gal}(\overline K/K)$, the representations $\eta$ and $\gamma(\eta)$ are isomorphic. 
In particular, this notion applies to representations of finitely generated groups, by taking $A$ to be a group algebra. 
For brevity, from now on we will speak about Galois-stable representations, dropping the words ``up to $G$-action".
\end{example} 

Now let $G,Y$ be defined over $\Bbb C$ and let $K:=\Bbb C(\bold q)$, where $\bold q:=(q_1,...,q_n)$ is a collection of variables. Then for every $y\in Y(\overline{K})$ 
there exists a nonempty affine open subset $U\subset \Bbb C^n$ 
and a finite unramified Galois cover $p: \widetilde U\to U$ 
such that $y\in Y(\Bbb C[\widetilde U])$. 
Thus, given $\bold a:=(a_1,...,a_n)\in U$ and $\widetilde{\bold a}\in p^{-1}(\bold a)$,
we can define the specialization $y|_{\widetilde{\bold a}}\in 
Y(\Bbb C)$ of $y$ at $\widetilde{\bold a}$. However, if 
$\eta$ is Galois-stable up to $G$-action then the $G(\Bbb C)$-orbit of $y|_{\widetilde{\bold a}}$ depends only on $\bold a$. So when we only care about the $G(\Bbb C)$-orbit of this specialization, we will denote it by $y|_{\bold a}$. 

\subsection{Exponentially bounded holomorphic functions} 

Let $V$ be a finite dimensional complex vector space, 
and $f: \Bbb C^n\to V$ a holomorphic function. 
We say that $f$ is {\bf exponentially bounded} if there exists $C>0$ such that 
\begin{equation}\label{normcon} 
\norm{f(\bold s)}=O(e^{C\norm{s}}),\ \bold s\to \infty. 
\end{equation} 
For $\bold b=(b_1,...,b_n)\in \Bbb C^n$ let 
$e^{\bold b}:=(e^{b_1},...,e^{b_n})$.

\begin{lemma}\label{complan} Let $g,h: \Bbb C^n\to \Bbb C$ be exponentially bounded holomorphic functions 
such that $h\ne 0$. Suppose that the meromorphic function $g/h$ is periodic under the lattice $\Bbb Z^n$. Then $g/h$ is a rational function of $e^{2\pi i\bold s}$, $\bold s\in \Bbb C^n$. 
\end{lemma}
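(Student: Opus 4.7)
The plan is first to reduce to $n=1$ and then deploy a contour-integration argument on the descended function on $\Bbb C^*$. For the reduction, I proceed by induction on $n$: fixing $(s_2,\dots,s_n)$ at generic values so that $h(\cdot,s_2,\dots,s_n)\not\equiv 0$, the one-variable result gives rationality of $g/h$ in $q_1=e^{2\pi is_1}$, and the coefficients of the resulting rational expression --- extracted from $g,h$ by Fourier-type contour integrals in $s_1$ --- inherit exponential boundedness in $(s_2,\dots,s_n)$, so the induction continues.

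For $n=1$, set $q=e^{2\pi is}$; the periodicity gives a meromorphic function $F$ on $\Bbb C^*$ with $g(s)/h(s)=F(q)$. The first step is to show that $F$ has only finitely many poles on $\Bbb C^*$. The pole divisor of $g/h$ on $\Bbb C$ is a $\Bbb Z$-invariant discrete subset of the zero set of $h$; by Jensen's formula, the exponentially bounded $h$ has $O(R)$ zeros in $|s|\le R$, while each $\Bbb Z$-orbit of poles with representative in $\{0\le\mathrm{Re}(s)<1,\ |\mathrm{Im}(s)|\le R/2\}$ contributes $\Theta(R)$ points to $|s|\le R$. Comparing the two counts forces the number of orbits --- equivalently, the number of poles of $F$ on $\Bbb C^*$ --- to be bounded uniformly in $R$, hence finite.

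Let $P(q)=\prod_j(q-q_j)^{k_j}$ vanish to the order of each pole, so $G(q):=F(q)P(q)$ is holomorphic on $\Bbb C^*$ with Laurent expansion $G(q)=\sum_{k\in\Bbb Z}a_kq^k$. Compute the coefficients by contour integration,
\[
a_k=\int_0^1 G(e^{2\pi is})\,e^{-2\pi iks}\,d\sigma,\qquad s=\sigma+iT,
\]
valid for any $T\in\Bbb R$. From the identity $G(e^{2\pi is})\,h(s)=g(s)P(e^{2\pi is})$, the right side is exponentially bounded, so $|G(e^{2\pi is})|\le Ce^{C|s|}/|h(s)|$. Provided I can find sequences $T_n\to\pm\infty$ along which $|h(\sigma+iT_n)|\ge e^{-C|T_n|}$ uniformly in $\sigma\in[0,1]$, I obtain $|a_k|\le Ce^{(2\pi k+C')T_n}$; sending $T_n\to-\infty$ (resp.\ $+\infty$) forces $a_k=0$ for $k$ sufficiently large positive (resp.\ large negative), so $G$ is a Laurent polynomial and $F=G/P$ is rational in $q$.

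The main obstacle is producing the sequences $T_n$: a minimum-modulus estimate for the exponential-type entire function $h$. Jensen gives linear density for the imaginary parts of zeros of $h$ in the fundamental strip, so a measure/covering argument selects $T_n$ whose horizontal segment avoids small neighborhoods of all zeros of $h$; on the complement, the Hadamard factorization of $h$ with Weierstrass factors of genus at most one delivers the lower bound $|h(s)|\ge e^{-C|s|}$. Calibrating the exceptional set against the required uniformity along the full segment $\sigma\in[0,1]$ is the technical heart of the argument.
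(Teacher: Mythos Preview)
Your approach can be made to work, but it is more laborious than the paper's at both stages.

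For $n=1$ you apply Jensen only to $h$, conclude that $F$ has finitely many poles on $\Bbb C^\times$, clear them, and then bound the Laurent coefficients of $G=FP$ via a contour shift. This forces you into a minimum-modulus estimate for $h$ along horizontal segments, which you rightly flag as the technical heart. The paper sidesteps this entirely by applying the same Jensen zero-count to $g$ as well as to $h$: then $F$ has finitely many \emph{zeros} as well as finitely many poles on $\Bbb C^\times$. From there the rationality of $F$ follows without any lower bound on $|h|$; for instance, the same Jensen argument applied to $g-ch$ shows that $F-c$ has finitely many zeros for every $c$, and Big Picard then excludes an essential singularity at $0$ or $\infty$, so $F$ extends meromorphically to $\Bbb P^1$. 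What your route would buy is an explicit bound on the Laurent support of $G$ in terms of the exponential type, but that is not needed here.

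For general $n$ the paper is again shorter: the one-variable case shows that the descended meromorphic function on $(\Bbb C^\times)^n$ is rational in each $q_j$ with the others fixed, and the Hurwitz--Weierstrass theorem (a meromorphic function separately rational in each variable is rational) finishes at once. Your inductive scheme --- extract the coefficients of the rational expression in $q_1$ and show they are again ratios of exponentially bounded entire functions of $(s_2,\dots,s_n)$ --- is underspecified: the poles in $q_1$ move with the remaining variables, so your pole-clearing factor $P$ depends on $(s_2,\dots,s_n)$ in a way you have not controlled, and a Fourier integral of $g/h$ against such a factor is not obviously of the required form. This can probably be repaired using uniform degree bounds coming from Jensen, but invoking Hurwitz--Weierstrass makes the issue disappear.
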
 

\begin{proof} We first prove the statement for $n=1$. 
In this case by a well known theorem in complex analysis (\cite{SS}, Chapter 5, Theorem 2.1), the number of zeros of $g$ and $h$ in the disk of radius $R$ is $O(R)$ as $R\to \infty$. 
Thus so is the number of zeros and poles of $g/h$. But $g(s)/h(s)=f(e^{2\pi is})$
where $f$ is a meromorphic function on $\Bbb C^\times$. It follows that 
$f$ has finitely many zeros and poles. Also, since $g$ and $h$ are exponentially bounded, $f$ has at most power growth at zero and infinity. Thus $f$ is rational, as claimed. 

Now we prove the result in general. By the one-dimensional case, 
$f$ is rational in each variable when the other variables are fixed. This implies that $f$ is rational by the Hurwitz-Weierstrass theorem, \cite{BM}, 9.5, Theorem 5 (p.201). 
\end{proof} 

\subsection{Generically periodic holomorphic maps up to $G$-action} 

Let $\bold e_1,...,\bold e_n$ be the standard basis 
of the standard $n$-dimensional space. Let $G$ be an complex affine algebraic group and $V$ a finite dimensional algebraic representation of $G$. 
Let $\rho: \Bbb C^n\to V$ be a holomorphic map.

\begin{definition} We say that $\rho$ is {\bf generically periodic} up to $G$-action if for each $j$ the vectors $\rho(\bold s+\bold e_j)$ and $\rho(\bold s)$ are $G$-conjugate for an analytically Zariski dense set of $\bold s\in \Bbb C^n$. 
\end{definition} 

\begin{theorem}\label{holfam} Let $\rho: \Bbb C^n\to V$ be an exponentially bounded holomorphic map which is generically periodic up to $G$-action. Then there exists 
a nonempty affine open subset $U$ of $(\Bbb C^\times)^n$, a finite unramified Galois cover $p: \widetilde U\to U$, and 
a Galois-stable up to $G$-action regular function $\eta: \widetilde U\to V$ such that $\rho(\bold s)$ is $G(\Bbb C)$-conjugate to $\eta|_{e^{2\pi i\bold s}}$ for $e^{2\pi i\bold s}\in U$. Moreover, $\eta$ is unique up to action of $G(\overline{\Bbb C(\bold q)})$. 
\end{theorem}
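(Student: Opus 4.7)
The plan is to pass to the GIT quotient $V/\!/G$, use Lemma \ref{complan} to descend the resulting map algebraically to $(\Bbb C^\times)^n$, and then lift it back to $V$ after a finite étale Galois cover. First I would apply Lemma \ref{complan} invariant by invariant: for any $P \in \Bbb C[V]^G$, the composition $P \circ \rho : \Bbb C^n \to \Bbb C$ is holomorphic; exponentially bounded (since $P$ is a polynomial and $\rho$ is exponentially bounded); and $\Bbb Z^n$-periodic, because generic periodicity up to $G$-action gives $P(\rho(\bold s+\bold e_j))=P(\rho(\bold s))$ on an analytically Zariski dense set, hence everywhere by analytic continuation. Lemma \ref{complan} with $h=1$ then forces $P \circ \rho$ to be a Laurent polynomial in $e^{2\pi i s_1},\ldots,e^{2\pi i s_n}$. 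Running this over generators of $\Bbb C[V]^G$ produces a morphism $\bar\rho: U \to V/\!/G$ defined over $\Bbb C$ on some nonempty affine open $U \subset (\Bbb C^\times)^n$, agreeing with the image of $\rho$ in $V/\!/G$ under $\bold s \mapsto e^{2\pi i \bold s}$.

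Next I would lift $\bar\rho$ to $V$ on a finite étale Galois cover. For generic $\bold s$ the point $\rho(\bold s)$ lies in the open orbit-type stratum $V^\circ \subset V$ whose stabilizers are $G$-conjugate to a fixed $H \le G$; shrink $U$ so that $\bar\rho(U) \subset V^\circ/\!/G$. Over the generic point, the fiber of $V^\circ \to V^\circ/\!/G$ above $\bar\rho$ is a $G/H$-torsor over $K := \Bbb C(\bold q)$, which trivializes over some finite Galois extension $L/K$, corresponding to a finite étale Galois cover $p: \widetilde U \to U$ (shrinking $U$ again if needed to avoid ramification). A trivialization supplies the desired regular map $\eta: \widetilde U \to V^\circ \subset V$. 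By construction, for $\bold s$ with $\bold q := e^{2\pi i \bold s}\in U$ and $\widetilde{\bold a}\in p^{-1}(\bold q)$, both $\rho(\bold s)$ and $\eta|_{\widetilde{\bold a}}$ lie in $V^\circ$ and share the same image in $V/\!/G$, so they lie in a common $G$-orbit and are $G(\Bbb C)$-conjugate.

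For Galois-stability, given $\gamma\in\mathrm{Gal}(\overline{\Bbb C(\bold q)}/\Bbb C(\bold q))$ the conjugate $\gamma(\eta)$ still lies in $V^\circ$ and still maps to $\bar\rho \in V/\!/G(\Bbb C(\bold q))$, so it is $G(\overline{\Bbb C(\bold q)})$-conjugate to $\eta$. Uniqueness is similar: any two candidates $\eta_1,\eta_2$ specialize to $G(\Bbb C)$-conjugate points over a dense set of $\widetilde{\bold a}$, and ``lying in a common $G$-orbit'' is a constructible condition, so it passes to the generic point, placing $\eta_1,\eta_2$ in the same $G(\overline{\Bbb C(\bold q)})$-orbit. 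I expect the main obstacle to lie in the second step: verifying that the generic orbit type is well-defined and that $V^\circ/\!/G$ really separates the relevant orbits, and that the $G/H$-torsor trivializes on a finite Galois base change. Both are clean when $G$ is reductive with closed generic orbit --- automatic in the applications, e.g.\ $G=PGL_N$ as in Example \ref{holfam1} --- but would need additional care for more general $G$.
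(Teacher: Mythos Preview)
Your strategy --- descend to a $G$-quotient, apply Lemma \ref{complan}, then lift on a finite cover --- matches the paper's in spirit, but the quotient you use is too coarse. The categorical quotient $V/\!/G = \mathrm{Spec}\,\Bbb C[V]^G$ only separates closed orbits, and nothing in the hypotheses guarantees that $G\cdot\rho(\bold s)$ is closed for generic $\bold s$. Already for reductive $G$ this fails: take $G=\Bbb C^\times$ acting on $\Bbb C^2$ by $t\cdot(x,y)=(tx,t^{-1}y)$ and $\rho(s)=(e^{2\pi is},0)$; then $\bar\rho\equiv 0$ in $V/\!/G\cong\Bbb C$, but the fiber over $0$ in $V\setminus\{0\}$ consists of two distinct $G$-orbits, and a lift $\eta$ into the wrong one (say $\eta=(0,1)$) is not $G$-conjugate to $\rho(s)$. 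So your assertion that the fiber of $V^\circ\to V^\circ/\!/G$ is a $G/H$-torsor, and the inference ``share the same image in $V/\!/G$, so they lie in a common $G$-orbit,'' both break down. In the main application ($G=PGL_N$ acting by conjugation on representations of $\pi_1(X)$), closedness of the generic orbit amounts to generic \emph{semisimplicity} of the monodromy, which is not assumed and is in fact treated as a separate issue later in the paper.

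The paper sidesteps GIT by choosing a generic affine subspace $E\subset V$ of codimension equal to the generic orbit dimension $d$, and recording the finite set $\Xi_{\bold s}=G\cdot\rho(\bold s)\cap E$ as a point of the symmetric power $\Sigma^m V$. Since any element of $\Xi_{\bold s}$ lies in the orbit itself (not merely its closure), this separates generic orbits regardless of whether they are closed. Regular functions on $\Sigma^m V$ pull back to meromorphic functions of $\bold s$ expressible as ratios of polynomials in $\rho(\bold s)$, hence with exponentially bounded numerator and denominator; Lemma \ref{complan} then descends them to rational functions of $e^{2\pi i\bold s}$, producing a rational map $(\Bbb C^\times)^n\dashrightarrow\Sigma^m V$, and any of its $m$ branches over the generic point furnishes the Galois-stable $\eta$. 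Your argument can be salvaged along the same lines by replacing polynomial invariants with \emph{rational} $G$-invariants (which separate generic orbits for arbitrary affine $G$ by Rosenlicht's theorem), at the cost of $P\circ\rho$ becoming genuinely meromorphic --- precisely the $g/h$ situation Lemma \ref{complan} is stated for.
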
 

\begin{proof}  Consider the closed subvarieties $Y_i$ of $V$ of vectors $y\in V$ with stabilizer $G_y\subset G$ of codimension $\le i$. Thus $Y_0\subset Y_1\subset...\subset Y_{\dim G}=V$. Let $d$ be the smallest integer such that $\rho: \Bbb C^n\to Y_d$. 

Let $E\subset V$ be a generic affine subspace of codimension $d$ (a {\it gauge fixing subspace}). Then for sufficiently generic $\bold s$ (namely, outside of a proper analytic subset $Z\subset \Bbb C^n$) the set $\Xi_{\bold s}$ of $v\in E$ which are $G$-conjugate to $\rho(\bold s)$ has a finite cardinality $m$. 

So we obtain a meromorphic map $R: \Bbb C^n\to \Sigma^mV$
($m$-th symmetric power of $V$ as a variety) with poles at $Z$ given by $R(\bold s)=\Xi_{\bold s}$; i.e. for each regular function $\Psi$ on $\Sigma^mV$ the function $f_\Psi:=\Psi\circ R: \Bbb C^n\to \Bbb C$ is meromorphic with poles at $Z$. 

Moreover, since $\rho$ is generically periodic up to $G$-action, $R(\bold s+\bold e_j)=R(\bold s)$ and therefore $f_\Psi(\bold s+\bold e_j)=f_\Psi(\bold s)$ for an analytically Zariski dense set of $\bold s$. Thus $f_\Psi(\bold s+\bold e_j)=f_\Psi(\bold s)$ as meromorphic functions on $\Bbb C^n$. So $f_\psi(\bold s)=F_\Psi(e^{2\pi i\bold s})$, where $F_\Psi: (\Bbb C^\times)^n\to \Bbb C$ is a meromorphic function. 

Finally, by construction, $f_\Psi$ can be written as $g/h$, where $g,h: \Bbb C^n\to \Bbb C$ are polynomials of $\rho(\bold s)$, hence they are exponentially bounded. So it follows from Lemma \ref{complan} that $F_\Psi$ is rational. 
Thus $R$ descends to a rational map $\overline R: (\Bbb C^\times)^n\to \Sigma^mV$, which defines a point of 
$Q\in \Sigma^mV(\overline{\Bbb C(\bold q)})$.  
Now any $\eta\in Q$ is a Galois-stable element
of $\overline{\Bbb C(\bold q)}\otimes V$, and 
in fact belongs to $\Bbb C[\widetilde U]\otimes V$ 
for some $U\subset (\Bbb C^\times)^n$ and $p: \widetilde U\to U$, such that  
$\eta$ has the required property for $e^{2\pi i\bold s}\in U$.

It remains to show that $\eta$ is unique up to $G$-action. 
Let $\eta_1,\eta_2$ be two such elements. We may assume that both of them are defined over $\Bbb C[\widetilde U]$ where $U\subset (\Bbb C^\times)^n$ is a nonempty affine open subset and $p: \widetilde U\to U$ is a finite unramified Galois cover. 
Let $\bold a\in U$, $U_{\bold a}$ be a small ball around $\bold a$, and $U_1$ be a connected component of $p^{-1}(U_{\bold a})$. There exists a connected component 
$U_2$ of $p^{-1}(U_{\bold a})$ such that for an analytically dense set of $\bold b \in U_{\bold a}$ , $\eta_1|_{\bold b_1}=\eta_2|_{\bold b_2}$, where $\bold b_i$ are the preimages of $\bold b$ in $U_i$, $i=1,2$. So by Chevalley's elimination of quantifiers, 
there exists $g\in G(\overline{\Bbb C(\bold q)})$ such that 
$g\eta_1=\eta_2$.  
\end{proof} 

\subsection{Algebraic independence of exponentials} 
\begin{theorem}\label{Zil} 
Let 
$E: \Bbb C^n\to (\Bbb C^\times)^n$ be the exponential map 
given by 
$$
E(s_1,...,s_n):=(e^{2\pi is_1},...,e^{2\pi is_n}).
$$ 
Suppose that $Y\subset \Bbb C^n$ is an irreducible algebraic hypersurface 
such that $E(Y)$ is also contained in an algebraic hypersurface. Then $Y$ 
is a translate of a hyperplane defined over $\Bbb Q$. In other words, $E(Y)$ is an affine hypertorus 
in $(\Bbb C^\times)^n$. 
\end{theorem}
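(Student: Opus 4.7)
The plan is to deduce the statement from Ax's theorem on the transcendence of the complex exponential. Let $P\in\mathbb{C}[s_1,\ldots,s_n]$ be an irreducible polynomial cutting out $Y$, and let $Q\in\mathbb{C}[z_1^{\pm 1},\ldots,z_n^{\pm 1}]$ cut out a hypersurface containing $E(Y)$. Let $U\subset Y$ be the smooth locus, a connected complex manifold of dimension $n-1$, and form the holomorphic map
$$\Phi: U\to \mathbb{C}^n\times (\mathbb{C}^\times)^n,\quad u\mapsto \bigl(s_1(u),\ldots,s_n(u),e^{2\pi i s_1(u)},\ldots,e^{2\pi i s_n(u)}\bigr).$$
By construction $\Phi(U)$ lies in $\{P(s)=0\}\times\{Q(z)=0\}\subset\mathbb{C}^n\times(\mathbb{C}^\times)^n$, a subvariety of dimension exactly $2n-2$. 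Hence the transcendence degree over $\mathbb{C}$ of the field generated by the components of $\Phi$, viewed as meromorphic functions on $U$, is at most $2n-2$.

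Ax's theorem provides the opposite bound: if $g_1,\ldots,g_n$ are holomorphic functions on a connected complex manifold $V$ such that no nontrivial $\mathbb{Q}$-linear combination $\sum q_j g_j$ equals a complex constant, then
$$\operatorname{tr.deg}_{\mathbb{C}}\mathbb{C}\bigl(g_1,\ldots,g_n,e^{g_1},\ldots,e^{g_n}\bigr)\ \geq\ n+\operatorname{rk}(dg_1,\ldots,dg_n),$$
where the rank is computed at a generic point. Applied to $g_j=2\pi i\, s_j|_U$, the differentials $ds_j|_U$ are the restrictions of the coordinate differentials on $\mathbb{C}^n$, so their rank equals $\dim U=n-1$ (the embedding $U\hookrightarrow\mathbb{C}^n$ is an immersion). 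Thus if no $\mathbb{Q}$-linear combination of the $s_j|_Y$ were a complex constant, the transcendence degree in question would be at least $n+(n-1)=2n-1$, contradicting the upper bound $2n-2$.

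Consequently some nontrivial $\mathbb{Q}$-linear combination $\sum q_j s_j|_U$ equals a complex constant $c$; by irreducibility this identity propagates to all of $Y$. Clearing denominators, $Y$ is contained in an affine hyperplane $\sum m_j s_j=c'$ with $m_j\in\mathbb{Z}$ not all zero, and since $\dim Y=n-1$ matches the dimension of this hyperplane we conclude $Y$ equals it. This is a translate of the $\mathbb{Q}$-rational hyperplane $\sum m_j s_j=0$, and correspondingly $E(Y)=\{z_1^{m_1}\cdots z_n^{m_n}=e^{2\pi i c'}\}$ is an affine hypertorus.

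The essential non-routine input is Ax's theorem; the remaining steps are a dimension count for the intersection of two hypersurfaces, the immediate rank computation, and the passage from $Y\subset H$ to $Y=H$ by equality of dimensions. The only subtle point is keeping track that Ax's hypothesis is $\mathbb{Q}$-linear independence \emph{modulo complex constants} (not merely modulo $\mathbb{Q}$), since that is exactly what characterizes translates of $\mathbb{Q}$-hyperplanes.
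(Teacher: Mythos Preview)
Your proof is correct and follows essentially the same approach as the paper: both invoke Ax's functional Schanuel theorem to get a lower bound of $2n-1$ on the transcendence degree of $\Bbb C(s_1,\ldots,s_n,e^{2\pi is_1},\ldots,e^{2\pi is_n})$ over $\Bbb C$ under the assumption that no nontrivial $\Bbb Q$-linear combination of the $s_j$ is constant on $Y$, and contradict this with the upper bound $2n-2$ coming from the two hypersurface equations. The paper's version translates $Y$ to contain $0$ and works in a local chart where $s_n=F(s_1,\ldots,s_{n-1})$, whereas you work globally on the smooth locus, but this is a purely cosmetic difference.
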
 

\begin{proof} We may assume without loss of generality that $Y$ contains $0$, is smooth there, and is locally near $0$ parametrized by the coordinates $s_1,...,s_{n-1}$. 
Then $Y$ is defined near $0$ by the equation 
$$
s_n=F(s_1,...,s_{n-1}),
$$ 
where $F$ is an algebraic function. Assume that $Y$ is not a translate of a hyperplane defined over $\Bbb Q$. 
Then $s_1,...,s_n$ are analytic functions of $s_1,...,s_{n-1}$ 
linearly independent over $\Bbb Q$, and the Jacobi matrix $\frac{\partial (s_1,...,s_n)}{\partial (s_1,...,s_{n-1})}$ has rank $n-1$. Thus \cite{Ax}, Corollary 2, p.253 (for the complex field) implies that the transcendence degree of the field $\Bbb C(s_1,...,s_{n},e^{2\pi is_1},...,e^{2\pi is_{n}})$, is at least $2n-1$. So $e^{2\pi is_1},...,e^{2\pi is_{n}}$ must be algebraically independent over $\Bbb C(s_1,...,s_n)$ (as it has transcendence degree $n-1$), hence they are algebraically independent over $\Bbb C$. 
\end{proof} 

\subsection{Geometric local systems} \label{geoloc} In this subsection we briefly review 
Deligne's theory of geometric local systems (which we do not use here, and recall solely for motivational purposes). Let $X$ be a smooth irreducible complex variety. 
The following definition is due to Deligne (see \cite{De}). 

\begin{definition} A Betti $\overline{\Bbb Q}$-local system $\rho$ on $X$ is said to be {\bf geometric} if there exists a non-empty Zariski open subset $X^\circ \subset X$ and 
a smooth proper surjective morphism $\pi: Y\to X^\circ$
such that $\rho|_{X^\circ}$ is a direct summand 
in the monodromy representation of the Gauss-Manin connection on $H^i(\pi^{-1}(\bold x),\overline{\Bbb Q})$, 
$\bold x\in X^\circ$, for some $i$.
\end{definition} 

\begin{remark} 1. If $\nabla$ has finite monodromy group $G$ then it is geometric. 
Indeed, $\nabla$ is a direct summand in a multiple of $\pi_*\nabla_{\rm triv}$, where $\nabla_{\rm triv}$ is the trivial connection on the trivial line bundle on a Galois cover $\pi: \widetilde X\to X$ with Galois group $G$, which implies the statement. 

2. By the Decomposition Theorem, any geometric system $\rho$ is semisimple.

3. The category of geometric systems is invariant under pullbacks, external products (hence tensor products), and duals. Thus for every $X$, we have the semisimple $\overline{\Bbb Q}$-linear Tannakian category $\mathcal G_{\rm B}(X)$ of geometric Betti local systems on $X$, which is a tensor subcategory of the Tannakian category $\mathcal L_{\rm B}(X,\overline{\Bbb Q})$ of all 
Betti $\overline{\Bbb Q}$-local systems on $X$. Similarly, 
the extension of scalars $\mathcal G_{\rm B}(X)_{\Bbb C}$
is a tensor subcategory of the Tannakian category $\mathcal L_{\rm B}(X,\Bbb C)$ of all 
Betti $\Bbb C$-local systems on $X$.

4. The same definition can be used in the de Rham realization (for $\mathcal O$-coherent $D$-modules with regular singularities). Thus for every $X$, we also have the semisimple $\Bbb C$-linear Tannakian category $\mathcal G_{\rm dR}(X)$ of geometric de Rham local systems on $X$, which is a tensor subcategory of the Tannakian category $\mathcal L_{\rm dR}(X)$ of all de Rham local systems on $X$ with regular singularities. Moreover, if $X$ is defined over $\overline{\Bbb Q}$ then we have the full category $\mathcal L_{\rm dR}(X,\overline{\Bbb Q})\subset \mathcal L_{\rm dR}(X)$ of all de Rham local systems on $X$ with regular singularities defined over $\overline{\Bbb Q}$. 

5. The Riemann-Hilbert correspondence (taking monodromy of a regular flat connection)
 sets up a canonical tensor equivalence $\mathcal L_{\rm dR}(X)\cong \mathcal L_{\rm B}(X,{\Bbb C})$ (\cite{De1}), which in general (for $X$ defined over $\overline{\Bbb Q}$) {\bf does not} map the full subcategory $\mathcal L_{\rm dR}(X,\overline{\Bbb Q})$ into $\mathcal L_{\rm B}(X,\overline{\Bbb Q})_{\Bbb C}$. In other words, the monodromy of a holonomic system of linear  differential equations defined over $\overline{\Bbb Q}$ need not be defined over $\overline{\Bbb Q}$. However, the Riemann-Hilbert correspondence {\bf does} restrict to an equivalence between the subcategories $\mathcal G_{\rm dR}(X)$ and $\mathcal G_{\rm B}(X)_{\Bbb C}$, i.e., the monodromy of a direct summand of a Gauss-Manin connection is always defined over $\overline{\Bbb Q}$. In particular, 
every object of $\mathcal G_{\rm dR}(X)$ can be found in the de Rham cohomology of 
$\pi^{-1}(\bold x)$ where $\pi: Y\to X^\circ$ is defined over $\overline{\Bbb Q}$, hence 
is itself defined over $\overline{\Bbb Q}$. This striking property follows from the fact that monodromy of a geometric de Rham local system can be computed in the Betti realization by following the deformation of twisted cycles in $Y_{\bold x}$ as $\bold x$ varies along $X^\circ$. 
 \end{remark} 

\section{Periodic families and pencils of flat connections}

\subsection{Polynomial families and pencils of flat connections} 

Let $\kk$ be an algebraically closed field, $X$ a smooth irreducible algebraic variety over $\kk$, and $V$ a finite dimensional $\kk$-vector space. 

\begin{definition} An {\bf $n$-parameter (polynomial)
family of flat connections} on $X$ with values in $V$  
is a family of flat connections 
$$
\nabla(\bold s)=d-B(\bold s),
$$ 
$\bold s:=(s_1,...,s_n)$, on the trivial vector bundle $X\times V\to X$, where $B\in \Omega^1(X)\otimes {\rm End}V[\bold s]$
(i.e., $dB-[B,B]=0$, where $d$ is the de Rham differential on $X$ and $[,]$ is the supercommutator in the graded algebra $\Omega^\bullet(X)\otimes {\rm End} V$). Such a family is called a {\bf pencil} if $B$ is linear homogeneous in $\bold s$, i.e. $B=\sum_{j=1}^n s_jB_j$, $B_j\in \Omega^1(X)\otimes {\rm End}V$. 
\end{definition} 

Thus a pencil of flat connections is determined by a collection $$
B_j=\sum_{i=1}^rB_{ij}dx_i\in \Omega^1(X)\otimes {\rm End} V,\ 1\le j\le n
$$   
of 1-forms  
on $X$ with values in ${\rm End} V$
such that 
\begin{equation}\label{commuta} 
dB_j=[B_j,B_k]=0,\ 1\le j,k\le n. 
\end{equation} 

Of course, if $\dim X=1$ then the flatness conditions \eqref{commuta} are vacuous, and $B_j$ can be arbitrary. Yet this will already be an interesting case. 

\begin{remark}\label{affinep} These definitions extend mutatis mutandis to the case when $V$ is a possibly nontrivial vector bundle on $X$. In this case the trivial connection $d$ on the trivial bundle $X\times V\to X$ should be replaced by a fixed flat connection $\nabla(0)$ on the bundle $V$. 
In particular, when $\nabla$ is (affine) linear in $\bold s$, 
we have $\nabla(\bold s)=\nabla(0)-s_1B_1-...-s_nB_n$, 
where $B_i\in \Omega^1(X,{\rm End}V)$. 
In this case we will say that $\nabla$ is an 
{\bf affine pencil of flat connections}. 
Note that if $V$ is trivial, we have 
$\nabla(0)=d-B_0$, so 
$\nabla(\bold s)=d-B$, where $B=B_0+s_1B_1+...+s_nB_n$, 
which is an inhomogeneous (affine) linear function in $\bold s$, which justifies the terminology. Note that the notion of an affine pencil (unlike that of an ordinary pencil) is stable under gauge transformations, so it is the only meaningful notion when the bundle $V$ is not necessarily trivial and there is no canonical reference connection $\nabla(0)$.  

We will, however, mostly restrict ourselves to the case when the bundle $V$ is trivial, since this is so in all the examples we consider. 
\end{remark} 

\subsection{Jumping loci of families of flat connections} \label{julo} 
For a flat connection $\nabla$ on a vector bundle $V$ on $X$, let $\Gamma(\nabla)$ be the space of flat global sections of $\nabla$ 
in $\Gamma(X,V)$. 

Now let $\nabla(\bold s)$, $\bold s\in \Bbb C^n$ be a family of flat connections on the trivial bundle on $X$ with fiber $V$. For $d\in \Bbb N$ let $Q_{d}(\nabla)$ be the set of $\bold s\in \Bbb C^n$ for which $\dim \Gamma(\nabla(\bold s))\ge d$.

\begin{lemma}\label{countunion1} $Q_d(\nabla)$ is a countable union of Zariski closed subsets\footnote{Note that $Q_d(\nabla)$ need not be Zariski closed. For example, if $\nabla(s)=d-\frac{s}{x}dx$ on $\Bbb C^\times$ then 
$Q_1(\nabla)=\Bbb Z$.} of $\Bbb C^n$.
\end{lemma}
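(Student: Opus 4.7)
The plan is to filter the infinite dimensional space of algebraic sections $\Gamma(X,V) = V\otimes \mathcal O(X)$ by finite dimensional subspaces, reduce the flatness equation on each piece to a rank condition depending polynomially on $\bold s$, and then take a countable union.

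First, I would note that since $V$ has rank $N$, the dimension of $\Gamma(\nabla(\bold s))$ is bounded above by $N$ for every $\bold s$. Next, since $X$ is a variety of finite type over $\Bbb C$ (covered by finitely many affine opens, each with a finitely generated coordinate algebra), the $\Bbb C$-vector space $V\otimes \mathcal O(X)$ has countable dimension. Choose an exhaustion by finite dimensional subspaces
$$
W_1\subset W_2\subset W_3\subset\cdots,\qquad \bigcup_{k\ge 1} W_k = V\otimes \mathcal O(X).
$$

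The key construction is to find, for each $k$, a finite dimensional subspace $W_k'\subset \Omega^1(X)\otimes V$ such that $\nabla(\bold s)(W_k)\subset W_k'$ for \emph{every} $\bold s\in \Bbb C^n$. Write $B(\bold s)=\sum_\alpha B_\alpha \bold s^\alpha$ as a finite sum with $B_\alpha\in \Omega^1(X)\otimes \mathrm{End}\,V$; then
$$
\nabla(\bold s)w = dw - \sum_\alpha \bold s^\alpha B_\alpha w,
$$
so it suffices to take $W_k' := dW_k + \sum_\alpha B_\alpha W_k$, which is finite dimensional. In fixed bases for $W_k$ and $W_k'$, the restriction $\nabla(\bold s)|_{W_k}\colon W_k\to W_k'$ is represented by a matrix whose entries are polynomials in $\bold s$.

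Let $F_k(\bold s) := \ker\bigl(\nabla(\bold s)|_{W_k}\colon W_k\to W_k'\bigr)$. The condition $\dim F_k(\bold s)\ge d$ is equivalent to the vanishing of all $(\dim W_k - d + 1)\times (\dim W_k - d + 1)$ minors of this matrix, hence cuts out a Zariski closed subset $Q_{d,k}\subset \Bbb C^n$. Because any flat section lies in some $W_k$, we have $\Gamma(\nabla(\bold s)) = \bigcup_k F_k(\bold s)$, and since this union is of nested subspaces in an ambient space of dimension at most $N$, it stabilizes at some finite $k$ (depending on $\bold s$). Therefore
$$
Q_d(\nabla) = \bigcup_{k\ge 1} Q_{d,k},
$$
a countable union of Zariski closed subsets of $\Bbb C^n$.

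The only mildly nontrivial point is the construction of the finite dimensional target $W_k'$ that absorbs $\nabla(\bold s)(W_k)$ uniformly in $\bold s$; this is where polynomiality of $B$ in $\bold s$ is essential, and it is what reduces the problem to an honest finite dimensional rank condition whose locus is Zariski closed. Everything else is routine: the filtration exists because $\mathcal O(X)$ is countable dimensional, and the union over $k$ captures the full space of flat sections precisely because the latter is finite dimensional.
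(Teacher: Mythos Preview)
Your proof is correct and follows essentially the same approach as the paper's: filter $\Bbb C[X]\otimes V$ by finite dimensional pieces, observe that on each piece the flatness condition is a finite system of linear equations whose coefficients are polynomials in $\bold s$, and take the countable union. The paper simply fixes a basis $\{f_i\}$ of $\Bbb C[X]$, sets $\Bbb C[X]_N=\mathrm{span}(f_1,\dots,f_N)$, and defines $Q_{d,N}(\nabla)$ as the locus where $\dim\Gamma(\nabla(\bold s))_N\ge d$; your explicit construction of the finite dimensional target $W_k'$ is exactly what makes their phrase ``a finite system of homogeneous linear equations with coefficients in $\Bbb C[\bold s]$'' precise.
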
 

\begin{proof} 
Fix a basis $\lbrace f_i,i\ge 1\rbrace$ of $\Bbb C[X]$, and for $N\le \infty$ let $\Bbb C[X]_N$ be the span of $f_i$ with $i\le N$. For a flat connection $\nabla$ on the trivial bundle on $X$ with fiber $V$, let $\Gamma(\nabla)_N$ be the space of flat sections of $\nabla$ in $\Bbb C[X]_N\otimes V$. 
In particular, $\Gamma(\nabla)_\infty=\Gamma(\nabla)$. 

Now let $\nabla(\bold s)$ be our family. For $d\in \Bbb N$ let $Q_{d,N}(\nabla)$ be the set of $\bold s\in \Bbb C^n$ for which $\dim \Gamma(\nabla(\bold s))_N\ge d$. Since 
$\Gamma(\nabla(\bold s))_N$ is defined inside $\Bbb C[X]_N\otimes V$ 
by a finite system of homogeneous linear equations with coefficients in $\Bbb C[\bold s]$, 
the subsets $Q_{d,N}(\nabla)\subset \Bbb C^n$ are Zariski closed. Thus $Q_d(\nabla)=Q_{d,\infty}(\nabla)=\bigcup_{N\in \Bbb N} Q_{d,N}(\nabla)$ 
is a countable union of Zariski closed subsets. 
\end{proof} 

It is clear that $Q_0(\nabla)=\Bbb C^n$, $Q_{d+1}(\nabla)\subset Q_d(\nabla)$, and $Q_{{\rm dim}V+1}(\nabla)=\emptyset$, so let $d=d(\nabla)$ be the largest integer in $[0,\dim V]$ for which $Q_d(\nabla)=\Bbb C^n$, i.e., the smallest possible value of $\dim \Gamma(\nabla(\bold s))$. The proper subset $J(\nabla):=Q_{d+1}(\nabla)$ of $\Bbb C^n$ is then called the {\bf jumping locus} of $\nabla$. Lemma \ref{countunion1} implies

\begin{corollary} The jumping locus
$J(\nabla)$ is a countable union of proper Zariski closed subsets of $\Bbb C^n$. 
\end{corollary}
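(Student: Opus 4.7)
The plan is to apply Lemma \ref{countunion1} directly to $d+1$ and observe that the truncations appearing in its proof are automatically proper.

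First, I would recall from the proof of Lemma \ref{countunion1} the filtration by finite approximations: fix a basis $\lbrace f_i\rbrace$ of $\Bbb C[X]$, let $\Bbb C[X]_N$ be the span of $f_1,\dots,f_N$, and let $Q_{d+1,N}(\nabla)\subset \Bbb C^n$ be the Zariski closed subset cut out by the vanishing of the maximal minors expressing $\dim \Gamma(\nabla(\bold s))_N\ge d+1$. By construction $Q_{d+1}(\nabla)=\bigcup_{N\in \Bbb N} Q_{d+1,N}(\nabla)$, so $J(\nabla)$ is certainly a countable union of Zariski closed subsets of $\Bbb C^n$.

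Next I would verify that each $Q_{d+1,N}(\nabla)$ is in fact a \emph{proper} subset. Since $\Bbb C[X]_N\otimes V\subset \Bbb C[X]\otimes V$, we have the obvious inclusion $\Gamma(\nabla(\bold s))_N\subset \Gamma(\nabla(\bold s))$, hence $Q_{d+1,N}(\nabla)\subset Q_{d+1}(\nabla)$. By the very definition of $d=d(\nabla)$ as the largest integer with $Q_d(\nabla)=\Bbb C^n$, the set $Q_{d+1}(\nabla)$ is a proper subset of $\Bbb C^n$. Therefore each $Q_{d+1,N}(\nabla)$ is properly contained in $\Bbb C^n$, and $J(\nabla)$ is exhibited as a countable union of proper Zariski closed subsets.

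There is no real obstacle here: the argument is a one-line consequence of Lemma \ref{countunion1} once one notes that the approximations $Q_{d+1,N}$ inherit properness from $Q_{d+1}$. The only mild subtlety is making sure to write the union using the $Q_{d+1,N}$ rather than claim $Q_{d+1}(\nabla)$ itself is a single proper Zariski closed set (which it need not be, as illustrated by the footnote example $\nabla(s)=d-\tfrac{s}{x}dx$ on $\Bbb C^\times$, where $J(\nabla)=\Bbb Z$).
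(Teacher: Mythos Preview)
Your proof is correct and is exactly the argument the paper intends: the paper simply writes ``Lemma \ref{countunion1} implies'' before stating the corollary, and your proposal spells out the obvious details of why, using the same decomposition $J(\nabla)=\bigcup_N Q_{d+1,N}(\nabla)$ and the observation that each $Q_{d+1,N}(\nabla)\subset Q_{d+1}(\nabla)\subsetneq \Bbb C^n$.
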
 

If a countable union of Zariski closed subsets 
of $\Bbb C^n$ is the whole $\Bbb C^n$ then one of them must be the whole $\Bbb C^n$, so 
there exists $N=N(\nabla)\in \Bbb N$ such 
$Q_{d,N}(\nabla)=\Bbb C^n$. Thus for 
any $\bold s\notin J(\nabla)$ we have 
$\Gamma(\nabla(\bold s))_N=\Gamma(\nabla(\bold s))$. 
Hence for any point $\bold x\in X$ the map $J(\nabla)^c\to {\rm Gr}(d,V)$ 
from the complement of $J(\nabla)$ to the Grassmannian of $d$-dimensional subspaces in $V$ given by $\bold s\mapsto \Gamma(\nabla(\bold s))|_{\bold x}$ 
is rational, i.e., is the restriction of a rational map 
$\gamma(\nabla): \Bbb C^n\to {\rm Gr}(d,V)$.
Note that, as any rational map, this map is regular outside 
a Zariski closed subset $T(\nabla)\subset \Bbb C^n$ of codimension $\ge 2$. 

\subsection{Periodic families and pencils of flat connections} 
\begin{definition} A family $\nabla$ is said to be {\bf periodic} if 
there exist {\bf shift operators} 
$$
A_j\in GL(V)(\kk(\bold s)[X]),\ 1\le j\le n
$$
 such that 
$$
\nabla(\bold s+\bold e_j)\circ A_j(\bold s)=A_j(\bold s)\circ \nabla(\bold s),\ 1\le j\le n.
$$
\end{definition} 

\begin{remark} 1. Note that if $\nabla(\bold s)$ has generically trivial endomorphism algebra (i.e., trivial over $\Bbb C(\bold s)$) then $A_j$ are unique up to multiplication by a scalar rational function of $\bold s$. In this case there exist scalar rational functions $\xi_{jk}(\bold s)$ such that 
$$
A_k(\bold s+\bold e_j)A_j(\bold s)=\xi_{jk}(\bold s)A_j(\bold s+\bold e_k)A_k(\bold s).
$$

2. As before, these definitions extend straightforwardly to the case when $V$ is a possibly nontrivial vector bundle on $X$.
\end{remark}

\begin{example}\label{exshift} Let $\kk=\Bbb C$, $X=\Bbb C^\times$, $V=\Bbb C^2$, 
and $\nabla(s)=d-B(s,x)$, where 
$$
B(s,x):=\frac{1}{x}\begin{pmatrix} s& 1\\ 0& 0\end{pmatrix}.
$$
This family is actually an affine pencil, since the dependence on $s$ is affine linear. 
The equation for flat sections of $\nabla(s)$ is 
$$
(\tfrac{d}{dx}-B(s,x))F(x)=0,
$$ 
so it has a
fundamental solution $F_s(x):=\begin{pmatrix} x^s& -\frac{1}{s}\\ 0& 1\end{pmatrix}$.
 Thus $F_s(x)^{-1}=\begin{pmatrix} x^{-s}& \frac{x^{-s}}{s}\\ 0& 1\end{pmatrix}$, so this family is periodic with
$$
A(s,x)=F_{s+1}(x)F_s(x)^{-1}=\begin{pmatrix} x& \frac{x}{s}-\frac{1}{s+1}\\ 0& 1\end{pmatrix}.
$$
\end{example} 

\subsection{Families and pencils with periodic monodromy} 

Let $\nabla=\nabla(\bold s)$ be a family of flat connections on a complex variety $X=X(\Bbb C)$ with values in $V$ parametrized by $\Bbb C^n$. Let $\bold x_0\in X$, and for $\bold s\in \Bbb C^n$ let 
$$
\rho_{\bold s}: \pi_1(X,\bold x_0)\to GL(V)
$$ 
be the monodromy representation of $\nabla(\bold s)$. 

\begin{definition} We say that $\nabla$ has {\bf periodic monodromy} if 
for every $1\le j\le n$, the set $S_j$ of $\bold s$ for which the representation 
$\rho_{\bold s}$ is isomorphic to $\rho_{\bold s+\bold e_j}$ is 
not contained in a countable union of proper analytic subsets of $\Bbb C^n$.
\end{definition} 

In particular, every periodic family has periodic monodromy. 

Let $K:=\Bbb C(\bold q)$, where $\bold q=(q_1,...,q_n)$. 

\begin{theorem}\label{permoncrit} A {\bf pencil} $\nabla$ of flat connections on $X$ with fiber $V$ has periodic monodromy if and only if there exist a nonempty affine open subset $U\subset (\Bbb C^\times)^n$, a finite unramified Galois cover 
$p: \widetilde U\to U$, and 
a Galois-stable representation $\eta: \pi_1(X,\bold x_0)\to GL(V)(\Bbb C[\widetilde U])$ such that the monodromy representation $\rho_{\bold s}$ of $\nabla(\bold s)$ is isomorphic to $\eta|_{e^{2\pi i\bold s}}$ (hence also to $\rho_{\bold s+\bold e_j}$ for all $j$) when $e^{2\pi i\bold s}\in U$. Moreover, in this case $\eta$ is unique up to action of $GL(V)(\overline K)$. 
 \end{theorem}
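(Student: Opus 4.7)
My overall plan is to handle the ``if'' direction directly and to reduce the ``only if'' direction to Theorem~\ref{holfam} applied to the tuple of monodromy matrices along a finite generating set of $\pi_1(X,\bold x_0)$. For the ``if'' direction, suppose $\eta$ is as in the statement. Then for every $\bold s$ with $e^{2\pi i\bold s}\in U$, both $\rho_{\bold s}$ and $\rho_{\bold s+\bold e_j}$ are isomorphic to the single specialization $\eta|_{e^{2\pi i\bold s}}$ because $e^{2\pi i\bold e_j}=1$. The set of such $\bold s$ is the complement of a proper analytic subset of $\Bbb C^n$, hence is not contained in a countable union of proper analytic subsets, so $\nabla$ has periodic monodromy.

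For the converse, I would fix generators $\gamma_1,\dots,\gamma_m$ of the finitely generated group $\pi_1(X,\bold x_0)$ and consider the holomorphic map
\[
\Phi:\Bbb C^n\to W:=\mathrm{End}(V)^m,\qquad \Phi(\bold s):=(\rho_{\bold s}(\gamma_1),\dots,\rho_{\bold s}(\gamma_m)),
\]
whose holomorphicity is the standard parametric dependence of solutions of the linear system $\nabla(\bold s)F=0$. Its image lies in the closed $GL(V)$-invariant subvariety $Y\subset W$ defined by the relations of $\pi_1(X,\bold x_0)$, with $GL(V)$ acting by simultaneous conjugation. The periodic monodromy hypothesis translates directly into generic periodicity of $\Phi$ up to $GL(V)$-action, since being outside a countable union of proper analytic subsets in particular implies analytic Zariski density.

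The step I expect to be the main obstacle --- and the only place where the \textbf{pencil} hypothesis (as opposed to a general polynomial family) is essential --- is verifying that $\Phi$ is exponentially bounded. The linearity $B(\bold s)=\sum_j s_jB_j$ yields, for fixed piecewise smooth representatives of the $\gamma_i$, the path-ordered exponential estimate
\[
\|\rho_{\bold s}(\gamma_i)\|\le\exp\Bigl(\int_{\gamma_i}\|B(\bold s)\|\Bigr)\le e^{C\|\bold s\|}
\]
with $C$ depending only on the loops and on the $B_j$; for a polynomial family of higher degree the right side would be $\exp(C\|\bold s\|^d)$ and Lemma~\ref{complan} inside the proof of Theorem~\ref{holfam} would break down. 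With exponential boundedness in hand, Theorem~\ref{holfam} applied with $G=GL(V)$ acting on $W$ produces a nonempty open $U\subset(\Bbb C^\times)^n$, a finite unramified Galois cover $p:\widetilde U\to U$, and a Galois-stable regular map $\eta:\widetilde U\to W$, unique up to $GL(V)(\overline K)$, whose specializations are $GL(V)(\Bbb C)$-conjugate to $\Phi(\bold s)$. Since those specializations lie in the $GL(V)$-invariant closed subvariety $Y$ on an analytically dense subset of $\widetilde U$, the map $\eta$ automatically factors through $Y$ and therefore defines the desired representation $\eta:\pi_1(X,\bold x_0)\to GL(V)(\Bbb C[\widetilde U])$; uniqueness up to $GL(V)(\overline K)$ is inherited directly from Theorem~\ref{holfam}.
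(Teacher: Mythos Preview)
Your proposal is correct and follows essentially the same route as the paper: the paper also dismisses the ``if'' direction as obvious, invokes finite generation of $\pi_1(X,\bold x_0)$ to view $\rho_{\bold s}$ as a holomorphic map into the representation variety (this is exactly Example~\ref{holfam1}), uses linearity of the pencil to get the exponential bound on $\rho_{\bold s}(g)$, and then applies Theorem~\ref{holfam}. Your write-up is slightly more explicit about the path-ordered exponential estimate and about why $\eta$ lands in the relation subvariety $Y$, but these are details the paper simply takes for granted.
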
 

\begin{proof} The ``if" direction is obvious, so we only need to prove the ``only if" direction. It is well known that $\pi_1(X,\bold x_0)$ is finitely generated. Thus $\rho_{\bold s}$ may be viewed as a holomorphic family of representations (Example \ref{holfam1}). 

Since $\nabla$ is a pencil (i.e., linear in $\bold s$), for any $g\in \pi_1(X,\bold x_0)$ 
$$
\norm{\rho_{\bold s}(g)}=O(e^{C_g\norm{\bold s}}),\ \norm{\bold s}\to \infty
$$
for some $C_g>0$, i.e., $\rho_{\bold s}(g)$ is exponentially bounded for each $g$. Since $\nabla$ also has periodic monodromy, the result follows from Theorem \ref{holfam}. 
\end{proof} 

Let $\mathcal U:=E^{-1}(U)\subset \Bbb C^n$ and $\pi: \widetilde{\mathcal U}\to \mathcal U$ be the pullback of the cover $p: \widetilde U\to U$ under the map $E: \mathcal U\to U$. 
Denote the corresponding map $\widetilde{\mathcal U}\to \widetilde U$ by $\widetilde E$. 
Thus we have a Cartesian square of unramified analytic covers
\[\begin{tikzcd}
\widetilde{\mathcal U} \arrow{r}{\widetilde E} \arrow[swap]{d}{\pi} &\widetilde U  \arrow{d}{p} \\
\mathcal U \arrow{r}{E} & U
\end{tikzcd}
\]

\begin{proposition}\label{meromo} 
(i) An isomorphism $\phi: \rho_{\pi(\widetilde{\bold s})}\to \eta|_{\widetilde E(\widetilde{\bold s})}$ can be realized by a meromorphic function $\phi(\widetilde{\bold s})$ on $\widetilde{\mathcal U}$ with values in $GL(V)$. 

(ii) Suppose ${\rm End}_{\overline K}\eta=\overline K$, and let 
$U^\circ \subset U$ 
be the (nonempty, Zariski open) set of $\bold a\in U$ for which ${\rm End}\eta|_{\bold a}=\Bbb C$. Let $\phi_*(\widetilde{\bold s})$ be the image of $\phi(\widetilde{\bold s})$ in $PGL(V)$. Then the meromorphic function $\phi_*$ on $\widetilde{\mathcal U}$ is holomorphic on the open subset $\widetilde{\mathcal U}^\circ:=(E\circ \pi)^{-1}(U^\circ)\subset \widetilde{\mathcal U}$. 
\end{proposition}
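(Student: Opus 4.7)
My plan is to handle both parts by a single linear-algebraic mechanism: express the intertwining condition as a holomorphic family of linear equations in the unknown $\phi\in{\rm End}(V)$, then solve by Cramer's rule and use the a priori pointwise existence of isomorphisms to upgrade the solution to a meromorphic (respectively holomorphic) element of $GL(V)$ (respectively $PGL(V)$).

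For part (i), I would fix a finite generating set $g_1,\dots,g_m$ of $\pi_1(X,\bold x_0)$ and form the $\widetilde{\bold s}$-dependent linear map
$$
T(\widetilde{\bold s}):{\rm End}(V)\to {\rm End}(V)^m,\quad \phi\mapsto \bigl(\phi\,\rho_{\pi(\widetilde{\bold s})}(g_i)-\eta(g_i)|_{\widetilde E(\widetilde{\bold s})}\,\phi\bigr)_{i=1}^m,
$$
whose kernel is the space of intertwiners. The entries of $T$ are holomorphic on $\widetilde{\mathcal U}$ since $\rho_{\bold s}$ is entire in $\bold s$ and $\eta$ is regular on $\widetilde U$. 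By semicontinuity, ${\rm rank}\,T$ attains its generic minimum on a Zariski open $\widetilde{\mathcal U}'\subset \widetilde{\mathcal U}$. I would then pick a non-vanishing maximal minor of $T$ on $\widetilde{\mathcal U}'$ together with an auxiliary linear normalization $L(\phi)=v$ pinning down a unique element of $\ker T$; Cramer's rule expresses this solution as a ratio of minors of $T$ and of the augmented matrix, giving a meromorphic $\phi(\widetilde{\bold s})\in{\rm End}(V)$ on $\widetilde{\mathcal U}$ identically satisfying the intertwining relations. By Theorem \ref{permoncrit}, an invertible intertwiner exists at any chosen generic point, so I can match $(L,v)$ to such an isomorphism there; then $\det\phi$ is a meromorphic function on the irreducible $\widetilde{\mathcal U}$ nonzero at one point, hence generically nonzero, so $\phi\in GL(V)$ meromorphically.

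For part (ii), the hypothesis ${\rm End}_{\overline K}\eta=\overline K$ specializes, over the Zariski open $U^\circ$, to absolute irreducibility of $\eta|_{\bold a}$ and hence of any $\rho_{\bold s}$ with $e^{2\pi i\bold s}=\bold a$, so by Schur's lemma the intertwiner space is exactly one-dimensional at every point of $\widetilde{\mathcal U}^\circ$. For each $\widetilde{\bold s}_0\in\widetilde{\mathcal U}^\circ$, I would rerun the Cramer construction purely locally near $\widetilde{\bold s}_0$ with a minor of $T$ that is nonzero at $\widetilde{\bold s}_0$ and a normalization realized by an actual isomorphism there (both available since the one-dimensional intertwiner line contains an invertible element). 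This yields a local intertwiner $\psi(\widetilde{\bold s})$ that is holomorphic and pointwise invertible on a neighborhood of $\widetilde{\bold s}_0$. Since $\phi$ is another intertwiner where regular, one-dimensionality forces $\phi=c\,\psi$ for some meromorphic scalar $c$ on that neighborhood; projecting to $PGL(V)$ kills $c$, so $\phi_*=\psi_*$ near $\widetilde{\bold s}_0$, which is holomorphic. The hardest step, common to both parts, is engineering the Cramer output to be pointwise invertible and not merely nonzero; this is what the normalization trick is for, and it relies essentially on the a priori existence of invertible intertwiners guaranteed by Theorems \ref{permoncrit} and \ref{holfam}, together with the fact that meromorphic non-vanishing at one point of the irreducible variety $\widetilde{\mathcal U}$ propagates generically.
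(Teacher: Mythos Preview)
Your argument is correct and is essentially the paper's proof made explicit: where the paper invokes Chevalley's elimination of quantifiers for (i) and the removable singularity theorem for (ii), you carry out the same ideas by hand via Cramer's rule and a locally constructed holomorphic representative $\psi$. One minor slip: the rank of $T$ is lower semicontinuous, so it attains its generic \emph{maximum} (not minimum) on the Zariski open set---but this is evidently what you intend, since you then select a nonvanishing maximal minor there.
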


\begin{proof} 
(i) For $\phi\in {\rm End}V$, being a morphism of representations
$\rho_{\pi(\widetilde{\bold s})}\to \eta|_{\widetilde E(\widetilde{\bold s})}$
is a system of linear equations with coefficients 
in the algebra $R$ generated by some holomorphic functions $h_1,...,h_m$ on $\widetilde {\mathcal U}$. Moreover, for an analytically dense set of $\widetilde{\bold s}\in \widetilde{\mathcal U}$, this system has an invertible solution.  
Thus be Chevalley's elimination of quantifiers, 
there is an invertible solution $\phi$ of this system over ${\rm Frac}(R)$.   

(ii) This follows from the removable singularity theorem. 
\end{proof} 

\begin{remark} Theorem \ref{permoncrit} trivially extends to the case when $\nabla(\bold s)$ is an affine pencil of flat connections on a possibly nontrivial vector bundle $V$ on $X$ (cf. Remark \ref{affinep}). 
\end{remark} 

\subsection{Quantum connections} 

An important class of periodic families of connections is {\bf equivariant quantum connections} $\nabla(\bold s)$ of conical symplectic resolutions $\widetilde Y\to Y$ (see \cite{BMO},\cite{MO} and references therein). These connections are known explicitly in many cases, such as cotangent bundles of partial flag varieties $T^*G/P$, resolutions of Slodowy slices of nilpotent orbits of a simple Lie algebra, quiver varieties, etc. The parameters $s_j$ of such families are the equivariant parameters of the torus $\bold T$ acting on $Y$, the base is $X:=H^2(\widetilde Y,\Bbb C^\times)\setminus D$ where $D$ is a union of subtori, and the fiber is $H^*(\widetilde Y,\Bbb C)$. 
It is known that such families admit {\bf geometric shift operators} $A_j(\bold s)$ which can be defined using enumerative geometry, 
even though they may be difficult to compute explicitly (especially for copies of $\Bbb G_m\subset \bold T$ that dilate the symplectic form). This implies that such families are indeed periodic. 

Also in many examples the quantum connection $\nabla(\bold s)$ is a pencil, i.e., linear in the equivariant parameters $s_i$. Most of the ``trigonometric" examples of Section 5 (trigonometric KZ connections of type $A$, trigonometric Casimir connections, 
trigonometric Dunkl connections) have this property. In general, this happens, for example, when the torus $\bold T$ acting on $X$ has finitely many fixed points (see e.g. \cite{Lee} and references therein). 

\section{Quasi-motivic families and pencils of flat connections} 

\subsection{Quasi-motivic families} 

It is often hard to prove directly that a given family $\nabla$ of flat connections is periodic, as the operators $A_j$ may be difficult to compute. 
However, if $\kk=\Bbb C$ and $\nabla$ has regular singularities, 
there is an approach to checking this property using the monodromy of $\nabla$. This approach is based on the following theorem.  

\begin{theorem}\label{qmper} A family $\nabla$ of flat connections
with regular singularities on a complex variety $X$ has periodic monodromy if and only if it is periodic.  
\end{theorem}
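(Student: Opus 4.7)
The implication periodic $\Rightarrow$ periodic monodromy is already noted in the paper. For the converse, suppose $\nabla$ has regular singularities and periodic monodromy. My plan is, for each $j$, to construct an algebraic gauge transformation $A_j(\bold s)\in GL_N(\Bbb C(\bold s)[X])$ satisfying $\nabla(\bold s+\bold e_j)\circ A_j(\bold s) = A_j(\bold s)\circ \nabla(\bold s)$ by combining the topological hypothesis with Deligne's Riemann--Hilbert correspondence for algebraic flat connections with regular singularities. The central object is the auxiliary \emph{Hom connection} $\nabla^{(j)}_{\mathrm{end}}(\bold s)$ on the trivial bundle $\mathrm{End}(V)\to X$, whose flat sections are precisely those $A$ satisfying $\nabla(\bold s+\bold e_j)\circ A = A\circ\nabla(\bold s)$. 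This connection inherits regular singularities from $\nabla$, and its monodromy is the natural action of $\pi_1(X,\bold x_0)$ on $\mathrm{Hom}(\rho_{\bold s},\rho_{\bold s+\bold e_j})$.

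The first step is to show that $\dim \Gamma(\nabla^{(j)}_{\mathrm{end}}(\bold s))\ge 1$ for every $\bold s$. By Riemann--Hilbert this dimension equals the dimension of intertwiners between $\rho_{\bold s}$ and $\rho_{\bold s+\bold e_j}$, hence it is positive throughout the periodicity set $S_j$, which by hypothesis is not contained in any countable union of proper analytic subsets of $\Bbb C^n$. On the other hand, Lemma \ref{countunion1} presents $Q_1(\nabla^{(j)}_{\mathrm{end}})\supseteq S_j$ as a countable union of Zariski closed subsets of $\Bbb C^n$, so one of these must equal all of $\Bbb C^n$. Letting $d=d(\nabla^{(j)}_{\mathrm{end}})\ge 1$ be the resulting generic dimension, the construction of Section \ref{julo} produces a rational map $\gamma\colon \Bbb C^n\dashrightarrow \mathrm{Gr}(d,\mathrm{End}(V)|_{\bold x_0})$ sending $\bold s$ to the evaluation of $\Gamma(\nabla^{(j)}_{\mathrm{end}}(\bold s))$ at $\bold x_0$. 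On the dense subset of $S_j$ lying outside the jumping and indeterminacy loci, the corresponding subspace contains an \emph{invertible} matrix, since the actual isomorphism $\rho_{\bold s}\cong \rho_{\bold s+\bold e_j}$ lifts via Riemann--Hilbert to a gauge isomorphism of flat connections. Invertibility is Zariski open on any $d$-dimensional subspace of matrices once that subspace meets it, so a sufficiently generic rational section $\alpha(\bold s)$ of the tautological bundle over $\gamma$ will be invertible for generic $\bold s$; parallel transport with respect to $\nabla^{(j)}_{\mathrm{end}}(\bold s)$ then extends $\alpha(\bold s)$ uniquely to a single-valued flat section $A_j(\bold s)$ on $X^{\mathrm{an}}$ (single-valuedness is automatic because $\alpha(\bold s)$ is the evaluation of a $\pi_1$-invariant element).

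The main obstacle is upgrading this analytic $A_j$ to an algebraic element of $GL_N(\Bbb C(\bold s)[X])$. For this I would invoke Deligne's theorem that the functor from $\mathcal O$-coherent algebraic $D_X$-modules with regular singularities to local systems on $X^{\mathrm{an}}$ is an equivalence of categories. Applied to $\nabla(\bold s)$ and $\nabla(\bold s+\bold e_j)$ for each generic $\bold s$ --- both algebraic, $\mathcal O$-coherent, regular-singular $D$-modules with underlying $\mathcal O_X$-module the trivial rank-$N$ bundle --- this forces the isomorphism of monodromies to come from an $\mathcal O_X$-linear isomorphism of $D$-modules, i.e.\ a matrix $A_j(\bold s)\in M_N(\Bbb C[X])$ whose inverse (the $D$-module morphism in the opposite direction) also lies in $M_N(\Bbb C[X])$. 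Combined with the rational dependence on $\bold s$ from the previous paragraph, this yields $A_j\in GL_N(\Bbb C(\bold s)[X])$, as required. The regular-singularities hypothesis is crucial here: without it, Riemann--Hilbert ceases to be an equivalence on the $D$-module side and an isomorphism of monodromies need not come from an algebraic gauge transformation, so the final algebraicity step would fail.
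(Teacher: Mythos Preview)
Your overall strategy is sound and tracks the paper's proof closely: both pass to the auxiliary connection $\nabla^{(j)}$ on $\mathrm{End}(V)$, use regular singularities to identify its algebraic flat sections with intertwiners of monodromy, and then argue that an invertible flat section exists over $\Bbb C(\bold s)$. The paper does this by constructing, for each $\bold s\in S_j$ individually, an algebraic $A_j(\bold s)\in GL_N(\Bbb C[X])$ via the growth estimate at the boundary, then stratifying $S_j$ by the degree $M$ needed and applying Chevalley. You instead invoke the jumping-locus machinery of Section~\ref{julo} together with Lemma~\ref{countunion1} to conclude $Q_1(\nabla^{(j)}_{\mathrm{end}})=\Bbb C^n$ and obtain the rational map $\gamma$ directly; this is a legitimate and arguably cleaner route.

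The gap is in your last paragraph. Once you have invoked Section~\ref{julo}, there is no ``main obstacle'' left: by that construction, for $\bold s\notin J(\nabla^{(j)}_{\mathrm{end}})$ one has $\Gamma(\nabla^{(j)}_{\mathrm{end}}(\bold s))=\Gamma(\nabla^{(j)}_{\mathrm{end}}(\bold s))_N$, and the latter is cut out inside the fixed finite-dimensional space $\Bbb C[X]_N\otimes\mathrm{End}(V)$ by linear equations with coefficients in $\Bbb C[\bold s]$. Hence a rational section $\alpha(\bold s)$ of the tautological bundle over $\gamma$ lifts, via this linear algebra, to a flat section $A_j\in\Bbb C(\bold s)[X]_N\otimes\mathrm{End}(V)$ that is simultaneously algebraic in $\bold x$ and rational in $\bold s$. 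Your separate appeal to Deligne's Riemann--Hilbert equivalence is therefore redundant, and as written it does not actually finish the job: applying it for each fixed $\bold s$ gives algebraicity in $\bold x$ but says nothing about how the resulting $A_j(\bold s)$ varies with $\bold s$; ``combined with the rational dependence on $\bold s$ from the previous paragraph'' only refers to rationality of the value $\alpha(\bold s)$ at the single point $\bold x_0$, not of the full section. Replace the last paragraph by the observation above, and then your invertibility argument (generic $\gamma(\bold s)$ meets $GL(V)$, so a generic $\Bbb C(\bold s)$-linear combination of a rational basis is invertible at $\bold x_0$, hence everywhere by parallel transport) completes the proof.
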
 

\begin{proof} Any periodic family has periodic monodromy, so we just need to prove the ``only if" direction. 

Consider the family of flat connections
$\nabla^{(j)}(\bold s):=\nabla(\bold s+\bold e_j)\otimes \nabla(\bold s)^*$. 
Thus $\nabla^{(j)}(\bold s)$ has monodromy representation $\rho_{\bold s+\bold e_j}\otimes \rho_{\bold s}^*$. 

By assumption, the periodicity condition for index $j$ holds for $\bold s$ belonging to some subset $S_j\subset \Bbb C^n$ which is not contained in a countable union of proper Zariski closed subsets. For a fixed $j$ and $\bold s\in S_j$ fix an isomorphism $\psi_{\bold s}: \rho_{\bold s}\cong \rho_{\bold s+\bold e_j}$. There exists a flat holomorphic section $A_j(\bold s)$ of $\nabla^{(j)}(\bold s)$ such that $A_j(\bold s,\bold x_0)=\psi_{\bold s}$; in particular, $A_j(\bold s,\bold x)\in GL(V)$ for all $\bold x\in X$. 

Fix a smooth compactification $\overline X$ of $X$ such that 
$\overline X\setminus X$ 
is a normal crossing divisor (it exists by 
Hironaka's theorem). Since $\nabla^{(j)}(\bold s)$ 
has regular singularities, the functions $A_j(\bold s,\bold x)$ have at worst power growth near $\overline X\setminus X$. Hence they are meromorphic functions on $\overline X$. 
Thus $A_j(\bold s,\bold x)$ are rational functions, i.e., $A_j(\bold s)\in 
GL(V)(\Bbb C[X])$. 

Fix a basis $\lbrace f_i,i\in \Bbb Z_{\ge 1}\rbrace$ of $\Bbb C[X]$. 
For $M\in \Bbb N$ let $S_j(M)\subset S_j$ be the set 
of $\bold s\in S$ for which 
$$
A_j(\bold s,\bold x)=\sum_{l=1}^M A_{jl}(\bold s)f_l(\bold x).
$$
for suitable matrices $A_{jl}(\bold s)$. It is clear that $S_j(M)\subset S_j(M+1)$ 
and $\cup_M S_j(M)=S_j$. Since $S_j$ is not contained in a countable union of proper Zariski closed subsets, there exists $M$ for which $S_j(M)$ 
is Zariski dense in $\Bbb C^n$. The property that $A_j(\bold s,\bold x)$ is a flat section of $\nabla^{(j)}(\bold s)$ is then a system of homogeneous linear equations on the matrices $A_{jl}(\bold s)$, $l=1,...,M$, over $\Bbb C(\bold s)$. By Chevalley's elimination of quantifiers, since this system has an invertible solution for $\bold s\in S_j(M)$, it must have one over $\Bbb C(\bold s)$, where $\bold s$ is a collection of variables, as claimed. 
\end{proof} 

\begin{remark} If $\nabla$ has irregular singularities, then it can have periodic monodromy without being periodic, for example, $\nabla(s)=d-s$ on $X=\Bbb A^1$ (the monodromy is trivial for all $s$ since $X$ is simply connected). We expect, however,  that Theorem \ref{qmper} can be extended to the irregular case (at least when $\dim X=1$) if 
the condition of periodic monodromy is replaced by ``periodic monodromy and Stokes data".  
We hope that this can be done using the techniques of \cite{Ka}. 
\end{remark} 

Theorem \ref{qmper} motivates the following definition. 

\begin{definition} We say that a family $\nabla$ of flat connections over an algebraically closed field $\kk$ of characteristic zero is {\bf quasi-motivic} if 

(i) $\nabla(\bold s)$ has regular singularities for all $\bold s\in \kk^n$, and 

(ii) $\nabla$ is periodic. 
\end{definition} 

By Theorem \ref{qmper}, if $\kk=\Bbb C$ then $\nabla$ is quasi-motivic if and only if it has regular singularities and periodic monodromy. 

\subsection{Jumping loci of quasi-motivic pencils} 
In the next few subsections we investigate various kinds of singularities of quasi-motivic pencils. Since we are using the Riemann-Hilbert correspondence, we will work over $\kk=\Bbb C$, but as our statements are purely algebraic, they hold, by standard abstract nonsense, over any algebraically closed field $\kk$ of characteristic zero. 

Let $\nabla$ be a quasi-motivic pencil. Let $Z:=U^c\subset (\Bbb C^\times)^n$ be the complement of the set $U$ from Theorem \ref{permoncrit}, a divisor in $(\Bbb C^\times)^n$. Let $Z_{\rm ht}\subset Z$ be the union of the components of $Z$ which are (affine) hypertori. Then $\mathcal Z:=E^{-1}(Z_{\rm ht})\subset \Bbb C^n$ is an arrangement of 
translates of finitely many hyperplanes defined over $\Bbb Q$, with equations
$$
a_{i1}s_1+...+a_{in}s_n=c_i+N,\ 1\le i\le p,
$$ 
where $a_{ij}\in \Bbb Z$ (coprime for each $i$), $N\in \Bbb Z$, $c_i\in \Bbb C$. 

\begin{theorem} \label{permoncrit1} Every codimension $1$ irreducible Zariski closed subset $H$ of the jumping locus $J(\nabla)$ is a hyperplane contained in $\mathcal Z$. 
\end{theorem}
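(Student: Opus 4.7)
The overall plan is to show that any codimension-$1$ component $H$ of $J(\nabla)$ has its $E$-image contained in $Z$, and then invoke Theorem \ref{Zil} to conclude that $H$ is a translate of a rational hyperplane and that the corresponding component of $Z$ is a hypertorus. The key inputs beyond the general structure of jumping loci are the Riemann-Hilbert identification $\dim \Gamma(\nabla(\bold s)) = \dim V^{\rho_{\bold s}}$ and the description $\rho_{\bold s}\cong \eta|_{e^{2\pi i\bold s}}$ on $\mathcal{U}$ from Theorem \ref{permoncrit}.

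Before starting, I would shrink $U$ so that $d_U:=\dim V^{\eta|_{\widetilde{\bold a}}}$ is constant on $\widetilde U$. This is possible because the locus where this dimension exceeds its minimum is a Galois-stable proper Zariski closed subset of $\widetilde U$; its image in $U$ can be removed, and one may pass to an affine open subset. With $U$ so chosen, regularity of $\nabla$ and Riemann-Hilbert give $\dim\Gamma(\nabla(\bold s)) = \dim V^{\rho_{\bold s}}$, and Theorem \ref{permoncrit} then yields $\dim\Gamma(\nabla(\bold s))=d_U$ for every $\bold s\in\mathcal{U}:=E^{-1}(U)$. Since $J(\nabla)$ is a countable union of proper Zariski closed subsets by Lemma \ref{countunion1}, its complement has empty interior in the Euclidean topology, so the nonempty open set $\mathcal{U}$ meets $J(\nabla)^c$. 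This forces $d(\nabla)=d_U$, hence $J(\nabla)\cap\mathcal{U}=\emptyset$, i.e., $J(\nabla)\subset E^{-1}(Z)$.

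Now let $H$ be an irreducible Zariski closed codimension-$1$ subset of $J(\nabla)$, so $E(H)\subset Z$. Since $E$ is a local biholomorphism, $E$ restricted to the smooth locus of $H$ has image of complex dimension $n-1$. Near a smooth point $\bold s_0\in H$, this image lies in one of the finitely many irreducible codimension-$1$ components of the divisor $Z$, say $Z_0$; by irreducibility of $H$ and the analytic identity principle on its smooth locus, $E(H)\subset Z_0$ globally. Thus $H$ is an irreducible algebraic hypersurface in $\mathbb{C}^n$ whose $E$-image lies in the algebraic hypersurface $Z_0\subset(\mathbb{C}^\times)^n$, and Theorem \ref{Zil} gives that $H$ is a translate of a hyperplane defined over $\mathbb{Q}$ and that $E(H)$ is an affine hypertorus. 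Since this codimension-$1$ hypertorus lies in the irreducible codimension-$1$ variety $Z_0$, it equals $Z_0$, so $Z_0$ is itself a hypertorus and $Z_0\subset Z_{\rm ht}$. Therefore $H\subset E^{-1}(Z_0)\subset \mathcal{Z}$.

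The main difficulty is the first reduction $J(\nabla)\cap\mathcal{U}=\emptyset$: this is what translates ``jumping'' of flat sections into a constraint on $E(H)$ living in the algebraic divisor $Z\subset(\mathbb{C}^\times)^n$, the setting in which Ax's theorem can be applied. Once that reduction is in hand, the analytic-geometric step identifying $E(H)$ with a whole irreducible component of $Z$ is routine, and Theorem \ref{Zil} then finishes the argument.
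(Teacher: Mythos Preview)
Your argument is essentially the paper's own proof, just with the details spelled out: identify $\dim\Gamma(\nabla(\bold s))$ with $\dim V^{\rho_{\bold s}}$ via Riemann--Hilbert, use Theorem~\ref{permoncrit} to push the jumping condition down to $(\Bbb C^\times)^n$, and then invoke Theorem~\ref{Zil}. One slip: you wrote ``its complement has empty interior,'' but you mean that $J(\nabla)$ itself has empty interior (equivalently, $J(\nabla)^c$ is dense), which is what actually forces $\mathcal U$ to meet $J(\nabla)^c$; also note that your preliminary shrinking of $U$ technically enlarges $\mathcal Z$, so you are proving the theorem for that (possibly smaller) choice of $U$---this is the same implicit convention the paper uses.
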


\begin{proof} Let $\rho_{\bold s}$ be the monodromy representation of $\pi_1(X,\bold x_0)$ defined by $\nabla(\bold s)$. Since $\nabla(\bold s)$ has regular singularities, $\Gamma(\nabla(\bold s))\cong {\rm Hom}_{\pi_1(X,\bold x_0)}(\Bbb C,\rho_{\bold s})$, thus 
$J(\nabla)$ is the set of $\bold s$ for which ${\rm dim} {\rm Hom}_{\pi_1(X,\bold x_0)}(\Bbb C,\rho_{\bold s})> d(\nabla)$. Since $\nabla$ has periodic monodromy, by Theorem \ref{permoncrit}, $E(H)$ is contained in $Z$. So Theorem \ref{Zil} implies 
that $E(H)$
is contained in $Z_{\rm ht}$, as desired. 
\end{proof} 

\begin{example} Since $\Gamma(\nabla\otimes \nabla^*)={\rm End}(\nabla)$ is the endomorphism algebra of $\nabla$, Theorem \ref{permoncrit1}
applies to the jumping locus $J(\nabla\otimes \nabla^*)$ for the dimension of ${\rm End}(\nabla)$.
Thus every codimension $1$ irreducible Zariski closed subset $H$ of the set of $\bold s\in \Bbb C^n$ where ${\rm End}(\nabla(\bold s))$ exceeds its generic value is a 
hyperplane contained in $\mathcal Z$. 
\end{example} 

\subsection{Poles of shift operators for quasi-motivic pencils} 
Let us choose the shift operators $A_j$ for $\nabla$. Let $\mathcal B_j$ be the set of exceptional values of $\bold s$ where $\nabla(\bold s)$ is not isomorphic to $\nabla(\bold s+\bold e_j)$.  
It is clear that $\mathcal B_j$ is a contained in the pole 
divisor $\mathcal Z_j\subset \Bbb C^n$ of the projection $A_{j*}$ of $A_j$ to $PGL(V)$. Thus every codimension $1$ irreducible component of the Zariski closure 
$\overline{\mathcal B}_j$ is a component of $\mathcal Z_j$. 

\begin{theorem}\label{badval} (i) All codimension $1$ irreducible components of $\overline{\mathcal B}_j$ are contained in $\mathcal Z$.

(ii) If $\nabla$ has generically trivial endomorphism algebra (i.e., trivial over $\Bbb C(\bold s)$) then $\mathcal Z_j$ are contained in $\mathcal Z$.
\end{theorem}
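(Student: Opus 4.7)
The plan is to handle (i) and (ii) in sequence. In both cases the strategy is the same: first trap the offending locus inside $E^{-1}(Z)\cup(\text{small set})$, then use Theorem \ref{Zil} to force each codimension $1$ component to be a translate of a hyperplane defined over $\Bbb Q$, and finally match dimensions against the irreducible components of $Z$. Part (ii) requires two extra inputs: Proposition \ref{meromo}(ii) to localize the poles of $A_{j*}$, and Theorem \ref{permoncrit1} applied to the tensor-dual pencil $\nabla\otimes \nabla^*$ to dispatch a case that does not appear in (i).

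For part (i), I would begin by noting that by Theorem \ref{permoncrit}, whenever $\bold s\in \mathcal U=E^{-1}(U)$ both $\rho_{\bold s}$ and $\rho_{\bold s+\bold e_j}$ are conjugate to $\eta|_{E(\bold s)}$ (because $E(\bold s+\bold e_j)=E(\bold s)$), and hence to each other. Thus $\mathcal B_j\subset E^{-1}(Z)$. For a codimension $1$ irreducible component $H$ of $\overline{\mathcal B}_j$, $E(H)$ is therefore contained in the algebraic hypersurface $Z$, and Theorem \ref{Zil} forces $H$ to be a translate of a hyperplane defined over $\Bbb Q$ while $E(H)$ is an irreducible affine hypertorus of dimension $n-1$. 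Since every irreducible component of the divisor $Z$ has dimension $n-1$, $E(H)$ must coincide with one of them; the corresponding component of $Z$ is then a hypertorus, hence lies in $Z_{\rm ht}$, and $H\subset E^{-1}(Z_{\rm ht})=\mathcal Z$.

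For part (ii), I would first invoke Proposition \ref{meromo}(ii) to see that, under the assumption of generically trivial endomorphism algebra, $\phi_*$ is holomorphic on $\widetilde{\mathcal U}^\circ:=(E\circ \pi)^{-1}(U^\circ)$. Since $\widetilde E(\widetilde{\bold s}+\bold e_j)=\widetilde E(\widetilde{\bold s})$, the shift $\widetilde{\bold s}\mapsto \widetilde{\bold s}+\bold e_j$ preserves $\widetilde{\mathcal U}^\circ$, and the identity
\[
A_{j*}(\bold s,\bold x_0)=\phi_*(\widetilde{\bold s}+\bold e_j)^{-1}\phi_*(\widetilde{\bold s}),
\]
valid because $A_j$ is unique up to a scalar, makes $A_{j*}(\cdot,\bold x_0)$ holomorphic on $E^{-1}(U^\circ)$. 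Parallel transport along the flat connection $\nabla^{(j)}$ then propagates this holomorphy to all $\bold x$, so $\mathcal Z_j\subset E^{-1}\bigl(Z\cup (U\setminus U^\circ)\bigr)$. Now let $H$ be a codimension $1$ irreducible component of $\mathcal Z_j$; since $Z\cup \overline{U\setminus U^\circ}^{\mathrm{Zar}}$ is a proper Zariski-closed subset of $(\Bbb C^\times)^n$, it lies in an algebraic hypersurface, and Theorem \ref{Zil} again makes $H$ a hyperplane over $\Bbb Q$ with $E(H)$ a hypertorus. If $E(H)\subset Z$, the argument of (i) yields $H\subset \mathcal Z$. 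The main obstacle is the remaining case $E(H)\not\subset Z$: then $E(H)\cap U$ is Zariski dense in $E(H)$ and contained in $U\setminus U^\circ$, so for generic $\bold s\in H$ we have $\dim \mathrm{End}(\nabla(\bold s))>1$, and a Baire argument applied to the countable-union description of $J(\nabla\otimes \nabla^*)$ from Lemma \ref{countunion1} places $H$ as a codimension $1$ irreducible Zariski-closed subset of $J(\nabla\otimes \nabla^*)$. Since $\nabla\otimes \nabla^*$ is itself quasi-motivic (regular singularities and periodic monodromy pass to it, with $\eta\otimes \eta^*$ defined on the same $\widetilde U$, so its associated arrangement is contained in $\mathcal Z$), Theorem \ref{permoncrit1} applied to $\nabla\otimes \nabla^*$ finally gives $H\subset \mathcal Z$, completing the proof.
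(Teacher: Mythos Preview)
Your argument uses the same key tools as the paper (Riemann--Hilbert, Theorem~\ref{Zil}, Proposition~\ref{meromo}(ii), and Theorem~\ref{permoncrit1} applied to $\nabla\otimes\nabla^*$), but there are two small gaps in~(i) and a difference in organization in~(ii) worth noting.

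In~(i), after showing $\rho_{\bold s}\cong\rho_{\bold s+\bold e_j}$ for $\bold s\in\mathcal U$, you conclude $\mathcal B_j\subset E^{-1}(Z)$; this needs the Riemann--Hilbert correspondence (\cite{De1}) to upgrade isomorphism of monodromies to isomorphism of connections, since $\mathcal B_j$ records the latter. More seriously, from $\mathcal B_j\subset E^{-1}(Z)$ you pass directly to $E(H)\subset Z$ for a component $H$ of the \emph{Zariski closure} $\overline{\mathcal B}_j$, but $E^{-1}(Z)$ is only Euclidean-closed, not Zariski-closed. The paper fills this by observing that $\mathcal B_j$ is constructible, so its Zariski and Euclidean closures coincide, whence $\overline{\mathcal B}_j\subset E^{-1}(Z)$.

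In~(ii) your route is correct but more circuitous than the paper's. You work on $E^{-1}(U^\circ)$ and then split into the cases $E(H)\subset Z$ and $E(H)\not\subset Z$, disposing of the second via a Baire argument and Theorem~\ref{permoncrit1}. The paper instead invokes Theorem~\ref{permoncrit1} for $\nabla\otimes\nabla^*$ \emph{upfront}: since every codimension~$1$ piece of the endomorphism-jumping locus lies in $\mathcal Z\subset E^{-1}(Z)$, which is disjoint from $E^{-1}(U)$, the set $E^{-1}(U)\setminus E^{-1}(U^\circ)$ has codimension $\ge 2$, so $\widehat A_{j*}$ extends holomorphically across it. This gives $\mathcal Z_j\subset E^{-1}(Z)$ in one stroke, and your ``remaining case'' never arises. (Indeed, your own second case ends in a hidden contradiction: you deduce $H\subset\mathcal Z\subset E^{-1}(Z)$, hence $E(H)\subset Z$ after all.)
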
 

\begin{proof} (i) Since $\nabla$ has regular singularities, by the Riemann-Hilbert correspondence (\cite{De1})
we have $\mathcal B_j\subset E^{-1}(Z)$. 
Hence 
$\overline{\mathcal B_j}\subset  E^{-1}(Z)$, since 
$E^{-1}(Z)$ is closed in the Euclidean topology, and for a constructible set 
the Zariski and Euclidean closures are the same. 
Now, by Theorem \ref{Zil}, every component of $\mathcal Z_j$ that is contained in 
$E^{-1}(Z)$ is, in fact, contained in $E^{-1}(Z_{\rm ht})$. 
This implies (i). 

(ii) Let $\widetilde {\mathcal{U}}$ be as in Proposition \ref{meromo}. 
Since $\nabla$ has generically trivial endomorphism algebra, by Subsection \ref{julo}
this is so outside a countable union of hypersurfaces, so 
${\rm End}_{\overline K}\eta=\overline{K}$. Thus, in view of Theorem \ref{permoncrit1}, 
by Theorem \ref{permoncrit} and Proposition \ref{meromo}(ii) we have an isomorphism $\widehat A_j(\widetilde{\bold s}): \rho_{\bold s}\to \rho_{\bold s+\bold e_i}$ which depends holomorphically on $\widetilde {\bold s}\in (E\circ \pi)^{-1}(U)$, where $\bold s=\pi(\widetilde {\bold s})$. 
Let $\widehat A_{j*}(\widetilde{\bold s})$ be the image of $\widehat A_j(\widetilde{\bold s})$ in $PGL(V)$. Since the endomorphism algebra of $\nabla(\bold s)$ is trivial when $\widetilde {\bold s}\in \widetilde{\mathcal U}$, we have $\widehat A_{j*}(\widetilde{\bold s})=A_{j*}(\bold s)$, $\widetilde{\bold s}\in \widetilde{\mathcal U}$. It follows that $A_{j*}$ is regular on $E^{-1}(U)$, so $\mathcal Z_j\subset E^{-1}(Z)$. Thus by Theorem \ref{Zil}, $\mathcal Z_j\subset E^{-1}(Z_{\rm ht})=\mathcal Z$. 
\end{proof} 

\subsection{Endomorphism algebras of quasi-motivic pencils}  
For a finite dimensional $GL(V)$-module $W$, let $\nabla_W$ be the flat connection 
on $X$ with fiber $W$ associated to $\nabla$. 
It is clear that if $\nabla$ has regular singularities 
then so does $\nabla_W$, and if 
$\nabla(\bold s)$ has periodic monodromy, is periodic, or is quasi-motivic, then so is $\nabla_W(\bold s)$

Let $d_W=d_W(\nabla):=d(\nabla_W)$. 
Let $Y\subset {\rm Gr}(d_W,W)$ be the Zariski closure of the image of the rational map 
$\gamma=\gamma(\nabla_W)$ defined in Subsection \ref{julo}, an irreducible closed subvariety.

\begin{theorem}\label{densor} The action of $GL(V)$ on $Y$ has a (unique) dense orbit $Y^\circ$. 
\end{theorem}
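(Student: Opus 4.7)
The plan is to reduce, via Theorem \ref{permoncrit} and the Riemann--Hilbert correspondence, to a statement about the Galois-stable representation $\eta$, and then to prove the required constancy of $GL(V)$-orbits by combining an invariant-theoretic step with a connectedness/dimension argument.

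First, by Theorem \ref{permoncrit} and Proposition \ref{meromo}, for $\bold s$ with $\widetilde E(\widetilde{\bold s})\in \widetilde U$ the monodromy $\rho_{\bold s}$ of $\nabla(\bold s)$ acting on $W$ is isomorphic to $\eta|_{\widetilde E(\widetilde{\bold s})}$ via a meromorphic $\phi(\widetilde{\bold s})\in GL(V)$. Under Riemann--Hilbert, $\gamma(\bold s)=\Gamma(\nabla_W(\bold s))|_{\bold x_0}$ corresponds to the invariant subspace $W^{\rho_{\bold s}}$, so the $GL(V)$-orbit of $\gamma(\bold s)$ in ${\rm Gr}(d_W,W)$ coincides with that of $W^{\eta|_{\widetilde E(\widetilde{\bold s})}}$. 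In particular, $\gamma(\bold s)$ and $\gamma(\bold s+\bold e_j)$ lie in the same $GL(V)$-orbit for generic $\bold s$ and every $1\le j\le n$.

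Next, for any $GL(V)$-invariant rational function $f$ on ${\rm Gr}(d_W,W)$, the composite $f\circ\gamma$ is a rational function on $\Bbb C^n$ that is $\Bbb Z^n$-periodic by the previous paragraph. Such a function must be constant---otherwise its divisor of zeros or poles would be an infinite union of lattice translates, incompatible with algebraicity. Hence every $GL(V)$-invariant rational function is constant on $Y$, so $Y$ lies in a single fiber $F$ of the categorical quotient ${\rm Gr}(d_W,W)\to {\rm Gr}(d_W,W)/\!/GL(V)$. Within $F$, let $F_{\max}$ be the Zariski open subset on which the orbit-dimension function $y\mapsto \dim GL(V)\cdot y$ attains its maximum. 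Since all orbits in $F_{\max}$ have the same dimension, each is both open (as the top stratum of its closure in $F$) and closed in $F_{\max}$, so $F_{\max}$ is a disjoint union of clopen $GL(V)$-orbits. By lower semicontinuity, $\gamma^{-1}(F_{\max})\subseteq \Bbb C^n$ is Zariski open and nonempty, hence irreducible and connected. It follows that $\gamma(\gamma^{-1}(F_{\max}))$ is contained in a single orbit $\mathcal O\subseteq F_{\max}$. Setting $Y^\circ:=Y\cap \mathcal O$ gives a nonempty Zariski open (hence Zariski dense, by irreducibility of $Y$) subset of $Y$; the uniqueness of the dense orbit is immediate from the disjointness of distinct $GL(V)$-orbits.

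The main obstacle is the final connectedness step singling out $\mathcal O$: one has to confirm that orbits in $F_{\max}$ really do form a clopen partition (using local closedness of orbits together with their common dimension) and that the orbit-dimension function actually attains its generic maximum along $\gamma$. As an alternative, and conceptually cleaner, route one may invoke the generic constancy of the algebraic monodromy group $\overline{\eta|_{\widetilde{\bold a}}(\pi_1(X,\bold x_0))}^{\,{\rm Zar}}\subseteq GL(V)$ over $\widetilde U$: since $W$ is an algebraic $GL(V)$-representation, $W^{\eta|_{\widetilde{\bold a}}}$ equals the invariants of this Zariski closure, and the generic constancy of its $GL(V)$-conjugacy class directly pinpoints the single orbit $\mathcal O$.
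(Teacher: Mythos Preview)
Your reduction and the periodicity observation are on target, but the conclusion via $F_{\max}$ has a genuine gap. Orbits of maximal dimension are indeed closed in $F_{\max}$ (their boundaries consist of strictly lower-dimensional orbits, hence miss $F_{\max}$), but they are \emph{not} open in $F_{\max}$ in general: being ``open as the top stratum of its closure'' is far weaker than being open in the ambient space. For a plain illustration, let $\mathbb G_m$ act on $\mathbb A^2\setminus\{0\}$ by scaling; every orbit is one-dimensional and closed, yet none is open, and connectedness of a source mapping into this space does not single out an orbit. So $F_{\max}$ is not a disjoint union of clopen orbits, and your connectedness step does not go through. (The alternative you sketch via generic constancy of the algebraic monodromy group is plausible but imports a nontrivial fact not supplied here.)

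The repair is shorter than your detour, and is exactly the paper's argument. Apply your periodicity observation not to $GL(V)$-invariant rational functions on ${\rm Gr}(d_W,W)$ but to those on $Y$ itself: since $\gamma$ is dominant onto $Y$ by construction, $f\circ\gamma$ is a well-defined rational function on $\Bbb C^n$ for every $f\in\Bbb C(Y)^{GL(V)}$, and the same reasoning (a $\Bbb Z^n$-periodic rational function on $\Bbb C^n$ is constant; equivalently, a function rational both in $\bold s$ and in $e^{2\pi i\bold s}$ is constant) forces $f\circ\gamma$ to be constant. Hence $\Bbb C(Y)^{GL(V)}=\Bbb C$, and Rosenlicht's theorem applied to the $GL(V)$-action on the irreducible variety $Y$ (rational invariants separate generic orbits) immediately yields a unique dense orbit $Y^\circ$. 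No fiber $F$, no dimension strata, no connectedness argument is needed.
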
 

\begin{proof} By Rosenlicht's theorem in invariant theory (\cite{PV}, 2.3), rational invariants separate generic orbits. Thus it suffices to show that $\Bbb C(Y)^{GL(V)}=\Bbb C$. Let $F$ be a $GL(V)$-invariant rational function on $Y$. Then $F\circ \gamma$ is a rational function of $\bold s\in \Bbb C^n$. On the other hand, since $\nabla$ has regular singularities, we have $\gamma(\bold s)=W^{\pi_1(X,\bold x_0)}$, where $\pi_1(X,\bold x_0)$ 
acts on $W$ through the monodromy representation $\rho_{\bold s}$. Thus by Theorem \ref{permoncrit}, $F\circ \gamma$ is also rational in $e^{2\pi i\bold s}$. This implies the statement, since a function simultaneously rational in $\bold s$ and $e^{2\pi i\bold s}$ must be constant. 
\end{proof} 

Let $\mathcal U(\nabla,W)$ be the Zariski open set of $\bold s\in \Bbb C^n$ such that 
$\gamma(\bold s)$ is defined and belongs to $Y^\circ$. Thus for any two 
elements $\bold s,\bold s'\in \mathcal U(\nabla,W)$, 
the corresponding subspaces $\gamma(\bold s)$ and 
$\gamma(\bold s')$ are conjugate under the action of $GL(V)$. 

\begin{corollary}\label{coo1} Every codimension 1 component $H$ of the complement 
$\mathcal U(\nabla,W)^c$ is a hyperplane contained in $\mathcal Z$. 
\end{corollary}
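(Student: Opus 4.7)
The plan is to follow the same template as the proofs of Theorems \ref{permoncrit1} and \ref{badval}: I combine Theorem \ref{permoncrit}, which says the orbit of the monodromy representation is controlled by $e^{2\pi i\bold s}$, with Theorem \ref{Zil}, which converts ``$E$-image lies in a hypersurface'' into ``$H$ is a hyperplane over $\Bbb Q$ and $E(H)$ is a hypertorus''.

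Let $H$ be a codimension $1$ irreducible component of $\mathcal U(\nabla,W)^c$. First I will observe that $H\not\subset T(\nabla_W)$, since $T(\nabla_W)$ has codimension at least $2$ in $\Bbb C^n$ (Subsection \ref{julo}); therefore on a Zariski dense open subset of $H$ the rational map $\gamma=\gamma(\nabla_W)$ is regular and $\gamma(\bold s)\in Y\setminus Y^\circ$. Next, for $\bold s\notin J(\nabla_W)$ with $e^{2\pi i\bold s}\in U$, Theorem \ref{permoncrit} identifies $\rho_{\bold s}$ with $\eta|_{e^{2\pi i\bold s}}$ up to $GL(V)$-conjugation, so the $GL(V)$-orbit of $\gamma(\bold s)=W^{\rho_{\bold s}}$ is determined by $e^{2\pi i\bold s}$. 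Letting $U^\circ\subset U$ denote the Zariski open subset where this orbit equals $Y^\circ$, I will conclude that for generic $\bold s\in H$ either $e^{2\pi i\bold s}\in Z$ or $e^{2\pi i\bold s}\in U\setminus U^\circ$, so that $E(H)$ lies in the proper closed subset $Z\cup\overline{U\setminus U^\circ}\subset(\Bbb C^\times)^n$. By Theorem \ref{Zil}, $H$ is then a translate of a hyperplane defined over $\Bbb Q$ and $E(H)$ is an affine hypertorus.

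It remains to upgrade this to $E(H)\subset Z_{\mathrm{ht}}$. I propose to shrink $U$ at the outset to $U\cap U^\circ$ (still nonempty because $Y^\circ$ is the dense orbit in $Y$); this eliminates the alternative $e^{2\pi i\bold s}\in U\setminus U^\circ$ and forces $E(H)\subset Z$. The hypertorus $E(H)$ is then an irreducible codimension $1$ subvariety of the divisor $Z$, so it coincides with one of the hypertorus components of $Z$, and hence $E(H)\subset Z_{\mathrm{ht}}$ and $H\subset\mathcal Z$.

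The main obstacle will be justifying that this shrinkage is compatible with the fixed definition of $\mathcal Z$: one must verify that any codimension $1$ components added to $Z$ upon replacing $U$ by $U\cap U^\circ$ are already hypertori lying in the original $Z_{\mathrm{ht}}$. Morally this should hold because orbit drops of $W^\eta$ along codimension $1$ loci in $U$ are monodromy-theoretic events controlled by the very singularities of $\eta$ that define $Z$. A cleaner alternative that sidesteps this subtlety is to apply Theorem \ref{permoncrit1} to a suitable auxiliary quasi-motivic pencil $\nabla_{W'}$: choosing $W'$ via a Pl\"ucker-type construction (e.g.\ $W'=\Lambda^{d_W}W$, possibly tensored with endomorphism or dual representations) so that the drop of $\gamma(\bold s)$ from $Y^\circ$ to a smaller orbit forces a jump in $\dim W'^{\rho_{\bold s}}$, one would have $H\subset J(\nabla_{W'})$, and Theorem \ref{permoncrit1} applied to $\nabla_{W'}$ would then immediately yield $H\subset\mathcal Z$.
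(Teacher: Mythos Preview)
Your approach is the same as the paper's. The paper's entire proof reads: ``Since the membership of $\bold s$ in $\mathcal U(\nabla,W)$ depends only on $\rho_{\bold s}$, $E(H)$ is contained in $Z$. So by Theorem \ref{Zil} it is contained in $Z_{\rm ht}$ and the statement follows.'' This is exactly your template of combining Theorem~\ref{permoncrit} with Theorem~\ref{Zil}.

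The subtlety you flag --- that a priori $E(H)\subset Z\cup\overline{U\setminus U^\circ}$ rather than $E(H)\subset Z$, so one must shrink $U$ --- is real, and the paper simply glosses over it (the same leap is made already in its proof of Theorem~\ref{permoncrit1}). Your concern about whether the shrinkage is ``compatible with the fixed definition of $\mathcal Z$'' is, however, over-cautious: the set $U$ in Theorem~\ref{permoncrit} is not canonical, only asserted to exist, so $Z=U^c$ and $\mathcal Z=E^{-1}(Z_{\rm ht})$ are to be read as taken for some sufficiently small $U$. With that convention your first fix (replace $U$ by $U\cap U^\circ$) is exactly right and finishes the proof; the alternative route through an auxiliary $W'$ and Theorem~\ref{permoncrit1} would also work but is more elaborate than what the paper does or needs.
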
 

\begin{proof}
Since the membership 
of $\bold s$ in $\mathcal U(\nabla,W)$ depends only on $\rho_{\bold s}$, 
$E(H)$ is contained in $Z$. So by Theorem \ref{Zil} it is contained in $Z_{\rm ht}$ and the statement follows. 
\end{proof} 

Now consider the special case $W=V\otimes V^*$. In this case 
$\gamma(\bold s)$, when defined, is the {\bf reduced endomorphism algebra} 
${\rm End}_{\rm red}(\nabla(\bold s))$. If $\bold s\notin J(\nabla)$, it coincides 
with the usual endomorphism algebra ${\rm End}(\nabla(\bold s))$, 
otherwise it is a subalgebra in ${\rm End}(\nabla(\bold s))$ obtained as the limit 
of the endomorphism algebra from the generic locus. 

\begin{corollary}\label{coo2} 
For values of $\bold s$ outside finitely many hyperplanes contained in $\mathcal Z$ and a Zariski closed subset of $\Bbb C^n$ of codimension $\ge 2$, the 
reduced endomorphism algebra ${\rm End}_{\rm red}(\nabla(\bold s))$ together with its action on $V$ does not depend on $\bold s$ up to an isomorphism. 
\end{corollary}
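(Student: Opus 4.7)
The plan is to apply the framework developed in this subsection to the $GL(V)$-module $W := V \otimes V^* \cong {\rm End}(V)$. Under this standard identification, for those $\bold s$ where the rational map is defined, $\gamma(\bold s)$ is by construction the reduced endomorphism algebra ${\rm End}_{\rm red}(\nabla(\bold s))$ viewed as a subspace of ${\rm End}(V)$; and the natural tensor-product action of $GL(V)$ on $V \otimes V^*$ translates into the conjugation action $g \cdot T = g T g^{-1}$ on ${\rm End}(V)$, which acts by algebra automorphisms and preserves the tautological representation on $V$ up to isomorphism.

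Next, I would invoke Corollary \ref{coo1} for this choice of $W$: every codimension $1$ irreducible component of the Zariski closed set $\mathcal U(\nabla,W)^c$ is a hyperplane contained in $\mathcal Z$. Since any Zariski closed subset of $\Bbb C^n$ has only finitely many irreducible components, there are only finitely many such hyperplanes; the remaining components of $\mathcal U(\nabla,W)^c$ assemble into a Zariski closed subset of $\Bbb C^n$ of codimension $\ge 2$. Denote the union of these two pieces by $\Sigma$; outside $\Sigma$ every $\bold s$ lies in $\mathcal U(\nabla,W)$.

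Finally, for any such $\bold s \notin \Sigma$, Theorem \ref{densor} ensures that $\gamma(\bold s)$ lies in the unique dense $GL(V)$-orbit $Y^\circ$. Hence for any two points $\bold s,\bold s'\notin \Sigma$ there exists $g\in GL(V)$ with $g \cdot {\rm End}_{\rm red}(\nabla(\bold s)) \cdot g^{-1} = {\rm End}_{\rm red}(\nabla(\bold s'))$. Conjugation by $g$ then furnishes an algebra isomorphism between the two reduced endomorphism algebras, while $g$ viewed as an automorphism of $V$ simultaneously intertwines their tautological actions on $V$; this is precisely the required isomorphism of pairs $(A,V)$.

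There is no genuine obstacle here — the argument is a direct assembly of Theorem \ref{densor} and Corollary \ref{coo1} applied to $W=V\otimes V^*$. The only step that requires care is the bookkeeping in the first paragraph: verifying that $GL(V)$-conjugacy of subspaces of $V\otimes V^*$ really corresponds to simultaneous conjugation of subalgebras of ${\rm End}(V)$ together with their action on $V$. Once this identification is made explicit, the corollary follows immediately.
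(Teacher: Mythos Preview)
Your proof is correct and follows exactly the paper's approach: the paper's proof consists of the single sentence ``It suffices to apply Corollary \ref{coo1} to $W=V\otimes V^*$,'' and you have simply unpacked this in detail, spelling out the identification of the $GL(V)$-action with conjugation on ${\rm End}(V)$ and the decomposition of $\mathcal U(\nabla,W)^c$ into hyperplanes plus a codimension $\ge 2$ piece.
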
 

\begin{proof} It suffices to apply Corollary \ref{coo1} to $W=V\otimes V^*$.  
\end{proof} 

Let us regard ${\rm Aut}_{\rm red}(\nabla(\bold s))={\rm End}_{\rm red}(\nabla(\bold s))^\times$ as a group scheme over $\mathcal U(\nabla,V\otimes V^*)$. 
We can then project the shift operators $A_j(\bold s)$ to rational sections $A_{j\bullet}(\bold s)$ of the bundle over $\mathcal U(\nabla,V\otimes V^*)$ 
with fiber $GL(V)/{\rm Aut}_{\rm red}(\nabla(\bold s))$. It is clear that 
$A_{j\bullet}(\bold s)$ does not depend on the choice of $A_j$. 
Let $\mathcal Z_{j\bullet}$ be the pole divisor of $A_{j\bullet}$. 
We obtain the following generalization of Theorem \ref{badval}(ii). 

\begin{corollary}\label{coo3} We have $\mathcal Z_{j\bullet}\subset \mathcal Z$. 
\end{corollary}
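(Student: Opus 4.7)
The corollary is a direct generalization of Theorem \ref{badval}(ii), with the quotient $PGL(V) = GL(V)/\text{scalars}$ replaced by $GL(V)/\mathrm{Aut}_{\mathrm{red}}(\nabla(\bold s))$. My strategy is to run the same argument, replacing the input ``$\mathrm{End}_{\overline K}\eta = \overline K$'' (trivial endomorphisms) used in Theorem \ref{badval}(ii) by the weaker statement that $\mathrm{End}_{\overline K}\eta$ is the fixed reduced endomorphism algebra, which holds on $\mathcal U(\nabla, V\otimes V^*)$ by Corollary \ref{coo2}.

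Concretely, I would set $\mathcal U^\star := \mathcal U(\nabla, V\otimes V^*) \cap E^{-1}(U)$, where $U$ is the open set from Theorem \ref{permoncrit}. First I would observe that every codimension-$1$ component of the complement $(\mathcal U^\star)^c$ lies in $\mathcal Z$: those coming from $E^{-1}(Z)$ do so by Theorem \ref{Zil} (as in the proof of Theorem \ref{badval}(i)), and those coming from $\mathcal U(\nabla, V\otimes V^*)^c$ do so by Corollary \ref{coo1} applied to $W = V\otimes V^*$. Hence it suffices to show that $A_{j\bullet}$ is regular on $\mathcal U^\star$, for then $\mathcal Z_{j\bullet}\subset (\mathcal U^\star)^c$ and the result follows on taking codimension-$1$ components.

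To show regularity on $\mathcal U^\star$, I would apply Proposition \ref{meromo}(i) to obtain a meromorphic lift $\widehat A_j$ on $\widetilde{\mathcal U}$ of the isomorphism $\rho_{\bold s}\cong \rho_{\bold s+\bold e_j}$ supplied by the Galois-stable $\eta$ of Theorem \ref{permoncrit}, and then project to the $GL(V)/\mathrm{Aut}_{\mathrm{red}}$-bundle over $\mathcal U(\nabla, V\otimes V^*)$. This projection absorbs both the ambiguity in the choice of $\widehat A_j$ (composition with an automorphism) and the Galois ambiguity of the cover $\widetilde{\mathcal U}\to \mathcal U$, since over $\mathcal U(\nabla, V\otimes V^*)$ the generic endomorphism algebra coincides, up to $GL(V)$-conjugation, with $\mathrm{Aut}_{\mathrm{red}}$. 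Hence the projection $\widehat A_{j\bullet}$ descends to a meromorphic section on $\mathcal U^\star$ and, by construction, agrees there with $A_{j\bullet}$. Finally, a removable-singularity argument analogous to Proposition \ref{meromo}(ii)---now using that on $\mathcal U^\star$ the isomorphism type of the pair $(\rho_{\bold s}, \mathrm{End}_{\mathrm{red}}(\nabla(\bold s)))$ is locally constant, so $\mathrm{Iso}(\rho_{\bold s}, \rho_{\bold s+\bold e_j})$ is an analytic $\mathrm{Aut}_{\mathrm{red}}$-torsor admitting local holomorphic sections---upgrades $\widehat A_{j\bullet}$ from meromorphic to holomorphic on $\mathcal U^\star$.

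The main obstacle I expect is the final removable-singularity step: unlike $PGL(V)$, the quotient $GL(V)/\mathrm{Aut}_{\mathrm{red}}$ is not projective, so local boundedness does not immediately give extension across a codimension-$1$ locus. One must use Corollary \ref{coo2} to ensure that the $\mathrm{Aut}_{\mathrm{red}}$-torsor varies holomorphically across $\mathcal U^\star$, so that the local trivializations providing holomorphic representatives of $\widehat A_{j\bullet}$ are themselves well-behaved at the analytic boundary. In particular, one has to check that the potential degeneration of the torsor along the jumping locus $J(\nabla_W) \subset \mathcal U(\nabla,V\otimes V^*)^c$ (where $\mathrm{Aut}$ properly contains $\mathrm{Aut}_{\mathrm{red}}$) is absorbed in the reduction to $\mathcal U^\star$; this is the only genuinely new analytic input beyond what already appears in the proof of Theorem \ref{badval}(ii).
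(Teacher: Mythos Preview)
Your proposal is correct and follows essentially the same approach as the paper: the paper's proof of Corollary~\ref{coo3} simply says ``using the same argument as in the proof of Theorem~\ref{badval}(ii), one establishes that $E(\mathcal Z_{j\bullet})\subset Z$; thus by Theorem~\ref{Zil}, $E(\mathcal Z_{j\bullet})\subset Z_{\rm ht}$,'' and what you have written is precisely an unpacking of what ``the same argument'' means in this more general setting. Your explicit introduction of $\mathcal U^\star = \mathcal U(\nabla, V\otimes V^*)\cap E^{-1}(U)$ and your invocation of Corollary~\ref{coo1} for $W=V\otimes V^*$ to handle the additional boundary components are exactly the details the paper leaves implicit, and your flagging of the removable-singularity step (the quotient $GL(V)/\mathrm{Aut}_{\mathrm{red}}$ not being projective) is a legitimate subtlety that the paper does not spell out.
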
 
 
\begin{proof} Using the same argument as in the proof of Theorem \ref{badval}(ii), one establishes that $E(\mathcal Z_{j\bullet})\subset Z$. Thus by Theorem \ref{Zil}, 
$E(\mathcal Z_{j\bullet})\subset Z_{\rm ht}$, so the result follows. 
\end{proof}  

\subsection{Semisimplicity loci of quasi-motivic pencils}  
Another similar result is the following theorem, which is proved analogously to the above results by applying 
Theorem \ref{Zil}. Let ${\rm Semis}(\nabla)\subset \Bbb C^n$
be the subset of points $\bold s$ where $\nabla(\bold s)$ is semisimple. 
This means that in some basis $\nabla(\bold s)$ is block-diagonal with blocks of some sizes $N_1,...,N_p$ (where $N_1+...+N_p=\dim V$), but it is not block-triangular with $p+1$ blocks in any basis. So similarly to Subsection \ref{julo}, one shows that the complement
${\rm Semis}(\nabla)^c$ is a countable union of locally closed subsets of $\Bbb C^n$. 

\begin{theorem} \label{permoncrit3} Suppose that $\nabla$ is generically semisimple (i.e., semisimple over 
$\Bbb C(\bold s)$). Then the closure of every codimension $1$ irreducible locally closed subset of ${\rm Semis}(\nabla)^c$ is a hyperplane contained in $E^{-1}(T)$, where $T$ is a union of affine hypertori in $(\Bbb C^\times)^n$. 
\end{theorem}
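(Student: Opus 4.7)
The strategy parallels the proof of Theorem \ref{permoncrit1}: transfer the question from ${\rm Semis}(\nabla)^c \subset \Bbb C^n$ to a question about a subvariety of $(\Bbb C^\times)^n$ via Theorem \ref{permoncrit}, and then apply Theorem \ref{Zil}.

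First I would reduce to monodromy: since $\nabla$ has regular singularities, the Riemann--Hilbert correspondence implies that $\nabla(\bold s)$ is semisimple if and only if $\rho_{\bold s}$ is semisimple, so membership of $\bold s$ in ${\rm Semis}(\nabla)^c$ depends only on the isomorphism class of $\rho_{\bold s}$. Using Theorem \ref{permoncrit}, I would choose $U$, the Galois cover $p:\widetilde U\to U$, and the Galois-stable family $\eta$ so that $\rho_{\bold s}\cong \eta|_{e^{2\pi i \bold s}}$ whenever $e^{2\pi i \bold s}\in U$. Generic semisimplicity of $\nabla$ means $\eta$ is semisimple at the generic point of $\widetilde U$, so the non-semisimplicity locus of $\eta$ is a proper constructible subset of $\widetilde U$. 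After shrinking $U$ by removing from it the (Zariski closed, since $p$ is finite) image of the Zariski closure of this locus, I may assume that $\eta|_{\widetilde{\bold a}}$ is semisimple for every $\widetilde{\bold a}\in \widetilde U$.

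With this choice $\rho_{\bold s}$ is semisimple whenever $e^{2\pi i \bold s}\in U$, so ${\rm Semis}(\nabla)^c \subset E^{-1}(Z)$ where $Z:=U^c$. Now let $H$ be a codimension $1$ irreducible locally closed subset of ${\rm Semis}(\nabla)^c$, with Zariski closure $\bar H$ in $\Bbb C^n$, an irreducible hypersurface. Then $\bar H\subset E^{-1}(Z)$, and since $E$ is a local biholomorphism $E(\bar H)$ is an irreducible subset of $(\Bbb C^\times)^n$ contained in a proper algebraic hypersurface. Theorem \ref{Zil} then forces $\bar H$ to be a translate of a hyperplane defined over $\Q$, and $E(\bar H)$ to be an affine hypertorus. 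Taking $T$ to be the union of these hypertori as $H$ ranges over all such codimension $1$ subsets completes the argument.

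The main obstacle is the shrinkage of $U$ in the first step: semisimplicity is not in general a Zariski open condition on a family of representations. To justify the step, I would stratify $\widetilde U$ by the dimension of the subalgebra $A(\widetilde{\bold a}) \subset {\rm End}(V)$ generated by $\eta|_{\widetilde{\bold a}}$ and by the dimension of its Jacobson radical, exactly as in the stratification already used to show that ${\rm Semis}(\nabla)^c$ is a countable union of locally closed subsets of $\Bbb C^n$. This exhibits the non-semisimplicity locus of $\eta$ as a constructible subset of $\widetilde U$ whose Zariski closure is proper (by generic semisimplicity), and its image under the finite map $p$ can be removed from $U$ to effect the shrinkage. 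Once this is established, the rest of the argument is a direct combination of Theorem \ref{permoncrit} and Theorem \ref{Zil}, in the spirit of Theorems \ref{permoncrit1} and \ref{badval}.
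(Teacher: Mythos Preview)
Your proposal is correct and is precisely the argument the paper intends: the paper gives no detailed proof of Theorem \ref{permoncrit3}, stating only that it ``is proved analogously to the above results by applying Theorem \ref{Zil},'' and your write-up fills in exactly those details (Riemann--Hilbert to pass to $\rho_{\bold s}$, shrink $U$ so that $\eta$ is semisimple on all of $\widetilde U$, then invoke Theorem \ref{Zil}). One small point you pass over: to get $\bar H\subset E^{-1}(Z)$ from $H\subset E^{-1}(Z)$ you should note, as in the proof of Theorem \ref{badval}(i), that $E^{-1}(Z)$ is Euclidean-closed and that the Zariski and Euclidean closures of the constructible set $H$ agree.
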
 

\begin{remark} We will see in Section 5 that many natural examples of quasi-motivic pencils 
are generically semisimple.
\end{remark} 

\begin{remark} In general, the kind of result similar to Theorem \ref{permoncrit3} holds (with the same proof) for any property of $\nabla$ that is algebraic in both de Rham and Betti realizations and preserved by the Riemann-Hilbert correspondence. 
\end{remark} 

\subsection{Motivic families} 
An important class of quasi-motivic families is {\bf motivic families}. 
To define them, let us introduce some notation. Let $Y$ be a smooth complex variety and 
 $\Phi_1,...,\Phi_n$
 non-vanishing regular functions on $Y$.
 Denote by  $\mathcal L(\Phi_1,...,\Phi_n,\bold s)$
 the de Rham local system on $Y$ 
generated by  the multivalued function $\prod_{j=1}^n \Phi_j^{s_j}$.

Let $\nabla(\bold s)$ be a family of flat connections on an irreducible affine complex variety $X$.

\begin{definition} We say that $\nabla$ is {\bf motivic} if there exists an irreducible variety $Y$, a smooth morphism $\pi: Y\to X$, and non-vanishing regular functions $\Phi_1,...,\Phi_n$ on $Y$ such that on some dense open set $X^\circ \subset X$, the connection $\nabla(\bold s)$ is isomorphic  to the Gauss-Manin connection 
$\pi_*\mathcal L(\Phi_1,..,\Phi_n,\bold s)$ 
on $H^i(\pi^{-1}(\bold x),\mathcal L(\Phi_1,..,\Phi_n,\bold s))$ for Zariski generic $\bold s$ and
some $i$.
\end{definition} 

For example, if $\pi$ is affine, then in informal terms this means that the local flat sections $F(\bold x)$ of $\nabla(\bold s)$ can be obtained by integrating the function $\prod_{j=1}^n \Phi_j^{s_j}(\bold y)$ against a regular differential form $\omega$ on $Y$  over a twisted singular cycle $C_{\bold x}$ in the fiber $Y_{\bold x}=\pi^{-1}(\bold x)$ of (real) dimension $\dim Y_{\bold x}-i$, varying continuously with $\bold x$: 
$$
F(\bold x)=\int_{C_{\bold x}} \prod_{j=1}^n \Phi_j^{s_j}\omega. 
$$ 

Let $K=\Bbb Q(\bold q)$. We say that a quasi-motivic family $\nabla$ on a complex variety $X$ has {\bf algebraic monodromy} if there exists a representation $\eta: \pi_1(X,\bold x_0)\to GL_N(\overline K)$ such that 
the monodromy representation $\rho_{\bold s}$ of $\nabla(\bold s)$ is isomorphic to $\eta|_{\bold q=e^{2\pi i\bold s}}$
for Zariski generic $e^{2\pi i\bold s}\in (\Bbb C^\times)^n$, 
and 
{\bf rational monodromy} if $\eta$ can be chosen to land in $GL_N(K)$. 
In light of Theorem \ref{permoncrit}, algebraic monodromy is just the requirement that 
the numerical coefficients of $\eta$, which are a priori complex numbers, actually belong to $\overline{\Bbb Q}$. 

\begin{proposition}\label{motqmot} Every motivic family $\nabla$ is quasi-motivic and has rational monodromy.
\end{proposition}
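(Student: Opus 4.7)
The plan is to split the proposition into three independent claims: (a) $\nabla$ is periodic, (b) $\nabla$ has regular singularities, and (c) the monodromy is rational. Together (a) and (b) give the ``quasi-motivic'' part by definition, so no appeal to Theorem \ref{qmper} is strictly needed; nevertheless periodicity will come for free from an explicit construction of shift operators, which is conceptually the main content.

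The key construction is on the total space $Y$. For each $j$, multiplication by the nonvanishing regular function $\Phi_j$ is a morphism of rank $1$ local systems
\[
m_{\Phi_j}:\mathcal L(\Phi_1,\dots,\Phi_n,\bold s)\xrightarrow{\ \sim\ }\mathcal L(\Phi_1,\dots,\Phi_n,\bold s+\bold e_j),
\]
since the generator $\prod_i\Phi_i^{s_i}$ of the left-hand side is sent to $\Phi_j\prod_i\Phi_i^{s_i}=\prod_i\Phi_i^{s_i+\delta_{ij}}$, which generates the right-hand side, and multiplication by an invertible regular function intertwines the two connections $d-\sum_i s_i\, d\log\Phi_i$ and $d-\sum_i(s_i+\delta_{ij})\, d\log\Phi_i$. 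Applying $R^i\pi_*$ produces an isomorphism of Gauss-Manin connections $\nabla(\bold s)\cong \nabla(\bold s+\bold e_j)$ over $X^\circ$, whose matrix in any trivialization gives a shift operator $A_j\in GL_N(\Bbb C(\bold s)[X])$ (rationality on $X$ follows since $R^i\pi_*$ is an algebraic functor and the morphism is algebraic). This settles (a). For (b), by Hironaka's resolution of singularities applied to $Y\hookrightarrow\overline Y$ with $\overline Y\setminus Y$ a normal crossing divisor, the form $\sum_j s_j\,d\log\Phi_j$ extends to a logarithmic $1$-form on $\overline Y$ (since each $\Phi_j$ is invertible on $Y$, its zeros and poles lie in the boundary), so $\mathcal L(\Phi_1,\dots,\Phi_n,\bold s)$ has regular singularities on $\overline Y$. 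The classical theorem of Deligne on preservation of regular singularities under proper (or more generally algebraic) pushforward then implies that $\nabla(\bold s)=R^i\pi_*\mathcal L$ has regular singularities on $X$.

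For the rational monodromy (c), the Betti incarnation of $\mathcal L(\Phi_1,\dots,\Phi_n,\bold s)$ is the character of $\pi_1(Y,\bold y_0)$ sending a loop $\gamma$ to $\prod_{j=1}^n q_j^{n_j(\gamma)}$, where $n_j(\gamma)\in\Bbb Z$ is the winding number of $\Phi_j\circ\gamma$ around $0$. In particular, this character takes values in $\Bbb Z[q_1^{\pm 1},\dots,q_n^{\pm 1}]^\times\subset K^\times$, so $\mathcal L$ is a rank $1$ local system of $K$-vector spaces on $Y$. Since higher direct image commutes with extension of scalars, $R^i\pi_*\mathcal L$ is a $K$-local system on $X^\circ$ of some rank $N$, and the monodromy representation of $\pi_1(X,\bold x_0)$ on its geometric fiber is therefore defined over $K$, i.e.\ lands in $GL_N(K)$. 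Combining this with Theorem \ref{permoncrit} (which produces a Galois-stable $\eta$ over $\overline K$ realizing $\rho_{\bold s}$ up to conjugation), one specializes $\eta$ to be this $K$-valued representation, giving rational monodromy as required.

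The only step requiring genuine care is verifying that the isomorphism $R^i\pi_*m_{\Phi_j}$ has, in an algebraic trivialization of the de Rham bundle, matrix entries that are rational in $\bold s$ and $\bold x$ (so that $A_j$ lies in $GL_N(\Bbb C(\bold s)[X])$ rather than merely the meromorphic sheaf on $X^\circ$); this is standard since algebraic de Rham cohomology with coefficients in a parameter-dependent algebraic connection yields an $\mathcal O_X\otimes\Bbb C[\bold s]$-module of finite rank whose transition functions are rational in $\bold s$. Everything else is a direct application of the three observations above.
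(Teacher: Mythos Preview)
Your argument is correct and essentially matches the paper's. The paper's proof is one sentence---``a Gauss-Manin connection always has regular singularities and its monodromy can be computed in the Betti realization''---and the explicit shift operators via multiplication by $\Phi_j$ appear immediately after the proof as a remark; you have simply unpacked both of these and reversed the order of emphasis (constructing $A_j$ directly rather than deducing periodicity from periodic monodromy via Theorem~\ref{qmper}).

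One small point: your invocation of Theorem~\ref{permoncrit} at the end of (c) is unnecessary and slightly off, since that theorem is stated only for \emph{pencils}, whereas motivic families need not be linear in $\bold s$. But you do not actually need it: the Betti computation already hands you a representation $\eta:\pi_1(X,\bold x_0)\to GL_N(K)$ with $K=\Bbb Q(\bold q)$ whose specialization at $\bold q=e^{2\pi i\bold s}$ is isomorphic to $\rho_{\bold s}$, which is precisely the definition of rational monodromy. So just drop that last sentence.
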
 

\begin{proof} This follows since a Gauss-Manin connection always has regular singularities and its monodromy can be computed in the Betti realization. 
\end{proof} 

Hence every motivic family is periodic by Theorem \ref{qmper}. 
In fact, this is also easy to see directly, as multiplication by $\Phi_j$ 
defines an isomorphism 
$$
a_k(\bold s): \mathcal L(\Phi_1,...,\Phi_n,\bold s)\cong 
\mathcal L(\Phi_1,...,\Phi_n,\bold s+\bold e_j)
$$ 
which gives rise to an isomorphism 
$A_k(\bold s)=\pi_*(a_k(\bold s))$ of the corresponding direct images. 

\begin{remark} Our results about periodic, quasi-motivic and motivic families of flat connections extend in a straightforward way to {\bf rational} families, i.e. those with coefficients in $\Bbb C(\bold s)$ rather than in $\Bbb C[\bold s]$. 
\end{remark} 

\begin{remark} We will see below that many quasi-motivic families are in fact motivic, or, more generally, can be realized as subquotients of motivic families (at least for generic $\bold s$). In fact, we do not know an example of a generically irreducible quasi-motivic family which does not have this property (possibly after shifting $\bold s$), although there are many examples of such families with no known motivic realization.  
\end{remark} 

\subsection{A speculative digression: quasi-geometric  connections} 

\subsubsection{Quasi-geometric connections} 

\begin{definition} Let $X$ be a smooth irreducible complex 
algebraic variety defined over $\overline{\Bbb Q}$. 
Let $\nabla$ be a flat connection on a vector bundle $V$ on $X$ with regular singularities. We say that $\nabla$ is {\bf quasi-geometric} if the monodromy representation of the corresponding flat holomorphic connection $\nabla_{\Bbb C}$ on $X$ viewed as a complex manifold is also defined over 
$\overline{\Bbb Q}$. 
\end{definition} 

In other words, a quasi-geometric connection is 
a flat connection with regular singularities
 defined over $\overline{\Bbb Q}$ both in the de Rham and the Betti realization. 

Note that if $X^\circ\subset X$ is a non-empty Zariski open subset and $\bold x_0\in X^\circ$ then the map 
$\pi_1(X^\circ,\bold x_0)\to \pi_1(X,\bold x_0)$ is surjective, so $\nabla$ is quasi-geometric on $X$ if and only if it is quasi-geometric on $X^\circ$. 

\begin{example} 1. A geometric connection (Subsection \ref{geoloc}) is quasi-geometric. 

2. The category of quasi-geometric connections is invariant under pullbacks, external products (hence tensor products), and duals. Thus for every $X$, we have a $\overline{\Bbb Q}$-linear Tannakian category 
$\mathcal Q\mathcal G(X)$ of quasi-geometric connections on $X$. 

3. If $\pi: Y\to X$ is a smooth morphism and $\nabla$ is a quasi-geometric connection on $Y$ then for all $i$ the $i$-th cohomology of $\pi_*\nabla$ is quasi-geometric on its smooth locus $X^\circ\subset X$, since the direct image can be computed over $\overline{\Bbb Q}$ both in the de Rham and the Betti realization. 

4. A simple composition factor of a quasi-geometric connection is quasi-geometric. 

5. An irreducible {\bf rigid} connection (i.e., one determined up to finitely many choices by conjugacy classes of its local monodromies) is quasi-geometric, since it is deformation-theoretically rigid 
both in the de Rham and the Betti realization. Note that for $X=\Bbb P^1\setminus S$ where $S\subset \overline{\Bbb Q}$ is a finite set, N. Katz showed 
that rigid connections are geometric by 
using the method of middle convolution (\cite{K}). 
\end{example}

The relevance of the notion of a quasi-geometric connection
to this paper is exhibited by the following proposition, whose proof is straightforward. 

\begin{proposition}\label{qgeo} If $\nabla$ is a quasi-motivic family of flat connections on $X$  defined over $\overline{\Bbb Q}$ with algebraic monodromy, then for $\bold s\in \Bbb Q^n$, $\nabla(\bold s)$ is quasi-geometric. 
\end{proposition}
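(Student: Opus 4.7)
The plan is to verify the two defining requirements of a quasi-geometric connection for $\nabla(\bold s)$: namely, that its de Rham realization is defined over $\overline{\Bbb Q}$ and has regular singularities, and that its Betti monodromy is also defined over $\overline{\Bbb Q}$.

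The first requirement is essentially immediate. Since $\nabla$ is a family of flat connections whose coefficient $B(\bold s)\in \Omega^1(X)\otimes \mathrm{End}(V)[\bold s]$ is polynomial in $\bold s$ with coefficients defined over $\overline{\Bbb Q}$, specializing at any $\bold s\in \Bbb Q^n\subset \overline{\Bbb Q}^n$ produces a connection $\nabla(\bold s)$ defined over $\overline{\Bbb Q}$. The regular singularity property is inherited from the quasi-motivic hypothesis, which imposes it on every member of the family.

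For the Betti requirement I would invoke the algebraic monodromy hypothesis. By assumption, there exists $\eta: \pi_1(X,\bold x_0)\to GL_N(\overline K)$ with $K=\Bbb Q(\bold q)$ such that $\rho_{\bold s}\cong \eta|_{\bold q=e^{2\pi i\bold s}}$ for Zariski generic $e^{2\pi i\bold s}$. Reading the proof of Theorem \ref{permoncrit} (and the underlying Theorem \ref{holfam}) over $\overline{\Bbb Q}$ in place of $\Bbb C$, one may realize $\eta$ as a regular $GL_N$-valued representation over $\overline{\Bbb Q}[\widetilde U]$, where $p: \widetilde U\to U$ is a finite unramified Galois cover of an affine open $U\subset (\Bbb G_m)^n$ defined over $\overline{\Bbb Q}$. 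For $\bold s\in \Bbb Q^n$ the point $e^{2\pi i\bold s}$ is a tuple of roots of unity, hence an $\overline{\Bbb Q}$-point of $(\Bbb G_m)^n$; picking an $\overline{\Bbb Q}$-preimage $\widetilde{\bold a}\in \widetilde U$ yields a specialization $\eta|_{\widetilde{\bold a}}\in GL_N(\overline{\Bbb Q})$ conjugate over $\Bbb C$ to $\rho_{\bold s}$, which is the desired conclusion.

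The only subtlety is dealing with rational $\bold s$ for which $e^{2\pi i\bold s}$ falls on the discriminant locus $Z=U^c$, where the direct specialization of $\eta$ is not available. The cleanest way to handle this is to work with characters: the functions $\bold s\mapsto \mathrm{tr}\,\rho_{\bold s}(g)$ for $g\in \pi_1(X,\bold x_0)$ are holomorphic on $\Bbb C^n$ and, by the generic identification with traces of $\eta$, coincide on a Zariski dense set (hence everywhere) with algebraic functions of $e^{2\pi i\bold s}$ that take $\overline{\Bbb Q}$-values at roots of unity. Thus every character value $\mathrm{tr}\,\rho_{\bold s}(g)$ lies in $\overline{\Bbb Q}$ for $\bold s\in \Bbb Q^n$, and since a semisimple representation is determined up to conjugacy by its character, the semisimplification of $\rho_{\bold s}$ is conjugate to a $\overline{\Bbb Q}$-representation; a standard deformation/limit argument inside the Deligne--Simpson moduli picks out a $\overline{\Bbb Q}$-model of $\rho_{\bold s}$ itself. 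This small technical step is the only place where the otherwise straightforward specialization argument requires care.
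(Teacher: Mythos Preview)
Your first two paragraphs are exactly the argument the paper intends; the paper's own ``proof'' is the single sentence ``whose proof is straightforward,'' and what you wrote is that straightforward verification: $\nabla(\bold s)$ is over $\overline{\Bbb Q}$ with regular singularities by specialization, and for $e^{2\pi i\bold s}\in U$ the monodromy is $\eta$ evaluated at a $\overline{\Bbb Q}$-point, hence over $\overline{\Bbb Q}$. You also correctly flag a point the paper passes over in silence, namely that the definition of algebraic monodromy only controls $\rho_{\bold s}$ when $e^{2\pi i\bold s}\in U$.

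Your attempted repair for the bad locus, however, has a real gap. The character argument does show that every $\mathrm{tr}\,\rho_{\bold s}(g)$ lies in $\overline{\Bbb Q}$ for $\bold s\in\Bbb Q^n$, and hence that the \emph{semisimplification} of $\rho_{\bold s}$ is defined over $\overline{\Bbb Q}$. But the step from there to $\rho_{\bold s}$ itself is not automatic, and the phrase ``standard deformation/limit argument inside the Deligne--Simpson moduli'' does not name an actual argument. In general a representation with $\overline{\Bbb Q}$-valued character need not be defined over $\overline{\Bbb Q}$: for $\Gamma=\Bbb Z^2$ take $\rho(e_1)=I+E_{12}$ and $\rho(e_2)=I+\pi E_{12}$; the semisimplification is trivial, but the ratio $\pi$ of the nilpotent parts is a conjugation invariant, so no $\overline{\Bbb Q}$-model exists. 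To genuinely cover all $\bold s\in\Bbb Q^n$ you would need to use more structure (e.g.\ that $\rho_{\bold s_0}$ is a holomorphic limit of representations admitting $\overline{\Bbb Q}$-models, combined with a constructibility argument on the representation variety defined over $\overline{\Bbb Q}$) and spell it out. As written, your proof---like the paper's---is complete only for $\bold s$ with $e^{2\pi i\bold s}\in U$.
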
 

In particular, this means that the examples of Section 5 are quasi-geometric (we will see that they have algebraic monodromy). 

An important property of quasi-geometric connections is 
quasiunipotent monodromy at infinity. 

\begin{proposition}\label{quasiun} Let $\overline X$ be a smooth compactification of $X$ such that $\overline X\setminus X$ is a normal crossing divisor. If $\nabla$ is a quasi-geometric connection on $X$, then its monodromy around every component $D$ of $\overline X\setminus X$ is quasiunipotent. 
\end{proposition}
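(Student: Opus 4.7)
The plan is to pass to a local analysis near a smooth point of each component of $\overline{X}\setminus X$, relate the eigenvalues of the local monodromy to those of the residue of the connection, and then conclude via a transcendence result.

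First I would fix a component $D$ of $\overline{X}\setminus X$ and a smooth point $\bold p \in D$ lying on no other component. Since $\overline{X}$ and $D$ are defined over $\overline{\Bbb Q}$, I can choose an algebraic local equation $z=0$ for $D$ near $\bold p$ with $z$ defined over $\overline{\Bbb Q}$. Since $\nabla$ has regular singularities and is defined over $\overline{\Bbb Q}$, Deligne's canonical extension of $\nabla$ to a logarithmic meromorphic connection along $D$ is also defined over $\overline{\Bbb Q}$ (the extension is characterized by a normalization on the eigenvalues of the residue and hence is functorial with respect to the field of definition). In particular, the residue $\mathrm{res}_D(\nabla)$ at $\bold p$ is a matrix with entries in $\overline{\Bbb Q}$, so its eigenvalues $\lambda_1,\dots,\lambda_N$ lie in $\overline{\Bbb Q}$.

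Next I would invoke the standard local theory of regular singular connections: the eigenvalues of the local monodromy operator $M_D\in GL_N(\Bbb C)$ around $D$ at $\bold p$ are precisely $e^{-2\pi i\lambda_j}$ for $j=1,\dots,N$. By the quasi-geometric hypothesis the monodromy representation of $\nabla_{\Bbb C}$ is defined over $\overline{\Bbb Q}$, so the characteristic polynomial of $M_D$, and therefore each eigenvalue $e^{-2\pi i\lambda_j}$, also lies in $\overline{\Bbb Q}$.

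Finally I would apply the Gelfond--Schneider theorem in the following form: if $\alpha,\beta\in \overline{\Bbb Q}$, $\alpha\ne 0,1$, and some value of $\alpha^{\beta}$ is algebraic, then $\beta\in\Bbb Q$. Writing $e^{-2\pi i\lambda_j}=(-1)^{-2\lambda_j}$ with $\alpha=-1$ and $\beta=-2\lambda_j\in\overline{\Bbb Q}$, the algebraicity of the left-hand side forces $-2\lambda_j\in\Bbb Q$, i.e., $\lambda_j\in\Bbb Q$. Consequently each eigenvalue of $M_D$ is a root of unity, so $M_D$ is quasiunipotent.

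The only nontrivial input is Gelfond--Schneider; the remaining obstacle is the bookkeeping point that Deligne's canonical extension (and hence the residue) is defined over $\overline{\Bbb Q}$ when $X$ and $\nabla$ are, but this follows from the canonical nature of the construction and requires no new analytic content.
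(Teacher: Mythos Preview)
Your proof is correct and follows essentially the same route as the paper: extend $\nabla$ across $D$ to have a first-order pole, observe that the residue eigenvalues are algebraic (since the connection is defined over $\overline{\Bbb Q}$), note that the monodromy eigenvalues $e^{2\pi i\lambda_j}$ are algebraic by the Betti hypothesis, and then invoke Gelfond--Schneider to force $\lambda_j\in\Bbb Q$. The only cosmetic difference is that the paper does not insist on Deligne's canonical extension specifically---any extension with a first-order pole (which exists over $\overline{\Bbb Q}$ by the algebraic theory of regular singularities) suffices, since all such extensions differ by integer shifts of the residue eigenvalues.
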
 

\begin{proof} Let us extend the underlying vector bundle 
of $\nabla$ over $D$ so that $\nabla$ has a first order pole there (this is possible since $\nabla$ has regular singularities, \cite{De1}). Then the eigenvalues of the monodromy of $\nabla$ around $D$ are of the form $e^{2\pi is}$, where $s$ runs through the eigenvalues of the residue of $\nabla$ at $D$. Thus both $s$ and $e^{2\pi is}$ are algebraic numbers. 
Hence it follows from the theorem of Gelfond and Schneider in transcendental number theory (\cite{Ge}) that $s\in \Bbb Q$, i.e., $e^{2\pi is}$ is a root of unity, as desired. 
\end{proof} 

\begin{remark} Since every geometric connection is quasi-geometric, Proposition \ref{quasiun} 
is a generalization of the Local Monodromy Theorem in Hodge theory: monodromy at infinity of a Gauss-Manin connection is 
quasiunipotent. In fact, our proof of Proposition \ref{quasiun}
is the same as Brieskorn's proof of this theorem given at the end of \cite{De1}. 
\end{remark} 

\begin{remark} There are many examples of irreducible quasi-geometric connections without a known geometric construction, even on $X=\Bbb P^1\setminus S$. E.g., we can take the Dunkl connection \ref{ratdun} for exceptional Coxeter groups at a rational parameter $c$, restricted to a generic line in $\mathfrak h$ (not passing through $0$). However, we do not know an example of an irreducible quasi-geometric connection which is provably non-geometric. For instance, Proposition \ref{quasiun} implies that a 1-dimensional connection on $\Bbb P^1\setminus S$ which is quasi-geometric is actually geometric.  
\end{remark} 

Let $\nabla$ be a connection on $X:=\Bbb P^1\setminus S$ with regular singularities defined over $\overline{\Bbb Q}$ whose monodromies around points of $S$ are quasiunipotent (i.e., the eigenvalues of the residues are rational). In this case it is usually difficult to show that $\nabla$ is not quasi-geometric, because this entails establishing transcendence of certain numbers (such as traces of monodromy operators).
Yet we expect that the quasi-geometric property is quite rare. Namely, we make the following conjecture. 

\begin{conjecture}\label{fii} Fix a quasiunipotent Riemann symbol $P$ for $S$ (i.e., the list of eigenvalues 
of monodromies around points of $S$). Then there are finitely many quasi-geometric connections 
on $\Bbb P^1\setminus S$ with Riemann symbol $P$. 
\end{conjecture}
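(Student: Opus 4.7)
The plan is to rephrase Conjecture \ref{fii} as a bi-algebraicity statement for the Riemann--Hilbert correspondence on the stratum of connections with fixed Riemann symbol, and then to attack it via functional transcendence. First I would set up the moduli spaces. Since $S\subset\overline{\Bbb Q}$ and $P$ is quasiunipotent (its entries are rational and the exponentials are roots of unity), the de Rham moduli $M_{\rm dR}(\Bbb P^1\setminus S,P)$ of flat connections with regular singularities and Riemann symbol $P$, and the Betti character variety $M_{\rm B}(\Bbb P^1\setminus S,e^{2\pi iP})$ of representations with prescribed local monodromy conjugacy classes, are both quasi-projective varieties defined over $\overline{\Bbb Q}$. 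The Riemann--Hilbert correspondence provides a biholomorphism $\Phi\colon M_{\rm dR}(\Bbb C)\to M_{\rm B}(\Bbb C)$ which in general is transcendental. A quasi-geometric connection with Riemann symbol $P$ is precisely an $\overline{\Bbb Q}$-point $x\in M_{\rm dR}(\overline{\Bbb Q})$ with $\Phi(x)\in M_{\rm B}(\overline{\Bbb Q})$, so the conjecture becomes the assertion that the bi-algebraic locus of $\Phi$ over this stratum is finite.

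When $M_{\rm dR}$ is zero-dimensional the conjecture is immediate, and this recovers the rigid case studied by Katz \cite{K}. The genuine content lies in positive-dimensional strata. For these, the natural strategy is an Ax--Schanuel-type statement for $\Phi$: any positive-dimensional irreducible $\overline{\Bbb Q}$-subvariety $V\subset M_{\rm dR}$ whose image $\Phi(V)$ lies in an algebraic subvariety of $M_{\rm B}$ must arise from a restricted \emph{weakly special} construction (such as a product decomposition, an induction from a proper subgroup, or a finite isogeny correspondence). Combined with a Pila--Zannier style counting argument on $\overline{\Bbb Q}$-points of bounded height in $M_{\rm dR}$, this would force the bi-algebraic points to be finite.

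The main obstacle is precisely to obtain the required functional transcendence input. Ax--Schanuel theorems exist for classical period maps (Bakker--Tsimerman) and for Shimura varieties (Mok--Pila--Tsimerman), but the Riemann--Hilbert correspondence on a general character variety is not covered by these frameworks. A more hands-on alternative is to exploit Proposition \ref{quasiun}: along any hypothetical algebraic curve of quasi-geometric connections with fixed $P$, the residues at the punctures retain rational eigenvalues, so after rigidifying them one obtains a one-parameter algebraic family of connections whose monodromies are, by assumption, pointwise $\overline{\Bbb Q}$-valued on a Zariski-dense set. The crux then reduces to showing that such a family cannot have pointwise algebraic monodromy on an infinite set unless it is ``special'' in the above sense, which is a transcendence-of-periods statement for non-geometric connections where classical Hodge-theoretic tools do not apply directly. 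This last step is where I expect the real difficulty of the conjecture to reside, and I would anticipate that settling it requires genuinely new input combining Gelfond--Schneider-type transcendence with a systematic study of period integrals over cycles in the universal family on the $M_{\rm dR}$ stratum.
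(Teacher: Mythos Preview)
The statement you are addressing is labeled \emph{Conjecture} in the paper, and the paper does not supply a proof. The only supporting evidence offered there is the remark that Deligne's finiteness theorem \cite{De} settles the analogous statement for \emph{geometric} (as opposed to quasi-geometric) connections. So there is no ``paper's own proof'' to compare your proposal against.

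Your proposal is not a proof either, and you are candid about this: you outline a bi-algebraicity reformulation and a Pila--Zannier style strategy, then identify the missing ingredient as an Ax--Schanuel theorem for the Riemann--Hilbert map on the relevant stratum of the de~Rham moduli space. That reformulation is reasonable and the strategy is plausible in broad outline, but you correctly flag the decisive gap. A few additional cautions: the moduli spaces $M_{\rm dR}$ and $M_{\rm B}$ you invoke are typically only well-behaved (quasi-projective, Riemann--Hilbert a biholomorphism) on the stable or semisimple locus, whereas quasi-geometric connections in the paper's sense are not assumed semisimple; also, fixing a Riemann symbol prescribes only eigenvalues of local monodromies, not their full conjugacy classes, so the corresponding strata need not be the standard ones and may be singular or non-separated. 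Finally, even granting an Ax--Schanuel statement, the Pila--Zannier machinery would require some height-bound input on the $M_{\rm dR}$ side, and it is not clear what would play the role of the Galois-orbit lower bounds that drive the classical arguments. None of this invalidates your outline as a research direction, but it confirms that the conjecture remains genuinely open.
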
 

In particular, this would mean that already for rank $2$ and $|S|=4$, most connections with a given $P$ are 
not quasi-geometric, since such connections are parametrized by two parameters in $\overline{\Bbb Q}$. 

\begin{remark} Conjecture \ref{fii} holds for geometric connections by a result of Deligne \cite{De}, which provides 
motivation and supporting evidence for this conjecture. 
\end{remark} 
 
\subsubsection{Quasi-geometric connections and braided fusion categories} 

In spite of likely being rare among general regular connections 
with quasiunipotent local monodromies, quasi-geometric connections are ubiquitous 
in different areas of mathematics. In particular, 
we would like to propose the following conjecture, stating that 
connections arising from braided fusion categories are always quasi-geometric. 

Let $\mathcal C$ be a {\bf braided fusion category} over $\Bbb C$ (\cite{EGNO}, Definitions 4.1.1, 8.1.1). 
By the Ocneanu rigidity theorem, $\mathcal C$ is defined over $\overline{\Bbb Q}$ (\cite{EGNO}, 
Corollary 9.1.8). Thus for any objects $X_1,...,X_n,Y\in \mathcal C$, the space ${\rm Hom}(Y,X_1\otimes...\otimes X_n)$ carries an action of the pure braid group $PB_n$, which defines a Betti local system ${\mathcal L}_{X_1,...,X_n,Y}$
on the configuration space $X_n:=\Bbb C^n\setminus{\rm diagonals}$ defined over $\overline {\Bbb Q}$. 
By the Riemann-Hilbert correspondence (\cite{De1}), there is a unique up to isomorphism flat connection $\nabla_{X_1,...,X_n,Y}$ on $X_n$ with regular singularities whose monodromy is ${\mathcal L}_{X_1,...,X_n,Y}$. 

\begin{conjecture}\label{quasigeo} The connection 
$\nabla_{X_1,...,X_n,Y}$ is defined over $\overline{\Bbb Q}$. In other words, 
it is a quasi-geometric connection. 
\end{conjecture}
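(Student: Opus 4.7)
The plan is to realize $\nabla_{X_1,\ldots,X_n,Y}$ as a subquotient of a specialized Knizhnik-Zamolodchikov (KZ) connection at a rational value of the level parameter, and then invoke Proposition \ref{qgeo}. As will be discussed in Section 5, the KZ connection is a quasi-motivic pencil defined over $\Bbb Q$ with the level as its pencil parameter, and by Proposition \ref{qgeo} its specialization at any rational level is quasi-geometric. Since the class of quasi-geometric connections is closed under direct summands and simple composition factors (item 4 of the Example following the definition of quasi-geometric), the conjecture would follow once $\mathcal L_{X_1,\ldots,X_n,Y}$ is identified with a summand of the monodromy of such a specialization.

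First I would reduce to the case where $X_1,\ldots,X_n,Y$ are simple, using additivity of $\mathrm{Hom}(Y,X_1\otimes\cdots\otimes X_n)$ in each slot together with compatibility of the pure braid action with direct sum decompositions. For fusion categories arising from an affine Kac-Moody algebra $\widehat{\g}$ at positive integer level $k$ --- equivalently from the quantum group $U_q(\g)$ at the root of unity $q = e^{\pi i/(k+h^\vee)}$ --- the Kazhdan-Lusztig-Finkelberg theorem identifies $\mathcal L_{X_1,\ldots,X_n,Y}$ with the monodromy of the KZ connection on the corresponding space of conformal blocks, at the rational pencil value $\kappa = -1/(k+h^\vee)$. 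Combined with Proposition \ref{qgeo} this delivers quasi-geometricity for all fusion categories of ``quantum group origin''. The same strategy covers pointed, Tambara-Yamagami, and Ising-type categories, as well as categories built from these by equivariantization, de-equivariantization, conformal embedding, cosetting, or forming Drinfeld centers, since all of these operations preserve both the de Rham and Betti $\overline{\Bbb Q}$-structures.

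The main obstacle is extending the argument to exotic braided fusion categories not known to arise from any KZ-type or rational vertex algebra construction --- most prominently the Haagerup, extended Haagerup, and Asaeda-Haagerup modular tensor categories, and more generally any MTC outside the ``quantum group list''. For these one would need either a general realization theorem embedding every braided fusion category as a subquotient of integrable representations of some affine Kac-Moody algebra at rational level (a substantially open problem, related to the conjectural vertex-algebraic origin of every MTC), or an intrinsic algebraic construction of the braiding via Beilinson-Drinfeld factorization algebras, or a $\overline{\Bbb Q}$-rational lift of topological chiral homology, producing a flat connection with regular singularities defined over $\overline{\Bbb Q}$ whose monodromy recovers $\mathcal L_{X_1,\ldots,X_n,Y}$ directly from the categorical data.

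A secondary technical point is to verify that the $\overline{\Bbb Q}$-structure descends from the ambient KZ $D$-module to its composition factors. This should follow by standard Galois descent: if a regular holonomic $D$-module $M$ on $X$ is defined over $\overline{\Bbb Q}$, then each term of its Jordan-H\"older filtration is Galois-stable and hence also defined over $\overline{\Bbb Q}$. In combination with the preceding steps, this would reduce Conjecture \ref{quasigeo} for a given $\mathcal C$ to the question of finding a KZ-type algebraic realization of $\mathcal C$, which is where essentially all the difficulty is concentrated.
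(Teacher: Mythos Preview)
This statement is a \emph{conjecture}, and the paper does not prove it. What the paper offers is a discussion of evidence and possible approaches: Proposition~\ref{voa} establishes the conjecture when $\mathcal C$ arises from a strongly rational vertex algebra defined over $\overline{\Bbb Q}$, and the paper suggests that a de Rham analogue of Ocneanu rigidity (via the Teichm\"uller tower description of modular functors) might yield the general case. The paper explicitly names the Drinfeld center of the Haagerup category as an instance where the conjecture remains open.

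Your proposal is likewise not a proof but a strategy sketch, and you correctly identify the same obstruction: exotic fusion categories with no known KZ-type or vertex-algebra realization. Your route through the Kazhdan--Lusztig--Finkelberg equivalence together with Proposition~\ref{qgeo} is sound for categories of quantum-group origin and is essentially a special case of the paper's VOA argument (the relevant vertex algebra being the simple affine one at level $k$). The paper's framing via general strongly rational VOAs is somewhat broader, since it automatically covers lattice VOAs, $W$-algebras, orbifolds, and so on, without having to assemble them by hand from equivariantization and coset constructions. Conversely, your emphasis on closure of the quasi-geometric class under simple composition factors, and on Galois descent of the $\overline{\Bbb Q}$-structure to subquotients of the KZ $D$-module, makes explicit a technical point the paper leaves implicit. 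Both approaches stall at exactly the same place, and neither constitutes a proof of the conjecture.
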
 

If $\mathcal C$ is a modular category (\cite{EGNO}, Subsection 8.14) 
then Conjecture \ref{quasigeo} can be strengthened 
to say that the flat connections with regular singularities on the moduli 
space of curves of any genus $g$ with $n$ punctures attached to mapping class group representations 
on spaces of conformal blocks of the category $\mathcal C$ are defined over $\overline{\Bbb Q}$ 
(equivalently, are quasi-geometric). In genus $0$ this is actually equivalent to Conjecture 
\ref{quasigeo}, since the category $\mathcal C$ can be assumed modular by pivotalization (\cite{EGNO}, p.180) and taking the Drinfeld center (\cite{EGNO}, Subsection 7.13).

A possible approach to proof of Conjecture \ref{quasigeo} would be to 
establish a de Rham analog of Ocneanu rigidity, using the de Rham 
description of the notion of a modular fusion category and modular functor 
in terms of the Teichm\"uller tower, due to Deligne and Beilinson-Feigin-Mazur (see \cite{BK}, Chapter 6 and references therein). 

Some evidence for Conjecture \ref{quasigeo} is provided by the following proposition.

\begin{proposition}\label{voa} If $\mathcal C$ is the category of representations of a strongly rational vertex algebra $\mathcal V$ (in the sense of \cite{M}, p.1) defined over $\overline {\Bbb Q}$ then Conjecture \ref{quasigeo} holds for $\mathcal C$ (in the generalized form for any genus).
\end{proposition}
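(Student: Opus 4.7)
The strategy is to realize $\nabla_{X_1,\dots,X_n,Y}$ as the Knizhnik--Zamolodchikov-type connection on the bundle of conformal blocks attached to the vertex algebra $\mathcal V$, and then to observe that this connection is visibly defined over $\overline{\Bbb Q}$ when $\mathcal V$ is. By Huang's modularity theorem for strongly rational vertex algebras, $\mathcal C = \mathrm{Rep}(\mathcal V)$ is a modular tensor category whose braided structure is canonically identified with the monodromy of the projectively flat connection $\nabla^{\mathcal V}_{X_1,\dots,X_n,Y}$ on the sheaf of coinvariants over $X_n$ with generic fiber $\mathrm{Hom}(Y, X_1 \otimes \cdots \otimes X_n)$. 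In particular its monodromy representation coincides with the action of $PB_n$ defining $\mathcal L_{X_1,\dots,X_n,Y}$, so by the uniqueness part of the Riemann--Hilbert correspondence \cite{De1}, $\nabla^{\mathcal V}_{X_1,\dots,X_n,Y}$ is isomorphic to $\nabla_{X_1,\dots,X_n,Y}$ as a regular flat connection on $X_n$.

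Next I would verify descent of $\nabla^{\mathcal V}_{X_1,\dots,X_n,Y}$ to $\overline{\Bbb Q}$. The sheaf of coinvariants (Frenkel--Ben-Zvi, Tsuchiya--Ueno--Yamada) is built functorially from $\mathcal V$ and the modules $X_1,\dots,X_n,Y$; its flat connection is produced from the action of the Virasoro field via the standard formulas in local coordinates on the moduli stack. All structure constants involved are $\overline{\Bbb Q}$-rational because $\mathcal V$ is defined over $\overline{\Bbb Q}$, and the modules $X_i,Y$ may be chosen over $\overline{\Bbb Q}$ by Ocneanu rigidity (Corollary 9.1.8 of \cite{EGNO}). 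Hence $\nabla^{\mathcal V}_{X_1,\dots,X_n,Y}$ has an evident model over $\overline{\Bbb Q}$, and combined with the previous step this shows that $\nabla_{X_1,\dots,X_n,Y}$ is quasi-geometric, establishing Conjecture \ref{quasigeo} in genus $0$.

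For the stronger higher-genus assertion one runs the same argument using the full modular functor attached to $\mathcal V$ on the moduli spaces $\overline{\mathcal M}_{g,n}$, again constructed algebraically from $\mathcal V$ over $\overline{\Bbb Q}$, together with the identification of this functor with the modular functor of $\mathcal C$ viewed as a modular category. The main obstacle is precisely this last identification: in genus $\ge 1$ the equivalence between the vertex-algebraic modular functor of $\mathcal V$ and the Reshetikhin--Turaev functor built from $\mathcal C$ requires the full Huang--Lepowsky and Andersen--Ueno program (factorization, sewing, and compatibility with the categorical braiding), and this is where the analytic content of the statement is concentrated. Once this identification is granted, rationality over $\overline{\Bbb Q}$ of the de Rham side is essentially tautological from the algebraicity of the conformal-blocks construction, so the proof reduces to invoking established results about vertex algebra modular functors rather than proving anything new about them.
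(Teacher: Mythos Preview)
Your approach is essentially that of the paper: realize $\nabla_{X_1,\dots,X_n,Y}$ as the KZ-type connection on conformal blocks for $\mathcal V$ (via Huang's theorem and Riemann--Hilbert uniqueness), and observe that this connection is defined over $\overline{\Bbb Q}$ since $\mathcal V$ is; the paper's proof is a three-line sketch of exactly this.

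One small point of difference: you invoke Ocneanu rigidity to obtain $\overline{\Bbb Q}$-forms of the $\mathcal V$-modules $X_i,Y$, but Ocneanu rigidity is a statement about abstract fusion categories and does not by itself produce $\overline{\Bbb Q}$-structures on concrete VOA modules. What is actually needed on the de Rham side (and what the paper tacitly uses) is that the simple modules of a strongly rational VOA defined over $\overline{\Bbb Q}$ are themselves definable over $\overline{\Bbb Q}$, e.g.\ via Zhu's algebra. The paper reserves Ocneanu rigidity for the Betti side, where it guarantees that the monodromy representation is defined over $\overline{\Bbb Q}$ --- a fact already recorded in the discussion preceding the proposition.
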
 

\begin{proof} By Huang's theorem (\cite{H}), $\mathcal C$ is a modular category. 
Since $\mathcal V$ is defined over $\overline {\Bbb Q}$, so are the KZ  type connections on genus $0$ conformal blocks for $\mathcal V$. 
On the other hand, the monodromy of these connections is also defined
over $\overline{\Bbb Q}$ by Ocneanu rigidity (\cite{EGNO}, 
Corollary 9.1.8), as already mentioned above. This proves the proposition. 
\end{proof} 

Note that it is conjectured (see e.g. \cite{GJ}, p.2) that any modular category $\mathcal C$ is the category of representations of a (strongly) rational vertex algebra $\mathcal V$. If so, and $\mathcal V$ is defined over $\overline {\Bbb Q}$, then Conjecture \ref{quasigeo} would follow from Proposition \ref{voa}. 

We do not know if any rational vertex algebra is defined over $\overline{\Bbb Q}$ (although this seems to be true in all known examples). But we hope that this can be proved (maybe under mild assumptions) if $\mathcal V$ is deformation-theoretically rigid (has no non-trivial formal deformations), which could presumably be deduced from vanishing of a suitable second cohomology group. We do not know, however, how to prove such rigidity. 

Also, in all examples of vertex algebras we know, 
the KZ  type connections on genus $0$ conformal blocks
are Gauss-Manin connections (i.e., their local flat sections have integral representations). 
For instance, if $\mathcal V$ is the simple vertex algebra attached to the affine Kac-Moody algebra 
$\widehat{\mathfrak g}$ at level $k$ then these connections are literally the KZ  connections, whose Gauss-Manin realization (i.e., integral form of solutions) is given in \cite{SV}. 
In such cases, one may say that the quasi-geometric nature of the corresponding flat connections 
is explained by the fact that they are Gauss-Manin connections (i.e., geometric). 

Finally, we note that there are concrete examples of modular fusion categories (coming from subfactors)
for which the flat KZ-type connections attached via the Riemann-Hilbert correspondence to their braid group representations are not known explicitly (the problem of computing a regular connection with given monodromy is, in general, highly transcendental, even on $X=\Bbb P^1\setminus S$). In particular, they are not known to be Gauss-Manin connections, not even quasi-geometric ones, so Conjecture \ref{quasigeo} is open for them. The simplest such example is perhaps the Drinfeld center of the Haagerup fusion category, see \cite{GI}. This category is not known to be the representation category of a rational vertex algebra. 

\begin{remark} If a braided fusion category $\mathcal C$ is unitary (\cite{Tu}) then the representations of the pure braid group $PB_n$ on its multiplicity spaces are also unitary, thus semisimple. Hence they are also semisimple if $\mathcal C$ is Galois conjugate to a unitary category. We do not know, however, whether representations of $PB_n$ coming from braided fusion categories must be semisimple in general. This is true in the abelian case $n=2$ by the Anderson-Moore-Vafa theorem (\cite{EGNO}, 8.18), 
but nothing is known for $n>2$.
\end{remark} 

\subsection{Pseudo-pencils} Even though the pencil condition on a family 
 of flat connections $\nabla(\bold s)$ (i.e., linearity in $\bold s$) is satisfied in many interesting examples listed in the next section, it is still quite restrictive and unfortunately rules out many other important examples, for instance most motivic families. On the other hand, we have used this condition only in one place - to argue that the monodromy representation of $\nabla(\bold s)$ is exponentially bounded. It turns out that the pencil condition can be relaxed to include many more examples while retaining this property. Namely, this is accomplished by the following notion of a {\bf pseudo-pencil}, which we first define in the 1-parameter case. 
 
\begin{definition} Let $\nabla(s)$ be a 1-parameter polynomial family of flat connections on a smooth irreducible variety $X$. We say that $\nabla(s)$ is a {\bf pseudo-pencil} if after a gauge transformation over $\bold k(s)(X)$, $\nabla(s)$ takes the form $d-sB(s)$ where $B$ is regular at $s=\infty$. 
\end{definition} 

It is clear that any pencil of flat connections is a pseudo-pencil. On the other hand, pseudo-pencils are much more general. For instance, using the results of Mochizuki (\cite{Mo}), one can prove the following proposition (see e.g. \cite{Sa}, Theorem 2 and Example 3): 

\begin{proposition}\footnote{We thank V. Vologodsky for explanations regarding this proposition} Let $\pi: Y\to X$ be a smooth morphism between smooth complex varieties of relative dimension $m$ and $\Phi$ a nonvanishing regular function on $Y$ such that the map $(\pi,\Phi): Y\to X\times \Bbb G_m$ is proper. Suppose that the critical points of $\Phi$ on a generic fiber $\pi^{-1}(\bold x)$ are isolated. Then the motivic family $H^m(\pi_*\mathcal L(\Phi,s))$ (over some open subset of $X$) is a pseudo-pencil. 
\end{proposition}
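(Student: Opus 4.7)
The plan is to realize $\nabla(s)$ as the Gauss--Manin connection on the relative twisted de Rham cohomology of $\Phi$, and then to invoke the existence of a Brieskorn--Deligne--Malgrange type lattice at $s=\infty$ supplied by the work of Sabbah and Mochizuki. Concretely, the local system $\mathcal L(\Phi,s)$ is the trivial line bundle on $Y$ with connection $d-s\,d\log\Phi$, so $H^m(\pi_*\mathcal L(\Phi,s))$ is the $m$-th cohomology sheaf of the relative twisted complex $(\Omega^\bullet_{Y/X},\,d_{Y/X}-s\,d_{Y/X}\log\Phi\wedge -)$, with Gauss--Manin connection computed by the standard Katz--Oda procedure: lift a relative cocycle to an absolute form $\tilde\omega$, apply $d_Y-s\,d_Y\log\Phi\wedge$, and take the $(1,m)$-component.

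Next, I would invoke the theory of exponentially twisted holonomic $D$-modules. The properness of $(\pi,\Phi)\colon Y\to X\times\Bbb G_m$ together with the isolated critical point hypothesis implies, by \cite{Sa}, Theorem 2 and Example 3 (which builds on \cite{Mo}), that over some nonempty Zariski open $X^\circ\subset X$ the sheaf $H^m(\pi_*\mathcal L(\Phi,s))$ is, generically in $s$, locally free of finite rank over $\mathcal O_{X^\circ}$, the rank being the generic total Milnor number of $\Phi$ on the fibres of $\pi$, and admits a canonical Brieskorn-type lattice at $s=\infty$. In concrete terms, the change of variable $u=1/s$ extends the cohomology to a locally free $\mathcal O_{X^\circ}[[u]]$-module on which $u\cdot\nabla$ has only a simple pole at $u=0$, and whose leading term at $u=0$ -- the \emph{Higgs field} of the limit -- acts as multiplication by $d\log\Phi$ on the relative Jacobi ring of $\Phi$.

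Choosing a frame adapted to this lattice at $u=0$ and passing to $\Bbb C(s)(X)$ then writes $\nabla(s)=d-sB(s)$, where $B(s)$ is a rational $\mathrm{End}$-valued $1$-form on $X^\circ$ that is regular at $s=\infty$: its value at $s=\infty$ is the Higgs field above, and the subleading terms in the $1/s$ expansion correspond to the higher-order stationary-phase corrections in the period integrals $\int\Phi^s\omega$. This is precisely the pseudo-pencil property.

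The main obstacle is the second step. Fiberwise the Brieskorn lattice at $s=\infty$ is classical for an isolated critical point of a holomorphic function, but producing such a lattice globally over $X^\circ$, with locally constant rank and controlled variation in $\bold x$, is exactly what requires the properness of $(\pi,\Phi)$ -- to control critical points escaping to $\Phi=0$ or $\Phi=\infty$ -- together with Mochizuki's construction of the integrable pure twistor $D$-module attached to an exponentially twisted cohomology. Everything else, namely writing down the de Rham model and reading off the gauge, is routine once that lattice is in hand.
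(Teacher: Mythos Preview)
Your proposal is correct and follows the same route as the paper: the paper itself does not give a proof but simply refers to Mochizuki \cite{Mo} and to \cite{Sa}, Theorem~2 and Example~3, exactly as you do. Your write-up is a faithful expansion of that citation, spelling out the twisted de Rham model, the Brieskorn lattice at $s=\infty$, and the resulting gauge $\nabla(s)=d-sB(s)$ with $B$ regular at $s=\infty$.
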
 

There are also other similar situations where one can conclude that $H^m(\pi_*\mathcal L(\Phi,s))$ is a pseudopencil, 
see e.g. \cite{Sa}, Example 4. 

Yet it is clear that the monodromy of any pseudo-pencil (for $\bold k=\Bbb C$) is exponentially bounded (indeed, we may compute the monodromy in the gauge where $\nabla(s)\sim d-sB(s)$, where it is obviously exponentially bounded, but this condition does not depend on the gauge). 

This can be generalized to the multiparameter case as follows. 

\begin{definition} Let $\nabla(\bold s)$, $\bold s=(s_1,...,s_n)$ be an $n$-parameter polynomial family of flat connections on $X$. We say that $\nabla(\bold s)$ is a {\bf pseudo-pencil} if it is so with respect to each $s_i$ when the other coordinates $s_j$, $j\ne i$ are fixed. 
\end{definition} 

Again, it is easy to see that any pencil is a pseudo-pencil, but pseudo-pencils are much more general. Yet the above results can be generalized to them. 

To make this generalization, note that, as shown by the proof of Lemma \ref{complan}, the exponential boundedness assumption in this Lemma and hence in Theorem \ref{holfam} can be relaxed to the assumption of {\bf coordinatewise exponential boundedness}: we say that $f: \Bbb C^n\to V$ is coordinatewise exponentially bounded for all $i$, $f(\bold s)$ is exponentially bounded with respect to $s_i$ when $s_j$ for $j\ne i$ are fixed. 

It remains to note that the monodromy of any pseudo-pencil is coordinatewise exponentially bounded, which enables the desired generalizations. 

Finally, we remark that this discussion extends straightforwardly to the case 
when $\nabla(\bold s)$ is a rational (rather than polynomial) function of $\bold s$.  

\section{Examples of periodic pencils} 

In this section we will review numerous examples of 
pencils $\nabla(\bold s)$ of flat connections arising in representation theory and mathematical physics. All of them will be shown to be periodic, possibly after rescaling $\bold s$ by an integer factor. In most cases, this is done by showing that these pencils are quasi-motivic. 

\subsection{KZ  connections}

\subsubsection{KZ  connections for finite dimensional simple Lie algebras}\label{kz1}
Let $\g$ be a finite dimensional simple Lie algebra over $\Bbb C$ with triangular decomposition $\g=\mathfrak n_-\oplus \mathfrak h\oplus \mathfrak n_+$ and root system $R\subset \mathfrak h^*$, and let $\lbrace h_i\rbrace$ be the Chevalley basis of $\mathfrak h$. Let $\Omega\in (S^2\g)^\g$ be the Casimir tensor of $\g$ corresponding to the invariant inner product $(,)$ on $\g^*$ under which short roots of $\g$ have squared length $2$:
$$
\Omega=\sum_{i=1}^{{\rm rank}(\g)} h_i\otimes h_i^*+\sum_{\alpha\in R}e_\alpha\otimes e_{-\alpha},  
$$
where $h_i^*$ is the dual basis of $\mathfrak h$ under $(,)$ and $e_\alpha$ are suitably normalized root elements of $\g$. 
Let $V_1,...,V_r$ be representations of  $\g$ from category $\mathcal O$ with rational weights. Then for any rational weight $\mu\in \mathfrak h^*$ the weight subspace 
$(V_1\otimes...\otimes V_r)[\mu]$ is finite dimensional. The KZ  connection 
on $X=\Bbb C^r\setminus {\rm diagonals}$ with values in $(V_1\otimes...\otimes V_r)[\mu]$ has the form 
\begin{equation}\label{kzconn}
\nabla(\hbar):=d-\hbar\sum_{i=1}^r\left(\sum_{j\ne i}\frac{\Omega^{ij}}{x_i-x_j}\right) dx_i, 
\end{equation} 
where 
$\Omega^{ij}$ is $\Omega$ acting in the $i$-th and $j$-th factor.  
It is easy to see that this connection has regular singularities.

By the Drinfeld-Kohno theorem (\cite{Dr},\cite{EK})\footnote{At the end of \cite{Dr}, Drinfeld claims this theorem for finite dimensional representations of $\g$, but the proof applies to the more general case of category $\mathcal O$ using the result of \cite{EK} that the Etingof-Kazhdan functorial quantization of a simple Lie algebra with its standard quasitriangular structure is isomorphic to the usual Drinfeld-Jimbo quantization (for formal $\hbar$).}, the monodromy of $\nabla(\hbar)$ for all $\hbar$ except possibly a countable set is given by $R$-matrices of the quantum group $U_q(\g)$, where $q=e^{\pi i\hbar}$. So, given that the weights of $V_j$ are rational, the monodromy depends 
on $e^{\pi i\hbar/T}$, where $T$ is the common denominator of the weights of $V_j$. 

Variants of this construction include restricting the KZ connection to singular vectors 
(i.e., $\mathfrak n_+$-invariants in $(V_1\otimes...\otimes V_r)[\mu]$) or, for $\mu=0$, $\g$-invariants. 

This example can be generalized in several directions described in the following subsections, which gives many more examples of pencils of regular flat connections that turn out to be quasi-motivic.

\subsubsection{KZ connections for Kac-Moody algebras} \label{kz2}
In \ref{kz1}, the Lie algebra $\g$ may be replaced with 
a symmetrizable Kac-Moody algebra $\g(A)$ for a generalized Cartan matrix $A$, or, more generally, with the Lie algebra $\widetilde \g(A)$ without Serre relations attached to a rational matrix $A$ (see \cite{V}). In this case, the relevant version of the Drinfeld-Kohno theorem is proved in \cite{V} (for Verma modules and integrable modules) and \cite{EK} (in general). 

\subsubsection{Multiparameter KZ connections} \label{kz3}
We may consider a multiparameter version of the pencil \ref{kz1}. For simplicity 
consider the case when $V_i=M(\lambda_i)$ are Verma modules with highest weights $\lambda_i\in \mathfrak h^*$, and $\mu=\sum_{i=1}^r \lambda_i-\beta$, where $\beta\in Q_+$. In this case, we have a natural identification
$$
(V_1\otimes...\otimes V_r)[\mu]\cong \bigoplus_{\beta_1,...,\beta_r: \sum_{i=1}^r\beta_i=\beta}\bigotimes_{i=1}^r U(\mathfrak n_-)[\beta_i].
$$
Realizing $M(\lambda)\otimes M(\mu)$ as $U(\n_-)\otimes U(\n_-)$, 
we can write the operator $\Omega$ on $M(\lambda)\otimes M(\mu)$ as 
$$
\Omega=(\lambda,\mu)+Q+\sum_k \lambda(h_k)P_k^{12}+\sum_k \mu(h_k)P_k^{21},
$$
where $Q,P_k$ do not depend on $\lambda,\mu$ and have rational coefficients. It follows that the KZ connection can be written as 
$$
\nabla=d-\hbar\sum_{i=1}^r\left(\sum_{j\ne i}\frac{(\lambda_i,\lambda_j)+Q^{ij}+\sum_k\lambda_i(h_k)(P_{k}^{ij}+P_k^{ji})}{x_i-x_j}\right)dx_i.
$$
Define the modified KZ connection by conjugating $\nabla$ by the multivalued analytic function 
$\psi_0=\prod_{i<j}(x_i-x_j)^{\hbar(\lambda_i,\lambda_j)}$: 
$$
\nabla_*=d-\hbar\sum_{i=1}^r\left(\sum_{j\ne i}\frac{Q^{ij}+\sum_k\lambda_i(h_k)(P_{k}^{ij}+P_k^{ji})}{x_i-x_j}\right)dx_i.
$$
Now we can define $\bold s$ to be the vector with coordinates 
$s_0:=\hbar$ and $s_{jk}:=\hbar \lambda_j(h_k)$, and we get 
a pencil of flat connections $\nabla_*(\bold s)$. Moreover, the Drinfeld-Kohno theorem 
(\cite{Dr},\cite{EK}) implies that the monodromy of this pencil depends on 
$q=e^{\pi i\hbar}$ and $e^{\pi is_{jk}}=q^{\lambda_j(h_k)}$. 

\begin{example}\label{sl2ex} Let $\g=\mathfrak{sl}_2$. In this case we have $M(\lambda)=\Bbb C[f]v_{\lambda}\cong \Bbb C[z]$. Thus, setting $\partial=\partial_z$, we have $\Omega=\frac{1}{2}h\otimes h+e\otimes f+f\otimes e$, so
by direct computation we obtain
$$
\Omega|_{M(\lambda)\otimes M(\mu)}=\tfrac{1}{2}\lambda\mu+\lambda(\partial \otimes z-1\otimes z\partial)+\mu(z\otimes \partial-z\partial \otimes 1)-z\partial^2\otimes 1-1\otimes z\partial^2.
$$
Thus 
$$
P=\partial \otimes z-1\otimes z\partial,\ Q=-z\partial^2\otimes 1-1\otimes z\partial^2.
$$
\end{example} 

\begin{example} Consider the multiparameter KZ pencil for $\g=\mathfrak{sl}_2$
in the vector space $(M(\lambda_1)\otimes...\otimes M(\lambda_r))[\sum_{j=1}^r \lambda_j-2]\cong \Bbb C^r$. 
Let $s_j:=\hbar\lambda_j$. Then from Example \ref{sl2ex} we get that the KZ equations for a flat section $I(\bold s,\bold x)=(I_1(\bold s,\bold x),...,I_r(\bold s,\bold x))$ of $\nabla(\bold s)$ (conjugated by $\psi_0$) have the form 
\begin{equation}\label{kze}
\frac{\partial I_j}{\partial x_i} =
s_i  \frac{I_i- I_j}{x_i-x_j},\ j\ne i; \quad
\frac{\partial I_i}{\partial x_i} 
= -\sum_{j\ne i}s_j \frac{I_i- I_j}{x_i-x_j}\,.
\end{equation}
(In this special case the KZ connection does not explicitly depend on $\hbar$ but only depends on $s_j$, because $Q=0$). 
We have a $\nabla(\bold s)$-invariant decomposition $\Bbb C^r=\Bbb C\bold 1\oplus V_{\bold s}$, where $\bold 1=(1,...,1)$ and $V_{\bold s}$ is the orthogonal complement to $\bold s$.  
Thus the shift operators $A_k$ should decompose accordingly as
$$
A_k(\bold s,\bold x)=A_k^0(\bold s,\bold x)\oplus \xi_k(\bold s)(\bold 1\otimes \bold s^T),
$$
where $\xi_k$ are scalar rational functions and 
$$
A_k^0(\bold s,\bold x) \bold 1=0,\ A_k^0(\bold s,\bold x)(V_{\bold s})\subset V_{\bold s+\bold e_k}.
$$
Then $A_k^0$ is uniquely determined up to scaling by a rational function of $\bold s$. 

To compute $A_k^0$, note that by definition $A_k^0(\bold s,\bold x)$
expresses flat sections of $\nabla(\bold s+\bold e_k)$ via flat sections 
of $\nabla(\bold s)$. Now, flat sections of $\nabla(\bold s)$ are solutions of \eqref{kze}, so they have the form 
$$
I_i({\bold x},{\bold s})  =\int  \Phi(t,\bold s,\bold x)  \frac{dt}{t-x_i},
$$
where 
$$
\Phi(t,{\bold s},{\bold x}):=\prod_{j=1}^r(t-x_j)^{-s_j}
$$
is the master function, and integration is over a suitable twisted cycle (see e.g. \cite{EFK}, 4.3). Thus for $i\ne k$, we get by a direct computation
$$
I_i(\bold s+\bold e_k,\bold x) =
 \int\Phi(t,\bold s,\bold x)\frac1{(t-x_k)(t-x_i)}dt =
\frac{I_i(\bold s,\bold x)-I_k(\bold s,\bold x)}{x_i-x_k}.
$$
and
$$
I_k(\bold s+\bold e_k,\bold x) 
=
\int\Phi(t,\bold s,\bold x)\frac1{(t-x_k)^2}dt =
  -\sum_{i\ne k} \frac{s_i}{s_k+1} \frac{I_i(\bold s,\bold x)  - I_k(\bold s,\bold x)}{x_i-x_k}.
$$
This yields the following formula for $A_k^0$: 
$$
A_k^0(\bold s,\bold x)=\sum_{i\ne k}\frac{(s_k+1)(E_{ii}-E_{ik})+s_i(E_{kk}-E_{ki})}{x_i-x_k},
$$
where $E_{ij}$ are the elementary matrices. 
\end{example} 

\begin{remark}\label{contmat} The discussion of this subsection applies mutatis mutandis 
to the setting of \ref{kz2}. 
Moreover, one may consider the case of a generic complex matrix $A$ and 
Lie algebra $\widetilde \g(A)$ without Serre relations. In this case the entries $a_{ij}$ of $A$ multiplied by $\hbar$ should become parameters of the pencil (which can be done since the KZ connection is linear in $a_{ij}$). 
\end{remark} 

\begin{remark} The integral formulas for solutions of KZ equations given in \cite{SV},\cite{V} imply that the pencils in \ref{kz1}, \ref{kz2},\ref{kz3} are actually motivic (not just quasi-motivic). 
\end{remark}

\subsubsection{KZ  connections in the Deligne category} 
For $t\in \Bbb C$ let ${\rm Rep}(GL_t)$ be the (abelian) Deligne category (\cite{EGNO}, Section 9.12),
and let $V=[1,0]$ be the generating object (the ``vector representation").
Consider the space ${\rm Hom}(\bold 1,V^{\otimes m}\otimes V^{*\otimes m})={\rm End}(V^{\otimes m})$. By the Schur-Weyl duality, this space is naturally identified with $\Bbb CS_m$. 
The Casimir operator $\Omega$ acts on $V\otimes V$ and $V^*\otimes V^*$ by the flip
and on $V\otimes V^*$ by $-{\rm coev}\circ {\rm ev}$, where 
${\rm ev}: V\otimes V^*\to \bold 1$, ${\rm coev}: \bold 1\to V\otimes V^*$ 
are the evaluation and coevaluation maps. Thus the operators 
$\Omega^{ij}$, $1\le i<j\le 2m$ on ${\rm Hom}(\bold 1,V^{\otimes m}\otimes V^{*\otimes m})=\Bbb C S_m$ 
act as follows: for $\sigma\in S_m$, 
$$
\Omega^{ij}\sigma=\begin{cases}(i,j)\circ \sigma\text{ if } i,j\le m;\\ \sigma\circ (i-m,j-m)\text{ if } i,j>m;\\ -t\sigma \text{ if } \sigma(j-m)=i,\ i\le m<j;\\ -(i,\sigma(j-m))\circ \sigma=-\sigma\circ (\sigma^{-1}(i),j-m)\text{ if } \sigma(j-m)\ne i,\ i\le m<j,\end{cases}
$$
where $(i,j)$ is the transposition of $i$ and $j$. In other words, 
$\Omega^{ij}=\Omega^{ij}_0+t\Omega^{ij}_1$, where 
$\Omega^{ij}_0,\Omega^{ij}_1$ are independent on $t$. 

We may now set $s_0=\hbar$, $s_1=\hbar t$ and 
consider the pencil of flat connections $\nabla_{KZ,m}(\bold s)$ with fiber $\Bbb C S_m$ given by \eqref{kzconn}. Note that if $t=n\ge m$ is a positive integer then this connection 
coincides with the usual KZ  connection for 
$\mathfrak{gl}_n$ on $(V^{\otimes m}\otimes V^{*\otimes m})^{\mathfrak{gl}_n}$.
Thus, repeating the arguments of \cite{Dr} in the Deligne category (which can be done since they are based on diagrammatic tensor calculus), we can interpolate the Drinfeld-Kohno theorem to non-integer $t$. This implies that the monodromy of $\nabla_{KZ,m}$ is given by the braiding operators 
of the tensor category ${\rm Rep}_q(GL_t)$, also called the oriented skein category (see e.g. \cite{B}). These operators are rational functions in $q:=e^{\pi is_0}$ and $a:=q^t=e^{\pi is_1}$. 

\subsubsection{KZ connections for Lie superalgebras} 

The results of \ref{kz1} can be generalized to the case when $\g$ 
is a Lie superalgebra of type A-G, and $V_1,...,V_r$ are finite dimensional $\g$-modules. 
In this case the relevant version of the Drinfeld-Kohno theorem is proved in \cite{G}: 
again it turns out that the monodromy representation depends on $q:=e^{\pi i \hbar}$.  

This can be generalized in several directions: 

1. $V_1,...,V_r$ can be taken from category $\mathcal O$ for some Borel subalgebra 
of $\g$ (with rational weights), as in \ref{kz1}. Note that for Lie superalgebras there are several types of Borel subalgebras up to conjugation. 

2. The setup can be generalized to Kac-Moody Lie superalgebras, as in \ref{kz2} (for basic results on their structure and representation theory we refer the reader to \cite{S}).  

3. $V_1,...,V_r$ can be taken to be Verma modules with complex highest weights for some Borel subalgebra, and their highest weights multiplied by $\hbar$ can be made parameters of a multiparameter pencil, as in \ref{kz3}. 

\begin{remark} To be more precise, the Lie superalgebras of type A-G contain a unique continuous family 
$D(2,1,\alpha)$, $\alpha\in \Bbb C$. So the above discussion applies literally when $\alpha\in \Bbb Q$. For more general $\alpha\in \Bbb C$, we should make it a parameter of the pencil, similarly to Remark \ref{contmat} (as the entries $a_{ij}$ of the Cartan matrix of $D(2,1,\alpha)$ depend linearly on $\alpha$). 
\end{remark} 

\subsubsection{Trigonomeric KZ connections} 

Let $\g$ be a simple Lie algebra as in \ref{kz1} and $\bold r\in \g\otimes \g$ a quasitriangular structure (classical $r$-matrix) such that $\bold r+\bold r^{21}=\Omega$. Such structures were classified by Belavin and Drinfeld, \cite{BD} and are labeled (up to abelian twists) by Belavin-Drinfeld triples. Given $\bold r$, we may define the affine r-matrix (with spectral parameter)
$$
\widehat{\bold r}(z):=\frac{\bold r^{21}z+\bold r}{z-1}.
$$
We may assume that the centralizer of $\bold r$ in $\g$ is a Lie subalgebra $\mathfrak h_{\bold r}\subset \mathfrak h$ (this can always be achieved by conjugation). Let $s\in \mathfrak h_{\bold r}$, let $V_1,...,V_r$ be finite dimensional $\g$-modules, and consider the connection on the trivial bundle with fiber $(V_1\otimes...\otimes V_r)[\mu]$, $\mu\in\mathfrak h_{\bold r}^*$ on $\Bbb C^r$ with first order poles at $x_i=0$ and $x_i=x_j$, given by 
\begin{equation}\label{kzconn1}
\nabla(\hbar,s):=d-\sum_{i=1}^r\left(s^{(i)}+\hbar\sum_{j\ne i}\widehat{\bold r}(x_i/x_j)\right)\frac{dx_i}{x_i}. 
\end{equation} 
This connection is flat and called {\bf the trigonometric KZ connection}; 
it is a pencil with parameters $\bold s=(\hbar,s)\in \Bbb C\oplus \mathfrak h_{\bold r}$ (i.e., $\hbar=s_0, s=(s_1,...,s_n)$). 
The monodromy of the trigonometric KZ connection is computed in \cite{EG} in terms of the Etingof-Kazhdan quantization of the quasitriangular Lie bialgebra $(\g,\bold r)$, using its relationship to the usual KZ connection. This quantization is described explicitly in \cite{ESS}, and this description implies that the monodromy of the trigonometric KZ connection 
depends on $q=e^{\pi i\hbar}$ and $q^s$. 

\begin{remark} 1. In the case when $\bold r$ is the standard classical $r$-matrix of $\g$, the trigonometric KZ connection can be easily obtained from the rational one, as explained for example in \cite{EFK}, Subsection 3.8, or \cite{MV}. In particular, the integral representations of solutions of the KZ equations from \cite{SV} give integral representations of solutions of the
trigonometric KZ equations. Hence the trigonometric KZ connection for the standard $r$-matrix is 
motivic.

2. Moreover, in this case the shift operators $A_j(\bold s)$ in the parameters $s_j$, $j\ge 1$ are the dynamical difference operators introduced in \cite{TV} and studied further in \cite{EV} (the lattice part of the dynamical Weyl group for the affine Lie algebra $\widehat \g$). However, the shift operator $A_0(\bold s)$ in $s_0=\hbar$ is more complicated, and its explicit expression is not known in general. 
\end{remark}

As before, this can be generalized to category $\mathcal O$ modules and to Lie superalgebras. 

Another generalization arises if the trigonometric $r$-matrix $\widehat {\bold r}(z)$ 
corresponding to ${\bold r}$ is replaced by an arbitrary 
trigonometric r-matrix $\widehat{\bold r}(z)$ from \cite{BD1} (i.e., corresponding to a Belavin-Drinfeld triple for a possibly twisted affine Lie algebra). This defines a pencil of flat connections \eqref{kzconn1} (where $s\in \mathfrak h$ preserves $\widehat{\bold r}$)  whose monodromy can be computed using the trace interpretation of flat sections given in \cite{ES}, and shown to depend on $q=e^{\pi i\hbar}$ and $q^s$.

\subsubsection{Elliptic KZ connections} Let $\tau\in \Bbb C_+$ and 
$E=\Bbb C^\times/e^{2\pi i\tau\Bbb Z}$
be an elliptic curve. Let $\g=\mathfrak{sl}_n$ and $\bold r(z,\tau)$ 
be the Belavin elliptic $r$-matrix associated to some 
integer $1\le k\le n-1$ coprime to $n$ (\cite{BD1}); it has simple poles at points of order $n$ on $E$. Let $V_1,...,V_r$ be finite dimensional representations of 
$\mathfrak{sl}_n$. The {\bf elliptic KZ connection}
is the connection on the trivial bundle on $E^r$ with fiber $V_1\otimes...\otimes V_r$ 
and first order poles at $x_i=\zeta x_j$, where $\zeta\in E$, $\zeta^n=1$, given by the formula 
$$
\nabla(\hbar)=d-\hbar\sum_{i=1}^r\sum_{j\ne i}{\bold r}(x_i/x_j,\tau)\frac{dx_i}{x_i}. 
$$
This is a pencil of flat connections on its regular locus $X\subset E^r$. 
The monodromy of this pencil was computed in \cite{E} and shown 
to depend on $q=e^{\pi i\hbar}$. 

\subsection{Dunkl connections} 

\subsubsection{Dunkl connections} \label{ratdun}

Let $W$ be an irreducible finite Coxeter group with reflection representation $\mathfrak h$, $S\subset W$ the set of reflections, and for any $w\in S$ let $\alpha_w\in \mathfrak h^*\setminus 0$ be such that $w\alpha_w=-\alpha_w$. Let $\mathfrak h_{\rm reg}\subset \mathfrak h$ be
the set of $\bold x\in \mathfrak h$ such that $\alpha_w(\bold x)\ne 0$ for all $w\in S$. Let $V$ be a finite dimensional representation of $W$. Consider the pencil of connections 
$$
\nabla(c)=d-c \sum_{w\in S}\frac{d \alpha_w}{\alpha_w}(w-1)
$$
on the trivial bundle with fiber $V$ over $\mathfrak h_{\rm reg}$. 
It was proved by Dunkl that $\nabla(c)$ is flat, and it is called the {\bf Dunkl connection}. 
It is not hard to show that the Dunkl connection has regular singularities and its monodromy is given by the representation
$V_q$ of the {\bf Hecke algebra} $H_q(W)$ attached to $V$, where $q=e^{2\pi ic}$
(see \cite{GGOR}). This representation is determined by $q$, or in rare cases by $q^{\frac{1}{2}}$, due to presence of Opdam's KZ twists (see e.g. \cite{BC}, 7.2). 

This can be generalized in several directions. First of all, if $W$ is of type $I_{2m}$ (even dihedral type), $B_m$, $m\ge 3$, or $F_4$, then $S$ falls into two conjugacy classes $S_1,S_2$, so we can consider the 2-parameter pencil of Dunkl connections
$$
\nabla(c_1,c_2)=d-c_1 \sum_{w\in S_1}\frac{d \alpha_w}{\alpha_w}(w-1)-c_2 \sum_{w\in S_2}\frac{d \alpha_w}{\alpha_w}(w-1).
$$
The above discussion extends verbatim to this case. 

Secondly, we can generalize this story to the case when $W$ is a complex reflection group. 
In this case we should fix a conjugation-invariant function $c: S\to \Bbb C$, and 
the Dunkl connection (introduced in this generality in \cite{BMR,DO}) has the form 
$$
\nabla_c=d-\sum_{w\in S}c(w)\frac{d \alpha_w}{\alpha_w}(w-1).
$$
Let $\overline S$ be the set reflection hyperplanes, 
and pick representatives $E_1,...,E_m$ of $W$-orbits in $\overline S$.
For each $1\le k\le m$, the stabilizer of a generic point in $E_k$ is 
$\Bbb Z/\ell_k$, generated by an element $w_k$ which has nontrivial eigenvalue 
$e^{\frac{2\pi i}{\ell_k}}$ on $\mathfrak h$. Let $c_{jk}:=c(w_k^j)$ 
where $1\le j\le \ell_k-1$. Then $c=(c_{jk})$. Let us make the following linear 
transformation of these coordinates: 
$$
s_{lk}:=\sum_{j=1}^{\ell_k-1}e^{\frac{2\pi ijl}{\ell_k}}c_{jk},\ 0\le l\le \ell_k-1.
$$
Thus for each $k$ we have $\sum_{l=0}^{\ell_k-1}s_{lk}=0$. 
Let $\bold s=(s_{lk})$ (subject to these conditions) and $\nabla(\bold s):=\nabla_c$. 
It is shown in \cite{GGOR} that the monodromy of $\nabla(\bold s)$ factors through the 
{\bf cyclotomic Hecke algebra} $H_{\bold q}(W)$ where $\bold q=e^{2\pi i\bold s}$, and it is now known that, as originally conjectured in \cite{BMR}, this algebra is a flat deformation of the group algebra $\Bbb CW$, i.e. has dimension $|W|$ (see \cite{E1} and references therein). 

\subsubsection{Thrigonometric Dunkl (Dunkl-Cherednik) connections}\label{trigdun}

Now let $W$ be the Weyl group of an irreducible root system $R\subset \mathfrak h^*$ of rank $r$ with a polarization $R=R_+\cup R_-$. For $\alpha\in R_+$ let $s_\alpha\in W$ be the reflection corresponding to $\alpha$. Let $Q\subset \mathfrak h^*$ be the root lattice of $R$ spanned by $R$ over $\Bbb Z$ and $P^\vee\subset \mathfrak h$ be the dual lattice of coweights. Let $H=\mathfrak h/2\pi i P^\vee$ be the maximal torus of the corresponding adjoint simple complex Lie group $G$.  For $\alpha\in Q$ denote by $e^\alpha$ the character of $H$ corresponding to $\alpha$. Let $H_{\rm reg}$ be the set of $g\in H$ such that $e^{\alpha}(g)\ne 1$ for any $\alpha\in R_+$.

Following Drinfeld and Lusztig, define the {\bf degenerate affine Hecke algebra} 
$\mathcal H(W)$ to be the algebra generated by $W$ and ${\rm Sym}\mathfrak h$ 
with defining commutation relations
$$
s_\alpha h-s_{\alpha}(h)s_\alpha=\alpha(h),\ h\in \mathfrak h
$$
for simple roots $\alpha$. 
It is well known that the multiplication map $\Bbb CW\otimes  {\rm Sym}\mathfrak h\to \mathcal H(W)$ in either order is an isomorphism of vector spaces (the PBW theorem for $\mathcal H(W)$, see e.g. \cite{L}). 
Let $V$ be a finite dimensional representation of $\mathcal H(W)$.  
Consider the pencil of connections on the trivial bundle over $H_{\rm reg}$ with fiber $V$ given by 
$$
\nabla(c)=d-c\left(\sum_{i=1}^r \omega_i^\vee d\alpha_i+\sum_{\alpha\in R_+}\frac{e^{\alpha}d\alpha}{1-e^{\alpha}}(s_\alpha-1)\right), 
$$
where $\alpha_i$ are the simple roots and $\omega_i^\vee$ the fundamental coweights 
(here $s_\alpha$ and $\omega_i^\vee$ act on $V$ as elements of $\mathcal H(W)$). In other words, 
using $x_i:=e^{\alpha_i}$ as coordinates on $H$, we obtain the following formula for the covariant derivatives:
$$
\nabla_i(c)=\partial_i-\frac{c}{x_i}\left(\omega_i^\vee+\sum_{\alpha\in R_+}\alpha(\omega_i^\vee)\frac{\prod_{j=1}^r x_j^{\alpha(\omega_j^\vee)}}{1-\prod_{j=1}^r x_j^{\alpha(\omega_j^\vee)}}(s_\alpha-1)\right).
$$
This connection is flat, has regular singularities, 
and is called the {\bf Dunkl-Cherednik connection} (also known as trigonometric Dunkl connection, or the affine KZ connection for $R$); it was introduced in \cite{Ch}. Cherednik showed that 
the monodromy of this connection is given by the representation $V_q$ of the {\bf affine Hecke algebra} $\mathcal H_q(W)$ corresponding to $V$ via Lusztig's map, \cite{L}, where $q=e^{2\pi ic}$. 

This can be upgraded to a multiparameter version. To this end, define the family 
of {\bf induced representations} $V_\lambda$ of $\mathcal H(W)$ for a weight $\lambda$. Namely, given $\lambda\in \mathfrak h^*$, let $V_\lambda:=\mathcal H(W)\otimes_{{\rm Sym}\mathfrak h}\Bbb C_\lambda$, where $\mathfrak h$ acts in $\Bbb C_\lambda$ by $h\mapsto \lambda(h)$. The PBW theorem for $\mathcal H(W)$ yields an isomorphism 
$V_\lambda\cong \Bbb CW$ as a $\Bbb CW$-module. Moreover, the commutation relations 
of $\mathcal H(W)$ imply that for any $h\in \mathfrak h$, the action of $h$ in $V_\lambda$ 
is given by 
$$
h\circ w=\lambda(w^{-1}h)w+\xi_w(h) 
$$
where $\xi_w(h)$ comprises the lower length terms independent of $\lambda$. 

Thus the pencil $\nabla(c)$ on $V_\lambda$ may be regarded as a multiprameter pencil 
$\nabla(\bold s)$ with parameter $\bold s:=(c,c\lambda)$. 
Cherednik's monodromy theorem implies that the monodromy 
of $\nabla(\bold s)$ depends of $e^{2\pi i\bold s}$. 

As in the rational case, this can be generalized to 
unequal parameters for different $W$-orbits of roots. 
Let us present this generalization in the multiparameter form. 
Namely, let $R^l$ be the $W$-orbits of $R$, and $c_l$ 
the corresponding parameters. In the case of several parameters the degenerate affine Hecke algebra $\mathcal H_{\bold c}(W)$ depends on $\bold c$; namely it is the algebra generated by $W$ and ${\rm Sym}\mathfrak h$ with defining commutation relations
$$
s_\alpha h-s_{\alpha}(h)s_\alpha=c_l\alpha(h),\ h\in \mathfrak h
$$
for simple roots $\alpha\in R_l$. Let $V_{\bold c,\lambda}$ be the 
induced representation $\mathcal H_{\bold c}\otimes_{{\rm Sym}\mathfrak h}\Bbb C_\lambda$.
Note that for any $t\ne 0$ we have a natural isomorphism $\mathcal H_{\bold c}(W)\cong \mathcal H_{t\bold c}(W)$ which preserves $w\in W$ and maps $h\in \mathfrak h$ to $th$, and that 
under this isomorphism $V_{\bold c,\lambda}$ maps to $V_{t\bold c,t\lambda}$. 
Thus in the above single-parameter case for $c\ne 0$, the representation $V_{c,\lambda}$ 
of $\mathcal H_c(W)$ corresponds to the representation $V_{\lambda/c}$ of $\mathcal H(W)$. 
As before, $V_{\bold c,\lambda}$ is naturally identified with $\Bbb CW$ as a 
$\Bbb CW$-module with 
\begin{equation}\label{lin}
h\circ w=\lambda(w^{-1}h)w+\sum_l c_l\xi_{l,w}(h), 
\end{equation} 
where $\xi_{l,w}:\mathfrak h\to \Bbb CW$ is a linear map (landing in the span of elements 
of length $<\ell(w)$). 
Then the Dunkl-Cherednik connection has the form
$$
\nabla(\bold c)=d-\sum_{i=1}^r \omega_i^\vee d\alpha_i-\sum_l c_l\sum_{\alpha\in R^l_+}\frac{e^{\alpha}d\alpha}{1-e^{\alpha}}(s_\alpha-1). 
$$
As before, $\nabla(\bold c)$ is flat and has regular singularities, and by Cherednik's theorem 
its monodromy on $V_{\bold c,\lambda}$ depends on $e^{2\pi i \bold c}$ and $e^{2\pi i\lambda}$. 
Also in view of \eqref{lin}, $\nabla(\bold c)|_{V_{\bold c,\lambda}}$ is a pencil $\nabla(\bold s)$ with parameter $\bold s=(\bold c,\lambda)$. 

\begin{remark} In type $A$ ($W=S_n$) the (trigonometric) Dunkl connections are 
special cases of the (trigonometric) KZ connections for $\g=\mathfrak{sl}_n$.
Namely, we should take $V_1,...,V_n$ to be the vector representation $\Bbb C^n$ 
and consider the connection on $(V_1\otimes...\otimes V_n)[0]$. 
\end{remark} 

 \subsection{Casimir connections} 
 
 \subsubsection{Rational Casimir connections} \label{casconn} Consider the setting of \ref{kz1}. Let $\mathfrak h_{\rm reg}$ be the regular part of the Cartan subalgebra $\mathfrak h\subset \g$ as in \ref{ratdun}.
 Let $V$ be a finite dimensional $\g$-module. The {\bf Casimir connection} (also called the {\bf dynamical connection}) is the connection on the trivial bundle on $\mathfrak h_{\rm reg}$ with fiber $V$ given by 
$$
\nabla(\hbar)=d-\hbar\sum_{\alpha\in R_+}\frac{e_{-\alpha} e_\alpha+e_{-\alpha}e_\alpha}{2}\frac{d\alpha}{\alpha},
$$
where $e_\alpha$ are the root elements of $\g$ defined in \ref{kz1}. 
This connection is flat and has regular singularities. It was first defined by De Concini around 1995 (unpublished), and then  
was studied in the papers \cite{FMTV}, \cite{TL}, \cite{MTL} and many subsequent ones. In particular, Toledano-Laredo proved (\cite{TL1},\cite{TL2}) that the monodromy of 
$\nabla(\hbar)$ is given by the quantum Weyl group action on the representation 
$V_q$ of the quantum group $U_q(\g)$ corresponding to $V$, where $q=e^{\pi i\hbar}$ (as had been conjectured independently by Toledano-Laredo and C. De Concini and proved for $\g=\mathfrak{sl}_n$ in \cite{TL}). Since $\nabla$ is $\mathfrak h$-invariant, the same is clearly true on every weight subspace $V[\mu]$.  
 
This can be generalized in various ways. First of all, one can take $V$ 
to be a module from category $\mathcal O$ with rational weights and restrict 
$\nabla(\hbar)$ to a weight subspace $V[\mu]$. The monodromy of $\nabla$ is 
then still given by the (pure) quantum Weyl group (\cite{ATL}).

Also we can consider the multiparameter version. To this end take 
$V=M(\lambda)$, the Verma module over $\g$ with highest weight $\lambda$, 
and take $\mu=\lambda-\beta$ for fixed $\beta\in Q_+$. Then 
$V[\mu]\cong U(\mathfrak n_-)[-\beta]$, and in this realization 
the operators $e_{-\alpha} e_\alpha$ depend linearly (inhomogeneously) on $\lambda$. 
Thus $\nabla(\hbar)|_{M(\lambda)[\lambda-\beta]}$ can be viewed as a multiparameter pencil 
with poarameter $\bold s=(\hbar,\hbar \lambda)$. By \cite{ATL}, the monodromy of $\nabla(\bold s)$ 
is given by the pure quantum Weyl group, so depends on $e^{2\pi i\bold s}$.

Finally, this whole story extends to the case when $\g$ is a symmetrizable Kac-Moody algebra, since this is the generality of the monodromy result of \cite{ATL}. 

\subsubsection{Trigonometric Casimir connections} \label{trigcas}

The trigonometric version of the Casimir connection was introduced in \cite{TL3}. 
To define it, keep the setting of the previous subsection and the notation of \ref{trigdun}. 
Let $Y(\g)$ be Drinfeld's {\bf Yangian} 
of $\g$  in which the Planck constant $\hbar$ is set to $1$, and let $J: \g\to Y(\g)$, be the $\g$-invariant map defining the Drinfeld generators of $Y(\g)$ (\cite{TL3}, Section 3). Let $V$ be a finite dimensional irreducible representation of $Y(\g)$ on which $J(\omega_i^\vee)$ acts with rational eigenvalues (i.e., its Drinfeld polynomial has rational roots). 
The {\bf trigonometric Casimir connection} is the connection on the trivial bundle on $H_{\rm reg}$ with fiber $V$ given by 
$$
\nabla(\hbar) = d-\hbar\left(\sum_{\alpha\in R_+}\frac{1}{4}\frac{e^\alpha+1}{e^\alpha-1}(e_\alpha e_{-\alpha}+e_{-\alpha} e_\alpha)d\alpha -\sum_{i=1}^{{\rm rank}(\g)}J(\omega_i^\vee)d\alpha_i\right).
$$
It is shown in \cite{TL3} that this connection is flat, and it clearly has regular singularities. Note that $\nabla$ falls into a direct sum of connections on $\mathfrak h$-weight spaces of $V$. Also $\nabla$ is a pencil, and it arises as the equivariant quantum connection for quiver varieties of finite type (\cite{MO}), hence is periodic and quasi-motivic.  

Also for $\g=\mathfrak{sl}_2$ it is shown in \cite{GTL} that the monodromy of $\nabla(\hbar)$ is given by the action of the quantum Weyl group on the corresponding representation $V_q$ of the quantum affine algebra $U_q(\widehat \g)$, and the proof for general $\g$ 
is forthcoming in a joint work of Gautam and Toledano-Laredo. 
This also will imply the quasi-motivic and periodic properties of $\nabla(\hbar)$.  

There is also a multiparameter version of this pencil. To describe it, denote by $V(z)$ 
the shift of the representation $V$ by $z\in \Bbb C$, which is $V$ as a $\g$-module with 
$$
J(a)|_{V(z)}=(J(a)+za)|_V.
$$ 
Let $V_1,...,V_n$ be irreducible finite dimensional $Y(\g)$-modules whose Drinfeld polynomials have rational roots, and let $V:=V_1(z_1)\otimes...\otimes V_n(z_n)$ (tensor product using the Hopf structure on $Y(\g)$). 
Recall that 
$$
\Delta(J(h))=J(h)\otimes 1+1\otimes J(h)+\frac{1}{2}[h\otimes 1,\Omega].
$$
Thus $V=V_1\otimes...\otimes V_n$ as a $\g$-module, and 
$$
J(h)|_V=\sum_{j=1}^n z_j(J(h)|_{V_j})^{(j)}+J_0(h)|_{V_1\otimes...\otimes V_n},
$$
where $J_0(h)\in U(\g)^{\otimes n}$ does not depend on $z_j$. Thus the pencil $\nabla(\hbar)|_V$ 
can be viewed as a multiparameter pencil $\nabla(\bold s)$ with parameter $\bold s=(\hbar,\hbar \bold z)$, where $\bold z:=(z_1,...,z_n)$. This pencil arises as the equivariant quantum connection for quiver varieties of finite type (\cite{MO}), so it is quasi-motivic and periodic. 
This will also follow from the forthcoming results of Gautam and Toledano-Laredo on the monodromy of $\nabla(\bold s)$. 

\begin{remark} 1. If $\g=\mathfrak{sl}_n$ then the (trigonometric) Casimir connection 
is equivalent to the (trigonometric) KZ connection for $\mathfrak{gl}_m$, via the Howe duality 
between $\mathfrak{gl}_n$ and $\mathfrak{gl}_m$, see \cite{GTL}, \cite{TL}. 
As explained in these papers, this duality can be used to compute the monodromy 
of the trigonometric Casimir connection in type $A$. 

2. Let $V=\g$ be the adjoint representation of $\g$. Then the Casimir connection on $V[0]$ 
coincides with the Dunkl connection for the Weyl group $W$ of $\g$ in its reflection representation
$\mathfrak h$. Similarly, in the trigonometric case consider the adjoint representation 
$V$ of the Yangian $Y(\g)$ (evaluated at $0$), which is $\g$ for type $A$ and $\g\oplus \Bbb C$ otherwise as a $\g$-module. Then the trigonometric Casimir connection in $V[0]$ coincides with the trigonometric Dunkl connection 
on the reflection representation of the degenerate affine Hecke algebra $\mathcal H(W)$, which is 
$\mathfrak h$ in type $A$ and $\mathfrak h\oplus \Bbb C$ otherwise. 
\end{remark} 

\subsection{Irreducibility} In many of the cases described above, the 
connection $\nabla(\bold s)$ is irreducible for generic $\bold s$, so that 
Theorem \ref{badval}(ii) applies. For example, it follows from Theorem 1.2 in \cite{MTL} and a result of the first author (included in \cite{MTL} as Theorem 1.8) that the Casimir connection $\nabla(\hbar,\hbar\mu)$ on the weight subspace $M(\mu)[\mu-\beta]$ in the Verma module $M(\mu)$ is irreducible for Zariski generic $(\hbar,\mu)$. Another example where the connection is generically irreducible is the Dunkl connection on an irreducible 
$W$-module $V$ (this follows easily by the same argument as in the proof of \cite{MTL}, Theorem 1.2).

\subsection{Unitarity and generic semisimplicity} 

In many of the above examples, the monodromy representation of the connection is unitary, 
hence semisimple for small real values of $\bold s$. Thus such pencils are generically semisimple. 

For example, consider the KZ connection \eqref{kzconn} in ${\rm Hom}_\g(V_0,V_1\otimes...\otimes V_r)$ 
where $V_i$ are finite dimensional representations of $\g$. 
As noted above, the Drinfeld-Kohno theorem identifies 
the monodromy of the KZ connection with the
representation of the pure braid group $PB_r$
by $R$-matrices of the quantum 
group $U_q(\g)$, where 
$q=e^{\pi i\hbar}$. If $\hbar$ is real (i.e., $|q|=1$), this quantum group 
has a real structure, and its category of representations is a Hermitian 
tensor category (when $q$ is not a root of $1$). This means that 
multiplicity spaces of tensor products of representations 
of $U_q(\g)$ carry a nondegenerate Hermitian inner product. Moreover, if $\hbar=0$, this is the usual 
positive inner product on multiplicity spaces for the corresponding compact Lie group. 
Thus by deformation argument, when the representations $V_i$ 
are fixed and $\hbar\in \Bbb R$ is small, then the monodromy representation 
of $PB_r$ on ${\rm Hom}_\g(V_0,V_1\otimes...\otimes V_r)$ is unitary. The same argument can be used for the multiparameter KZ connection 
considered in \ref{kz3}. 

Similarly, as noted above, the monodromy of the Casimir connection discussed in \ref{casconn} is given by the quantum Weyl group operators of $U_q(\g)$ on the $q$-deformation $V_q$ of a finite-dimensional $\g$-module $V$. This module carries a  contravariant Hermitian form which is preserved by this action and is positive definite if $|q|=1$ and $q$ is close to $1$. Thus in this case the monodromy representation is also unitary. 

Finally, the monodromy of the rational Dunkl connection discussed in \ref{ratdun} is given by representations of the Hecke algebra $H_q(W)$, where $q=e^{2\pi ic}$, which are unitary for small real $c$. 

Using this approach, one can establish generic semisimplicity for many other examples considered above. We omit the details.

\end{document}